\renewcommand{\subjclass}[1]{\thanks{\emph{2000 Mathematics Subject Classification:}~#1}}
\renewcommand{\keywords}[1]{\thanks{\emph{Keywords and Phrases:}~#1}}
\theoremstyle{plain}
\newtheorem{theorem}{Theorem}[section]
\newtheorem{lemma}{Lemma}[section]
\newtheorem{corollary}[theorem]{Corollary}
\newtheorem{proposition}[lemma]{Proposition}
\def\teto#1{\setbox\z@\hbox{${#1\vphantom k}$}\hbox{%
 \hbox{\lower2\ex@\hbox{\lower\dp\z@\hbox{\vbox{\hrule
 \hbox{\vrule\hskip2\ex@\vbox{\vskip2\ex@\box\z@\vskip1\ex@}%
 \hskip2\ex@\vrule}}}}}}}
\newcommand{\Q}{\mathbb{Q}}
\newcommand{\Z}{\mathbb{Z}}
\newcommand{\y}{\textbf{y}}
\newcommand{\OO}{\mathcal{O}}
\newcommand{\PPP}{\mathcal{P}}
\newcommand{\ve}{\varepsilon}
\newcommand{\al}{\alpha}
\newcommand{\be}{\beta}
\newcommand{\fp}{\mathfrak{p}}
 \DeclareMathOperator{\ord}{ord}
\DeclareMathOperator{\id}{id}
\newcommand{\kdots}{,\ldots ,}
\renewcommand{\eqref}[1]{(\ref{#1})}
\newcommand{\Qq}{\mathbb{Q}}
\newcommand{\Zz}{\mathbb{Z}}
\newcommand{\GL}{{\rm GL}}
\newcommand{\half}{\textstyle{\frac{1}{2}}}
\newcommand{\quarter}{\textstyle{\frac{1}{4}}}
\title[Multiply monogenic orders]
{Multiply monogenic orders}
\subjclass{Primary 11R99; Secondary: 11D99, 11J99}
\keywords{monogenic orders, power integral bases, canonical number systems}
\author[A. B\'erczes]{Attila B\'erczes}
\thanks{The research was supported in part by grants T67580
and T75566 (A.B., K.G.) of the Hungarian National Foundation for
Scientific Research, and the J\'anos Bolyai Research Scholarship
(A.B.). The work is supported by the T\'AMOP
4.2.1./B-09/1/KONV-2010-0007 project. The project is implemented
through the New Hungary Development Plan, co-financed by the
European Social Fund and the European Regional Development Fund.
(A.B.)}
\address{A. B\'erczes \newline
         \indent Institute of Mathematics, University of Debrecen
     \newline
         \indent Number Theory Research Group,
     Hungarian Academy of Sciences and
     \newline
         \indent University of Debrecen
     \newline
         \indent H-4010 Debrecen, P.O. Box 12, Hungary}
\email{berczesa\char'100math.klte.hu}
\author[J.-H. Evertse]{Jan-Hendrik Evertse}
\address{J.-H. Evertse \newline
         \indent Universiteit Leiden, Mathematisch Instituut, \newline
         \indent Postbus 9512, 2300 RA Leiden, The Netherlands}
\email{evertse\char'100math.leidenuniv.nl}
\author[K. Gy\H{o}ry]{K\'{a}lm\'{a}n Gy\H{o}ry}
\address{K. Gy\H{o}ry \newline
         \indent Institute of Mathematics, University of Debrecen \newline
         \indent Number Theory Research Group, Hungarian Academy of Sciences and \newline
         \indent University of Debrecen \newline
         \indent H-4010 Debrecen, P.O. Box 12, Hungary}
\email{gyory\char'100math.klte.hu}
\begin{document}

\maketitle

\begin{abstract}
Let $A=\Zz [x_1\kdots x_r]\supset\Zz$ be a domain which is finitely generated
over $\Zz$ and integrally closed in its quotient field $L$.
Further, let $K$ be a finite extension field of $L$.
An $A$-order in $K$ is a domain $\OO\supset A$ with quotient field
$K$ which is integral over $A$. $A$-orders in $K$  of the type $A [\alpha ]$
are called monogenic.
It was proved by Gy\H{o}ry \cite{Gy17}
that for any given $A$-order $\OO$ in $K$
there are at most finitely many $A$-equivalence classes of $\alpha\in\OO$ with
$A[\alpha ]=\OO$,
where two elements $\alpha ,\beta$ of $\OO$ are
called $A$-equivalent if $\beta =u\alpha +a$ for some $u\in A^*$, $a\in A$.
If the number of $A$-equivalence classes of $\alpha$ with $A[\alpha ]=\OO$
is at least $k$, we call $\OO$ $k$ times monogenic.

In this paper we study orders which are more than one time
monogenic. Our first main result is that if $K$ is any finite
extension of $L$ of degree $\geq 3$, then there are only finitely
many three times monogenic $A$-orders in $K$. Next, we define two
special types of two times monogenic $A$-orders, and show that
there are extensions $K$ which have infinitely many orders of
these types. Then under certain conditions imposed on the Galois
group of the normal closure of $K$ over $L$, we prove that $K$ has
only finitely many two times monogenic $A$-orders which are not of
these types. Some immediate applications to canonical number
systems are also mentioned.
\end{abstract}

\section{Introduction}\label{1}

\setcounter{equation}{0}

In this introduction we present our results in the special case $A=\Zz$.
Our general results
over arbitrary finitely generated domains $A$ are stated in the next section.

Let $K$ be an algebraic number field of degree $d \geq 2$ with
ring of integers $\OO_K$. The number field $K$ is called {\it monogenic}
if $\OO_K = \Z[\al]$ for some $\al \in \OO_K$. This is equivalent
to the fact that $\{1,\al, \dots, \al^{d-1}\}$ forms a $\Zz$-module basis for $\OO_K$.
The existence of such a basis, called {\it power integral basis},
considerably facilitates the calculations in $\OO_K$ and the study of arithmetical
properties of $\OO_K$.

The quadratic and cyclotomic number fields are monogenic, but this is not
the case in general. Dedekind \cite{Dedekind1} gave the first example for
a non-monogenic number field.

More generally, an order $\OO$ in $K$,
that is a subring of $\OO_K$ with quotient field equal to $K$,
is said to be monogenic if $\OO=\Z[\al]$
for some $\al \in \OO$. Then for $\be=\pm \al +a$ with $a \in \Z$ we also have
$\OO=\Z[\be]$. Such elements $\al,\be$ of $\OO$ are called $\Z$-{\it equivalent}.

In this paper, we deal with the ``Diophantine equation"
\begin{equation}\label{1.1}
\Z [\al ]= \OO\ \ \mbox{in } \al\in\OO
\end{equation}
where $\OO$ is a given order in $K$. As has been explained above, the solutions of
\eqref{1.1} can be divided into $\Z$-equivalence classes.
It was proved by Gy\H ory \cite{Gy16}, \cite{Gy1}, \cite{Gy19}
that there are only finitely many
$\Z$-equivalence classes of $\al
\in \OO$ with \eqref{1.1}, and that a full system of
representatives for these classes can be determined effectively.
Evertse and Gy\H ory \cite{EGy1}
gave a uniform and explicit upper bound, depending only on $d=[K:\Q ]$,
for the number of $\Z$-equivalence
classes of such $\al$.
For various generalizations and effective versions, we refer to
Gy\H ory \cite{Gy18}.

In what follows, the following definition will be useful.
\\[0.2cm]
{\bf Definition.} An order $\OO$ is called \emph{$k$ times monogenic,}
if there are at least $k$ distinct $\Zz$-equivalence classes of $\alpha$ with \eqref{1.1},
in other words, if there are at least $k$ pairwise $\Zz$-inequivalent elements
$\alpha_1\kdots\alpha_k\in\OO$ such that
\[
\OO =\Zz [\alpha_1]=\cdots =\Zz [\alpha_k].
\]
Similarly, the order $\OO$ is called
\emph{precisely/at most $k$ times monogenic,}
if there are precisely/at most $k$ $\Zz$-equivalence classes of $\alpha$
with \eqref{1.1}.
\\[0.2cm]
It is not difficult to show that any order $\OO$ in a quadratic number field
is precisely one time monogenic, i.e., there exist $\alpha\in\OO$ with
\eqref{1.1}, and these $\alpha$
are all $\Zz$-equivalent to one another.

Our first result is as follows.

\begin{theorem}\label{T1.1}
Let $K$ be a number field of degree $\geq 3$.
Then there are at most finitely
many three times monogenic orders in $K$.
\end{theorem}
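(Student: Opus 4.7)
The plan is to reduce the problem to an $S$-unit equation in the normal closure $G$ of $K/\Q$ and then invoke the classical finiteness theorem for such equations. The crucial preliminary observation is this: if $\al,\be\in\OO_K$ satisfy $\Z[\al]=\Z[\be]=\OO$, then for every pair of $\Q$-embeddings $\sigma_i,\sigma_j\colon K\hookrightarrow G$ the quotient
\[
\eta_{ij}(\be,\al):=\frac{\sigma_i(\be)-\sigma_j(\be)}{\sigma_i(\al)-\sigma_j(\al)}
\]
is a unit in $\OO_G$. Indeed, writing $\be=P(\al)$ with $P\in\Z[X]$ and using the divisibility $X-Y\mid P(X)-P(Y)$ shows $\eta_{ij}\in\OO_G$; the equality $\operatorname{disc}(\al)=\operatorname{disc}(\be)=\operatorname{disc}(\OO)$ then gives $\prod_{i<j}\eta_{ij}^{2}=1$, so each factor is necessarily a unit.

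Now suppose $\OO$ is three times monogenic with pairwise $\Z$-inequivalent generators $\al_1,\al_2,\al_3$. Fix three embeddings of $K$ into $G$, labelled $1,2,3$ (possible since $[K:\Q]\geq 3$), and write $\de_{ij}(\al)=\sigma_i(\al)-\sigma_j(\al)$. Applying the triangle identity $\de_{12}(\al_k)+\de_{23}(\al_k)=\de_{13}(\al_k)$ to each $\al_k$ and dividing by $\de_{13}(\al_1)$, one arrives, with $\lambda:=\de_{12}(\al_1)/\de_{13}(\al_1)$ and units $a_k,b_k,c_k\in\OO_G^{\times}$ satisfying $a_kb_kc_k=\pm 1$, at the system
\[
a_k\lambda+b_k(1-\lambda)=c_k\qquad(k=1,2,3),
\]
with $(a_1,b_1,c_1)=(1,1,1)$. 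Eliminating $\lambda$ between the $k=1$ and $k=2$ (respectively $k=3$) equations gives
\[
\frac{\lambda}{1-\lambda}=\frac{1-b_2}{a_2-1}=\frac{1-b_3}{a_3-1},
\]
whence the polynomial identity $(1-b_2)(a_3-1)=(1-b_3)(a_2-1)$ among the units $a_2,b_2,a_3,b_3\in\OO_G^{\times}$.

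The subtlest step is to convert this identity --- together with the multiplicative relations $a_kb_kc_k=\pm 1$ and, when $[K:\Q]\geq 4$, the analogous identities coming from other triples of embeddings --- into a genuine two- or three-variable $S$-unit equation of the form $u+v=1$ in $\OO_G^{\times}$, whose solution set is finite by the theorem of Mahler and Evertse. Once this is achieved, the tuples $(a_k,b_k,c_k)$ and hence $\lambda$ take only finitely many values; and since $\lambda$ encodes the cross-ratio of three conjugates of $\al_1$, the finiteness results cited in Section~\ref{1} for integral elements with prescribed invariants then imply that $\al_1$, and hence also $\OO=\Z[\al_1]$, ranges over only finitely many possibilities. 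The main obstacle is precisely this reduction: the polynomial identity above, taken in isolation, admits infinitely many solutions in units, so all the structure coming from the three simultaneous generators --- and, in the delicate case $[K:\Q]=3$ where only one embedding triple is available, the full symmetry under permutation of $\al_1,\al_2,\al_3$ --- must be exploited in order to isolate a bona fide unit equation to which the Mahler--Evertse finiteness applies.
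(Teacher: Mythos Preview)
Your opening observation that the ratios $\eta_{ij}=(\sigma_i\be-\sigma_j\be)/(\sigma_i\al-\sigma_j\al)$ are units is correct and is exactly the paper's Lemma~\ref{L2a}. But from that point on the argument is only a sketch, and the two places where you wave your hands are precisely where the real work lies.

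\textbf{The reduction step.} You arrive at $(1-b_2)(a_3-1)=(1-b_3)(a_2-1)$ in units and then concede that this, by itself, has infinitely many solutions and that ``all the structure\ldots\ must be exploited''. The paper does not try to manufacture a single $u+v=1$ equation here; instead it invokes a qualitatively stronger tool than the Mahler--Evertse finiteness you cite, namely the theorem of Evertse--Gy\H{o}ry--Stewart--Tijdeman (Lemma~\ref{L2}): for all but finitely many normalized coefficient pairs $(a_1,a_2)$, the equation $a_1x+a_2y=1$ has \emph{at most two} solutions in a fixed finitely generated group. This is the hinge of the proof. Three pairwise inequivalent generators $\al_1,\al_2,\al_3$ produce three solutions of the same unit equation \eqref{5.104}, so outside a finite exceptional set two of them must coincide; the paper then runs an intricate combinatorial argument on cross-ratios $\beta_{hp}^{(ijk)}$ (the full proof of Lemma~\ref{L4}) to chase these coincidences to a contradiction. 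Your proposal gives no indication of this mechanism, and the plain finiteness of $u+v=1$ is not enough.

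\textbf{The final step.} Even if you succeeded in pinning $\lambda=\de_{12}(\al_1)/\de_{13}(\al_1)$ to finitely many values, this only controls one cross-ratio. For $d\geq 4$ a single cross-ratio does not determine the $\Q$-equivalence class of $\al_1$; for $d=3$ it does, but $\Q$-equivalence is far weaker than $\Z$-equivalence, and distinct $\Z$-equivalence classes within one $\Q$-class can give distinct orders. The paper handles this with a separate, nontrivial argument (Lemma~\ref{L5}) showing that within a fixed $\Q$-equivalence class, only finitely many $\Z$-classes of $\be$ can have $\Z[\be]$ two times monogenic. You appeal instead to unspecified ``finiteness results\ldots\ for integral elements with prescribed invariants'', but no such result covers this passage directly.

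A minor point: your claim $a_kb_kc_k=\pm 1$ holds only when $d=3$; for $d\geq 4$ the discriminant identity involves the full product $\prod_{i<j}\eta_{ij}^2$, not just the three factors with $i,j\in\{1,2,3\}$.
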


This result is a refinement of work of B\'{e}rczes \cite{Be1}.

The bound $3$ is best possible, i.e., there are number fields $K$
having infinitely many two times monogenic orders.
We believe that if $K$ is an arbitrary number field of degree $\geq 3$,
then with at most finitely many exceptions,
all two times monogenic orders in $K$ are of a special structure.
Below, we state a theorem which confirms this if we impose some restrictions
on $K$.

Let $K$ be a number field of degree at least $3$. An order $\OO$ in $K$
is called of {\bf type I} if there are $\alpha ,\beta\in\OO$
and
$\left(\begin{smallmatrix}a_1&a_2\\a_3&a_4\end{smallmatrix}\right)
\in\GL (2,\Zz )$
such that
\begin{equation}
\label{1.2}
K=\Qq (\alpha ),\ \ \OO =\Zz [\alpha ]=\Zz [\beta ],\ \
\beta =\frac{a_1\alpha +a_2}{a_3\alpha +a_4},\ \ a_3\not= 0.
\end{equation}
Notice that $\beta$ is not $\Zz$-equivalent to $\alpha$, since $a_3\not= 0$
and $K$ has degree at least $3$. So orders of type I are two times
monogenic.

Orders $\OO$ of type II exist only for number fields of degree $4$.
An order $\OO$ in a quartic number field $K$ is called of {\bf type II} if there
are $\alpha ,\beta\in \OO$ and $a_0,a_1,a_2,b_0,b_1,b_2\in\Zz$ 
with $a_0b_0\not= 0$ such that
\begin{eqnarray}\label{1.3}
&&K=\Qq (\alpha ),\ \ \OO =\Zz [\alpha ]=\Zz [\beta ],
\\
\nonumber
&&\beta =a_0\alpha^2 +a_1\alpha +a_2,\ \ \ \alpha =b_0\beta^2 +b_1\beta +b_2.
\end{eqnarray}
Orders of type II are certainly two times monogenic.
At the end of this section, we give examples of number fields 
having infinitely many orders of type I, respectively II.

Let $E$ be a field of characteristic $0$,
and $F=E(\theta )/E$ a finite field extension of degree $d$.
Denote by $\theta^{(1)}\kdots \theta^{(d)}$
the conjugates of $\theta$ over $E$, and by $G$ the normal closure
$E(\theta^{(1)}\kdots \theta^{(d)})$ of $F$ over $E$. We call $F$
\emph{$m$ times transitive} over $E$ ($m\leq d$)
if for any two ordered $m$-tuples of distinct
indices $(i_1\kdots i_m)$, $(j_1\kdots j_m)$ from $\{ 1\kdots d\}$,
there is $\sigma\in{\rm Gal}(G/E)$ such that
\[
\sigma (\theta^{(i_1)})=\theta^{(j_1)}\kdots
\sigma (\theta^{(i_m)})=\theta^{(j_m)}.
\]
If $E=\Qq$, we simply say that $F$ is $m$ times transitive.

We denote by $S_n$ the permutation group on $n$ elements.

Our result on two times monogenic orders is as follows.

\begin{theorem}\label{T1.2}
{\bf (i)}
Let $K$ be a cubic number field. Then every two times monogenic
order in $K$ is of type I.
\\[0.1cm]
{\bf (ii)} Let $K$ be a quartic number field of which the normal
closure has Galois group $S_4$.
Then there are at most finitely many two times monogenic orders in $K$
which are not of type I or of type II.
\\[0.1cm]
{\bf (iii)}
Let $K$ be a four times transitive number field of degree at least $5$.
Then there at most finitely many two times monogenic orders in $K$
which are not of type I.
\end{theorem}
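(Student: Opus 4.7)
\emph{Common setup.} In each case assume $\OO$ is two times monogenic with $\Z$-inequivalent generators $\alpha,\beta\in\OO$, set $d=[K:\Q]$, and write $\beta=f(\alpha)$, $\alpha=g(\beta)$ with $f,g\in\Z[x]$ of degree $<d$. Index the conjugates so that $\beta_i=f(\alpha_i)$; in the ring of integers of the normal closure $\widetilde K/\Q$, both $\alpha_i-\alpha_j\mid\beta_i-\beta_j$ and $\beta_i-\beta_j\mid\alpha_i-\alpha_j$ hold, so
$$\lambda_{ij}:=\frac{\beta_i-\beta_j}{\alpha_i-\alpha_j}\in\OO_{\widetilde K}^{\,*},\qquad \prod_{1\le i<j\le d}\lambda_{ij}=\pm1,$$
the latter since $\mathrm{disc}\,\alpha=\mathrm{disc}\,\beta=\mathrm{disc}\,\OO$.

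\emph{Part (i), $d=3$.} A linear $f$ would make $\alpha,\beta$ $\Z$-equivalent, so $f(x)=a_2x^2+a_1x+a_0$ with $a_2\neq0$. A direct expansion yields
$$\lambda_{ij}=a_2(\alpha_i+\alpha_j)+a_1=(a_1+a_2\mathrm{tr}(\alpha))-a_2\alpha_k\qquad(\{i,j,k\}=\{1,2,3\}),$$
exhibiting $\lambda_{ij}$ as the $k$-th conjugate of $\gamma:=a_4-a_3\alpha\in\OO$ with $a_3:=a_2\neq0$, $a_4:=a_1+a_2\mathrm{tr}(\alpha)\in\Z$; hence $\gamma\in\OO^{\,*}$. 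Computing $\gamma\beta$ and reducing modulo the minimal polynomial of $\alpha$: the $\alpha^2$-coefficient in $(a_4-a_3\alpha)(a_2\alpha^2+a_1\alpha+a_0)$ cancels identically once the value of $a_4$ is substituted, so $\gamma\beta=A\alpha+B$ with $A,B\in\Z$. A further calculation identifies the norm $N_{K/\Q}(\gamma)=\prod_{i<j}\lambda_{ij}=\pm1$ with the determinant of the matrix $\bigl(\begin{smallmatrix}A&B\\-a_3&a_4\end{smallmatrix}\bigr)$, placing $\OO$ in Type I.

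\emph{Parts (ii) and (iii), $d\ge 4$.} Here I would pass to cross-ratios of four conjugate indices,
$$\chi^\alpha_{ijkl}:=\frac{(\alpha_i-\alpha_k)(\alpha_j-\alpha_l)}{(\alpha_i-\alpha_l)(\alpha_j-\alpha_k)},\qquad \chi^\beta_{ijkl}=\frac{\lambda_{ik}\lambda_{jl}}{\lambda_{il}\lambda_{jk}}\cdot\chi^\alpha_{ijkl}.$$
The ratio $\chi^\alpha/\chi^\beta$ lies in the fixed finitely generated group $\OO_{\widetilde K}^{\,*}$. Combining the Pl\"ucker relation (which supplies $\chi+(1-\chi)=1$ after a suitable index permutation), the norm identity $\prod\lambda_{ij}=\pm1$, and finite generation of $\OO_{\widetilde K}^{\,*}$, one extracts a genuine $S$-unit equation $x+y=1$ in a multiplicative subgroup of $\widetilde K^{\,*}$ that does not depend on $\OO$. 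Evertse's theorem then restricts $(\chi^\alpha,\chi^\beta)$ to finitely many pairs. Under the Galois hypotheses one assembles the cross-ratio data into a projective transformation between the ordered tuples $(\alpha_i)$ and $(\beta_i)$: in case (iii), four-times transitivity of $\Gal(\widetilde K/\Q)$ forces a single M\"obius map in $\mathrm{PGL}_2(\oQ)$ intertwining the tuples, hence, after rationality and integrality arguments, a matrix in $\GL_2(\Z)$, yielding Type I with at most finitely many exceptions. In case (ii), the normal subgroup $V_4\trianglelefteq S_4$ allows a second possibility in which the intertwining map factors through the degree-two resolvent, which is precisely the Type II configuration.

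\emph{Main obstacle.} The delicate step is converting the data $\lambda_{ij}\in\OO_{\widetilde K}^{\,*}$ with $\prod\lambda_{ij}=\pm1$ into an $S$-unit equation whose ambient group is independent of $\OO$: individual cross-ratios $\chi^\alpha$ are $S$-units only for an $S$ that grows with $\mathrm{disc}\,\OO$, so one must work with the ratio $\chi^\alpha/\chi^\beta$ and exploit the norm relation to extract a uniform equation. A secondary subtlety, specific to case (ii), is ruling out sporadic algebraic relations between $\alpha$ and $\beta$ that preserve cross-ratios but are neither M\"obius nor resolvent-quadratic; this is where the hypothesis $\Gal(\widetilde K/\Q)=S_4$, rather than a proper transitive subgroup, is essential.
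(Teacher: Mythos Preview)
Your Part~(i) is correct and is a more explicit variant of the paper's argument. The paper simply notes that three distinct points determine a M\"obius map, writes $\beta^{(i)}=(a_1\alpha^{(i)}+a_2)/(a_3\alpha^{(i)}+a_4)$, and descends the coefficients to $\Q$ by Galois invariance; you construct the same map by hand from the quadratic expression for $\beta$. Either way works, and the determinant~$=\pm1$ claim is exactly the content of the paper's Lemma on $\GL(2,A)$.

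For Parts~(ii) and~(iii) you have correctly identified the obstacle but not overcome it, and the missing idea is specific. From the elementary identity
\[
\frac{\ve_{ik}}{\ve_{jk}}-1\;=\;\frac{\beta_i-\beta_j}{\beta_i-\beta_k}\Bigl(\frac{\ve_{ij}}{\ve_{jk}}-1\Bigr)
\qquad(\ve_{ij}:=\lambda_{ij}^{-1}),
\]
the paper takes the product over the three cyclic shifts $(i,j,k)\mapsto(i,k,l)\mapsto(i,l,j)$ of a fixed $4$-tuple. The $\beta$-cross-ratio factors multiply to $1$ and cancel, leaving
\[
\Bigl(\tfrac{\ve_{ik}}{\ve_{jk}}-1\Bigr)\Bigl(\tfrac{\ve_{il}}{\ve_{kl}}-1\Bigr)\Bigl(\tfrac{\ve_{ij}}{\ve_{jl}}-1\Bigr)
=\Bigl(\tfrac{\ve_{ij}}{\ve_{jk}}-1\Bigr)\Bigl(\tfrac{\ve_{ik}}{\ve_{kl}}-1\Bigr)\Bigl(\tfrac{\ve_{il}}{\ve_{jl}}-1\Bigr),
\]
a six-variable equation $(x_1-1)(x_2-1)(x_3-1)=(y_1-1)(y_2-1)(y_3-1)$ with all unknowns in the \emph{fixed} group $\OO_{\widetilde K}^{\,*}$. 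This is not a two-variable $x+y=1$ equation, and the norm relation $\prod\lambda_{ij}=\pm1$ plays no role; your proposal to ``exploit the norm relation to extract a uniform equation'' would not lead here. The paper then analyzes this cubic equation using Laurent's theorem on subvarieties of tori (not merely Evertse's two-variable theorem): outside a finite exceptional set, either $(y_1,y_2,y_3)$ is a permutation of $(x_1^{\pm1},x_2^{\pm1},x_3^{\pm1})$, or two variables differ by a fixed root of unity. Feeding this back through the four-transitivity hypothesis yields a clean trichotomy: the ratios $\ve_{ij}/\ve_{ik}$ lie in a fixed finite set; or all cross ratios of $\alpha$ and $\beta$ agree (Type~I); or $d=4$ and $\ve_{ij}=-\ve_{kl}$ for each permutation $(i,j,k,l)$ of $(1,2,3,4)$ (Type~II, via an explicit resolvent construction).

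One further step is missing from your outline. In the first branch of the trichotomy one only concludes that the cross ratios of $\beta$ lie in a fixed finite set, so $\beta$ lies in finitely many $\Q$-equivalence classes. Passing from this to finitely many \emph{orders} needs a separate lemma (the paper proves it earlier, in the course of Theorem~1.1): within a fixed $\Q$-equivalence class, the $\beta$ for which $\Z[\beta]$ is two times monogenic fall into finitely many $\Z$-equivalence classes. That argument uses $A[\alpha]=A[\beta]$ a second time, comparing leading coefficients of the polynomial expressing $\alpha$ in terms of $\beta$ against the discriminant to bound the valuations of the scaling factor.
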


In Section \ref{1a} we present some immediate applications of our results
to canonical number systems.
In Section \ref{2} we formulate generalizations of
Theorems \ref{T1.1} and \ref{T1.2}
for the case that the ground ring
is an arbitrary integrally closed domain which is finitely generated
over $\Z$. Sections \ref{3}--\ref{8} contain auxiliary results,
and Sections \ref{5}--\ref{7}
contain our proofs.

Our proofs of Theorems \ref{T1.1} and \ref{T1.2} use
finiteness results on unit equations
in more than two unknowns, together with some combinatorial
arguments. 
At present, it is not known how to make the results on unit equations
effective,
therefore we are not able
to determine effectively the
three times monogenic orders in Theorem \ref{T1.1}, or the
two times monogenic orders not of type I or II in Theorem \ref{T1.2}.
Although it is possible to estimate from above the number of solutions of unit
equations, it is because of  
the combinatorial arguments in our proofs that we are not able
to estimate from above the numbers of exceptional orders in Theorems \ref{T1.1}
and \ref{T1.2}.

We finish this introduction with constructing number fields
having infinitely many orders of type I, respectively type II.

Let $K$ be a number field of degree $\geq 3$
which is not a totally complex quadratic extension of a totally real field.
By Dirichlet's Unit Theorem, for any proper subfield $L$ of $K$,
the rank of $\OO_L^*$ (the group of units  of the ring of integers of $L$)
is smaller than that of $\OO_K^*$.
We show that $K$ has infinitely many orders of type I.
Take $\left(\begin{smallmatrix}a_1&a_2\\a_3&a_4\end{smallmatrix}\right)\in\GL (2,\Zz )$
with $a_3\not= 0$. Suppose that there is $u_0\in \OO_K^*$
such that $u_0\equiv a_4\pmod{a_3}$.
This is the case for instance if $a_4=1$. By the Euler-Fermat Theorem for
number fields, there is a positive integer $t$ such that $u^t\equiv 1\pmod{a_3}$
for every $u\in \OO_K^*$. Hence
the group of units $u\in \OO_K^*$
with $u\equiv 1\pmod{a_3}$ has finite index in $\OO_K^*$. Consequently,
there are
infinitely many units $u\in \OO_K^*$ with $u\equiv a_4\pmod{a_3}$.
By our assumption on $K$, among these, there are infinitely many  $u$ with
$\Qq (u)=K$. For each such $u$, put
\[
\alpha := \frac{u -a_4}{a_3},\ \ \beta :=\frac{a_1\alpha +a_2}{a_3\alpha +a_4}.
\]
Then clearly, $K=\Qq (\alpha )$.
From the minimal polynomial of $u$ we derive a relation $u^{-1}=f(u)$
with $f\in\Zz [X]$.
Hence $\beta = (a_1\alpha +a_2)f(a_3\alpha +a_4)\in\Zz [\alpha ]$.
Since $\beta =(a_4\beta -a_2)/(-a_3\beta +a_1)$ and
$-a_3\beta +a_1 =\pm u^{-1}$,
we obtain in a similar fashion $\alpha\in\Zz [\beta ]$. Therefore,
$\Zz [\alpha ]=\Zz [\beta ]$. By varying
$\left(\begin{smallmatrix}a_1&a_2\\a_3&a_4\end{smallmatrix}\right)$ and $u$
we obtain infinitely many orders of type I in $K$.

For instance, for $u\in \OO_K^*$ we have $\Zz [u]=\Zz [u^{-1}]$
and the discriminant of this order is the discriminant of
(the minimal polynomial of) $u$.
By Gy\H{o}ry \cite[Corollaire 2.2]{Gy1},
there are at most finitely many units $u\in\OO_K^*$ of given discriminant.
Hence there are infinitely
many distinct orders among $\Zz [u]$ ($u\in\OO_K^*$).

We now construct quartic fields with infinitely many orders of type II.
The construction is based on the theory of cubic resolvents, see
van der Waerden \cite[\S64]{vdWaerden}.

Let $r,s$ be integers such that
the polynomial $f(X)=(X^2-r)^2-X-s$ is irreducible and has Galois group
$S_4$. There are infinitely many such pairs $(r,s)$ (see, e.g.,
Kappe and Warren \cite{KW89}).
Denote by $\alpha^{(1)}=\alpha ,\alpha^{(2)},\alpha^{(3)},\alpha^{(4)}$
the roots of $f$ and let $K:=\Qq (\alpha )$. Define
\begin{eqnarray*}
\eta_1 &:=& -(\alpha^{(1)}+\alpha^{(2)})(\alpha^{(3)}+\alpha^{(4)})
= (\alpha^{(1)}+\alpha^{(2)})^2,
\\
\eta_2 &:=& -(\alpha^{(1)}+\alpha^{(3)})(\alpha^{(2)}+\alpha^{(4)})
= (\alpha^{(1)}+\alpha^{(3)})^2,
\\
\eta_3 &:=& -(\alpha^{(1)}+\alpha^{(4)})(\alpha^{(2)}+\alpha^{(3)})
= (\alpha^{(1)}+\alpha^{(4)})^2.
\end{eqnarray*}
Then
\begin{equation}\label{1.4}
(X-\eta_1)(X-\eta_2)(X-\eta_3)=X^3-4rX^2+4sX-1.
\end{equation}
Take
\[
\sqrt{\eta_1}=\alpha^{(1)}+\alpha^{(2)},\
\sqrt{\eta_2}=\alpha^{(1)}+\alpha^{(3)},\
\sqrt{\eta_3}=\alpha^{(1)}+\alpha^{(4)}.
\]
Then
\begin{equation}\label{1.6}
\sqrt{\eta_1}\cdot \sqrt{\eta_2}\cdot \sqrt{\eta_3} =1.
\end{equation}

By the Gauss-Fermat Theorem over number fields,
there exists a positive integer $t$ such that
\begin{equation}\label{1.5}
\eta_1^t\equiv 1\pmod{4}.
\end{equation}
Consider for $m=0,1,2,\ldots$ the numbers
\begin{eqnarray*}
&&\alpha_m :=\half\left( \sqrt{\eta_1}^{1+2mt}\,+\sqrt{\eta_2}^{1+2mt}\,
+\sqrt{\eta_3}^{1+2mt}\right),
\\
&&\beta_m :=\half\left(\sqrt{\eta_1}^{\,-1-2mt}\,+\sqrt{\eta_2}^{\,-1-2mt}\,
+\sqrt{\eta_3}^{\,-1-2mt}\right).
\end{eqnarray*}
The numbers $\alpha_m$ are invariant under any automorphism that permutes
$\alpha^{(2)},\alpha^{(3)},\alpha^{(4)}$, i.e., under any automorphism
that leaves $K$ invariant, hence they belong to $K$.
Further, they have four distinct conjugates, so $\Qq (\alpha_m)=K$.
Next, by \eqref{1.6},
\[
\beta_m =\alpha_m^2 -r_m,\ \ \alpha_m=\beta_m^2-s_m,
\]
where
\begin{eqnarray*}
&&r_m=\quarter\left(\eta_1^{1+2mt}+\eta_2^{1+2mt}+\eta_3^{1+2mt}\right),
\\
&&s_m=\quarter\left(\eta_1^{-1-2mt}+\eta_2^{-1-2mt}+\eta_3^{-1-2mt}\right).
\end{eqnarray*}
By \eqref{1.4},\eqref{1.5}, $r_m,s_m$ are rational integers, hence
$\alpha_m,\beta_m$ are algebraic integers for every $m$.
We thus obtain for every non-negative integer $m$ an order
$\Zz [\alpha_m]=\Zz [\beta_m]$ of type II in $K$.

We claim that among the orders $\Zz [\alpha_m]$ there
are infinitely many distinct ones.
Denote by $D_m$ the discriminant of $\Zz [\alpha_m]$.
Then $D_m$ is equal to the discriminant of $\alpha_m$,
and a straightforward computation shows that this is equal to the
discriminant of $\eta_1^{1+2mt}$.
By \cite[Corollaire 2.2]{Gy1}, we have
$|D_m|\to\infty$ as $m\to\infty$.
This implies our claim.

\section{Application to canonical number systems}\label{1a}

Let $K$  be an algebraic number field of degree $\geq 2$, and $\OO$ an order in $K$. A nonzero element
$\alpha$ in  $\OO$ is called a {\it basis of a canonical number system} (or CNS basis) for $\OO$
if every nonzero element of $\OO$ can be represented in the form
$$
a_0 + a_1 \alpha + \cdots  + a_m \alpha ^m
$$
with  $m\geq 0$, $a_i \in \{ 0, 1\kdots |N_{K/\Q} (\alpha)|-1 \}$
for $i=0\kdots m$, and $a_m \ne 0$. Canonical number systems can be viewed
as natural generalizations of radix representations of rational integers to algebraic integers.

When there exists a canonical number system in $\OO$, 
then  $\OO$ is called a CNS {\it order}. Orders of
this kind have been intensively investigated; we refer to the survey paper \cite{BrunHuP} and the references
given there.

It was proved by Kov\'acs \cite{KB} and Kov\'acs and Peth\H o \cite{KBP} that $\OO$ is a CNS order if and only
if $\OO$ is monogenic. More precisely, if $\alpha$ is a CNS basis in $\OO$, then it is easily seen that $\OO=\Z[\alpha]$.
Conversely, $\OO=\Z[\alpha]$ does not imply in general that $\alpha$ is a CNS basis. However, in this case
there are infinitely many  $\alpha'$  which are $\Z$-equivalent to $\alpha$ such that $\alpha'$ is a CNS basis for $\OO$.
A characterization of CNS bases in $\OO$ is given in \cite{KBP}.

The close connection between elements $\alpha$ of $\OO$ with $\OO=\Z[\alpha]$ and CNS bases in $\OO$ enables one
to apply results concerning monogenic orders to CNS orders and CNS bases. The results presented in Section \ref{1} have
immediate applications of this type. For example, it follows that up to $\Z$-equivalence there are only finitely many
canonical number systems in $\OO$.

We say that $\OO$ is a $k$-{\it times} CNS order if there are at least $k$ pairwise $\Z$-inequivalent
CNS bases in $\OO$. Theorem 1.1 gives the following.

\begin{corollary}\label{C2.1}
Let $K$ be an algebraic number field of degree $\geq 3$. Then there are at most finitely
many three times CNS orders in $K$.
\end{corollary}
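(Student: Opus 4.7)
The plan is to deduce Corollary \ref{C2.1} directly from Theorem \ref{T1.1}. The bridge is the characterization of CNS bases recalled earlier in Section \ref{1a}: if $\alpha\in\OO$ is a CNS basis for $\OO$, then automatically $\OO=\Zz[\alpha]$. Thus every CNS basis of $\OO$ is in particular a generator of $\OO$ as a monogenic order.

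Given this, the argument is essentially tautological. Suppose $\OO$ is a three times CNS order in $K$, so that there exist three pairwise $\Zz$-inequivalent CNS bases $\alpha_1,\alpha_2,\alpha_3\in\OO$. By the remark above, each $\alpha_i$ satisfies $\Zz[\alpha_i]=\OO$, so these three elements are pairwise $\Zz$-inequivalent solutions of the equation $\Zz[\alpha]=\OO$ in $\alpha\in\OO$. Hence $\OO$ is a three times monogenic order in the sense of the definition in Section \ref{1}.

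Since $K$ has degree $\geq 3$, Theorem \ref{T1.1} applies and asserts that there are only finitely many three times monogenic orders in $K$. The set of three times CNS orders is, by the previous paragraph, a subset of the set of three times monogenic orders, and is therefore itself finite. This gives the conclusion of the corollary.

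There is no real obstacle here: the only substantive ingredient beyond Theorem \ref{T1.1} is the one-directional implication ``CNS basis $\Rightarrow$ monogenic generator,'' which is precisely the elementary part of the Kov\'acs--Peth\H o characterization recalled in Section \ref{1a}. One does not need the converse direction (that every monogenic order admits some CNS basis), nor the finer description of which generators $\alpha$ are actually CNS bases.
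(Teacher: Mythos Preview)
Your proof is correct and follows exactly the route the paper intends: the corollary is stated immediately after the sentence ``Theorem 1.1 gives the following,'' with no further proof provided, so the deduction via the implication ``CNS basis $\Rightarrow$ $\OO=\Zz[\alpha]$'' is precisely what is meant. Your observation that only the elementary direction of the Kov\'acs--Peth\H{o} result is needed is also accurate.
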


\section{Results over finitely generated domains}\label{2}

\setcounter{equation}{0}

Let $A$ be a domain with quotient field $L$ of characteristic $0$.
Suppose that $A$ is integrally closed, and that $A$ is finitely
generated over $\Z$ as a $\Z$-algebra.
Let $K$ be a finite extension of $L$ of
degree at least $3$, $A_K$ the integral closure of $A$ in $K$,
and $\OO$ an $A$-order in $K$, that is a subring
of $A_K$ which contains $A$ and which has quotient field $K$.
Consider the equation
\begin{equation}\label{2.1}
A[\al]=\OO \ \ \ \text{in} \ \ \al \in \OO.
\end{equation}
The solutions of this equation can be divided into $A$-equivalence
classes, where two elements $\al,\be$ of $\OO$ are called $A$-{\it equivalent}
if $\be = u\al + a$ for some $a \in A$ and $u \in A^*$. Here $A^*$ denotes the
multiplicative group of invertible elements of $A$. As is known (see Roquette \cite{Roquette1}),
$A^*$ is finitely generated.

It was proved by Gy\H ory \cite{Gy17}
that the set of $\al$ with \eqref{2.1} is a
union of finitely many $A$-equivalence classes. An explicit upper bound for the
number of these $A$-equivalence classes has been derived by Evertse and Gy\H ory \cite{EGy1}.
An effective version has been established by Gy\H ory for certain special types of domains
\cite{Gy20}.

We now formulate our generalizations of the results from the previous sections to $A$-orders.
We call an $A$-order $\OO$ $k$ \emph{times monogenic,}
if Eq. \eqref{2.1} has at least $k$ $A$-equivalence
classes of solutions.

\begin{theorem}\label{T2.1}
Let $A$ be a domain with quotient field $L$ of characteristic $0$
which is integrally closed and finitely generated over $\Zz$, and
let $K$ be a finite extension of $L$ of degree $\geq 3$.
Then there are at most finitely many three times monogenic $A$-orders in $K$.
\end{theorem}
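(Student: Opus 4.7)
The plan is to convert the hypothesis that $\OO$ admits three pairwise $A$-inequivalent generators into a system of unit equations in the integral closure $A_G$ of $A$ in the Galois closure $G$ of $K$ over $L$, then to apply the Evertse--Győry finiteness theorem for unit equations in many unknowns over finitely generated domains, together with a combinatorial case analysis for degenerate solutions.

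Write $d:=[K:L]\geq 3$ and let $\sigma_1,\dots,\sigma_d$ denote the $L$-embeddings $K\hookrightarrow G$; for $\alpha\in K$ put $\alpha^{(i)}:=\sigma_i(\alpha)$. By a theorem of Roquette, $A_G^*$ is finitely generated. The starting point, which generalises Győry's classical argument, is the observation that whenever $\alpha,\beta\in\OO$ satisfy $A[\alpha]=A[\beta]=\OO$, each is a polynomial in the other over $A$, and hence
\[
\frac{\beta^{(i)}-\beta^{(j)}}{\alpha^{(i)}-\alpha^{(j)}}\in A_G^*\qquad\text{for all }i\neq j.
\]
Applying this to pairwise $A$-inequivalent generators $\alpha_1,\alpha_2,\alpha_3$ of a three times monogenic $\OO$ and setting
\[
u_{ij}^{(k)}:=\frac{\alpha_k^{(i)}-\alpha_k^{(j)}}{\alpha_1^{(i)}-\alpha_1^{(j)}}\in A_G^*\qquad (k=2,3,\ i\neq j),
\]
the telescoping identity $(\alpha_k^{(i)}-\alpha_k^{(j)})+(\alpha_k^{(j)}-\alpha_k^{(\ell)})=\alpha_k^{(i)}-\alpha_k^{(\ell)}$ written out for $k=1,2,3$ gives three linear relations in the two unknowns $\alpha_1^{(i)}-\alpha_1^{(j)}$ and $\alpha_1^{(j)}-\alpha_1^{(\ell)}$; eliminating these yields
\[
(u_{ij}^{(2)}-u_{i\ell}^{(2)})(u_{i\ell}^{(3)}-u_{j\ell}^{(3)})=(u_{ij}^{(3)}-u_{i\ell}^{(3)})(u_{i\ell}^{(2)}-u_{j\ell}^{(2)}),
\]
which on expansion is a six-term linear relation $\sum_{m=1}^{6}\pm v_m=0$ with each $v_m$ a product of two elements of $A_G^*$.

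By the Evertse--Győry theorem, such a six-term unit equation over the finitely generated domain $A_G$ has only finitely many projective non-degenerate solutions (meaning no proper subsum of the $v_m$ vanishes). Up to the natural rescaling $\alpha_k\mapsto c_k\alpha_k$ ($k=2,3$) this pins down the units $u_{ij}^{(k)}$, and via the elimination above it also determines all ratios $(\alpha_1^{(i)}-\alpha_1^{(j)})/(\alpha_1^{(k)}-\alpha_1^{(\ell)})$, so the entire difference tuple $(\alpha_1^{(i)}-\alpha_1^{(j)})_{i<j}$ is fixed up to a single scalar $\lambda\in L^*$, and $\alpha_1$ itself is then determined up to translation by $L$ together with multiplication by $\lambda$. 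Integrality $\alpha_1\in A_K$ confines $\lambda$ to a fractional $A$-ideal, while the requirement that $\alpha_k=P_k(\alpha_1)$ for some $P_k\in A[X]$ ($k=2,3$) forces, upon equating coefficients on a fixed $L$-basis of $K$, divisibility constraints of the form $\lambda^{k-1}\mid b_k$ in $A_G$ for certain fixed $b_k$ determined by the unit data. Since $\alpha_2,\alpha_3$ are $A$-inequivalent to $\alpha_1$, neither $P_k$ can be linear, so some $b_k$ with $k\geq 2$ is non-zero, and these divisibility relations bound $\lambda$ to finitely many values modulo $A^*$. As $A$-translation of $\alpha_1$ does not change the order $\OO=A[\alpha_1]$, this yields finitely many possibilities for $\OO$.

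The main obstacle is the combinatorial analysis of degenerate solutions of the six-term unit equation: when some proper subsum of the $v_m$ vanishes, the finiteness theorem gives no information, and one must branch into cases according to which subsums vanish. In each such case the additional algebraic dependencies among the $u_{ij}^{(k)}$ must be shown either to contradict the pairwise $A$-inequivalence of $\alpha_1,\alpha_2,\alpha_3$ (for instance by forcing one $\alpha_k$ to be $A$-equivalent to another after absorbing a unit factor) or to collapse to a unit equation in fewer unknowns whose own finiteness can be established directly. This combinatorial case analysis is precisely the step that the authors single out in the introduction as the reason the resulting finiteness statement cannot be made quantitatively effective.
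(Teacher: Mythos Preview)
Your setup is sound up through the derivation of the six-term unit relation, but the proof as written has a genuine gap: you do not carry out the degenerate case analysis, you only assert that it can be done. Saying that in each degenerate subcase the extra relations ``must be shown either to contradict the pairwise $A$-inequivalence \dots\ or to collapse to a unit equation in fewer unknowns'' is a description of what one hopes to achieve, not a proof. Concretely, a vanishing two-term subsum such as $u_{ij}^{(2)}u_{i\ell}^{(3)}=u_{ij}^{(2)}u_{j\ell}^{(3)}$ only gives $u_{i\ell}^{(3)}=u_{j\ell}^{(3)}$ for that single triple $(i,j,\ell)$; it does not by itself force $\alpha_3$ to be $A$-equivalent to $\alpha_1$, and the obstruction to a clean induction is that the subsum pattern may vary with the triple. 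The paper's proof of this step (its Lemma~\ref{L4}) is by far the longest part of the argument and involves a careful infinite-sequence contradiction using a combinatorial property of the family $\mathcal{S}$ of ``generic'' triples; this is exactly the work you have omitted. A second, smaller gap: in the non-degenerate case your six-term equation lives on one fixed triple $(i,j,\ell)$, so it only determines the single cross ratio $(\alpha_1^{(i)}-\alpha_1^{(j)})/(\alpha_1^{(j)}-\alpha_1^{(\ell)})$; for $d\geq 4$ you must still explain how the finitely many solutions for different triples are pieced together to determine the full tuple $\tau(\alpha_1)$.

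It is worth noting that the paper's route is different and avoids multi-term unit equations altogether. Instead of your six-term relation, it applies the Evertse--Gy\H{o}ry--Stewart--Tijdeman theorem (Lemma~\ref{L2}): for all but finitely many normalised pairs $(a_1,a_2)$ the two-term unit equation $a_1x+a_2y=1$ has at most two solutions in $A_G^*\times A_G^*$. Since three pairwise $A$-inequivalent generators of $\OO$ produce three solutions of such an equation with coefficients $(\beta^{(ijk)},\beta^{(kji)})$, two of them must coincide for generic $\beta$, and this is what drives the contradiction. The passage from $L$-equivalence classes to $A$-equivalence classes is then handled separately (Lemma~\ref{L5}) by a divisibility argument close in spirit to your $\lambda^{k-1}\mid b_k$ sketch, but with the crucial preliminary step of showing that the ``other'' generator also lies in one of finitely many $L$-equivalence classes depending only on that of $\beta$.
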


We now turn to two times monogenic $A$-orders.
Let again $K$ be a finite extension of $L$
of degree at least $3$. We call $\OO$ an $A$-order in $K$ of {\bf type I} if there
are $\alpha ,\beta\in\OO$ and
$\left(\begin{smallmatrix}a_1&a_2\\a_3&a_4\end{smallmatrix}\right)\in\GL (2,L)$
such that
\begin{equation}\label{2.2}
K=L(\alpha ),\ \OO =A[\alpha ] =A[\beta ],\  \beta =\frac{a_1\alpha +a_2}{a_3\alpha +a_4},\  a_3\not= 0.
\end{equation}
It should be noted that in the previous section (with $L=\Qq,\, A=\Zz$) we had in our definition
\eqref{1.2} of orders of type I
the stronger requirement $\left(\begin{smallmatrix}a_1&a_2\\a_3&a_4\end{smallmatrix}\right)\in\GL (2,\Zz )$
instead of $\left(\begin{smallmatrix}a_1&a_2\\a_3&a_4\end{smallmatrix}\right)\in\GL (2,\Qq )$.
In fact, if $A$ is a principal ideal domain, we can choose $a_1,a_2,a_3,a_4$ in \eqref{2.2} such that
$a_1,a_2,a_3,a_4\in A$ and the ideal generated by $a_1\kdots a_4$ equals $A$.
In that case, according to Lemma \ref{L7.1} proved in Section \ref{8} below, \eqref{2.2} implies that
$\left(\begin{smallmatrix}a_1&a_2\\a_3&a_4\end{smallmatrix}\right)\in\GL (2,A)$.

$A$-orders of type II exist only in extensions of $L$ of degree $4$. Thus, let $K$ be an extension
of $L$ of degree $4$. We call $\OO$ an $A$-order in $K$ of {\bf type II} if there are $\alpha ,\beta\in\OO$
and $a_0,a_1,a_2,b_0,b_1,b_2\in A$ with $a_0b_0\not= 0$, such that
\begin{eqnarray}\label{2.3}
&&K=L(\alpha),\ \ \OO =A[\alpha ]=A[\beta ],
\\
\nonumber
&&\beta =a_0\alpha^2 +a_1\alpha +a_2,\ \ \alpha =b_0\beta^2 +b_1\alpha +b_2.
\end{eqnarray}

\begin{theorem}\label{T2.2}
Let $A$ be a domain with quotient field $L$ of characteristic $0$
which is integrally closed and finitely generated over $\Zz$, and
let $K$ be a finite extension of $L$. Denote by $G$ the normal closure
of $K$ over $L$.
\\[0.1cm]
{\bf (i)} Suppose $[K:L]=3$. Then every two times monogenic $A$-order in $K$ is of type I.
\\[0.1cm]
{\bf (ii)} Suppose $[K:L]=4$ and ${\rm Gal}(G/L)\cong S_4$.
Then there are only finitely many two times monogenic $A$-orders in $K$
which are not of type I or type II.
\\[0.1cm]
{\bf (iii)} Suppose $[K:L]\geq 5$ and that $K$ is four times transitive over $L$.
Then there are only finitely many two times monogenic $A$-orders in $K$
which are not of type I.
\end{theorem}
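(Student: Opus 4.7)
The plan is to reduce the problem to an $S$-unit-equation analysis of the ``discriminant units''. Suppose $\OO=A[\alpha]=A[\beta]$ is a two times monogenic $A$-order with $\alpha,\beta$ $A$-inequivalent, and let $G$ denote the normal closure of $K/L$, $A_G$ the integral closure of $A$ in $G$, and $\alpha^{(i)},\beta^{(i)}$ ($i=1,\dots,d=[K:L]$) the $L$-conjugates. Writing $\beta=h(\alpha)$ and $\alpha=h'(\beta)$ with $h,h'\in A[X]$ and factoring $h(x)-h(y)$ by $x-y$, both
\[
\gamma_{ij}:=\frac{\beta^{(i)}-\beta^{(j)}}{\alpha^{(i)}-\alpha^{(j)}}
\]
and $\gamma_{ij}^{-1}$ lie in $A_G$, so $\gamma_{ij}\in A_G^*$. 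The generic infinite families will come out as type I and, for $d=4$ with $\Gal(G/L)\cong S_4$, type II; all remaining configurations will be pinned down by the $S$-unit-equation finiteness results over finitely generated domains cited in the paper's auxiliary sections.

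For part (i), $d=3$: the unique Möbius $\phi\in\overline L(X)$ with $\phi(\alpha^{(i)})=\beta^{(i)}$ for $i=1,2,3$ satisfies $\phi^\sigma=\phi$ for every $\sigma\in\Gal(G/L)$ (apply uniqueness to the permuted interpolation problem), so $\phi\in L(X)$. Writing $\phi(X)=(a_1X+a_2)/(a_3X+a_4)$ with $a_i\in A$, the case $a_3=0$ would give $\beta=(a_1/a_4)\alpha+a_2/a_4$, and uniqueness of the expansions of $\beta$ and $\alpha$ in the $L$-bases $\{1,\alpha,\alpha^2\}$ and $\{1,\beta,\beta^2\}$ forces $a_1/a_4\in A^*$ and $a_2/a_4\in A$, contradicting $A$-inequivalence. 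Thus $\OO$ is of type I.

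For $d\ge 4$, on each 4-subset $\{i,j,k,l\}$ apply the Plücker identity $P+Q+R=0$, with $P=(\alpha^{(i)}-\alpha^{(j)})(\alpha^{(k)}-\alpha^{(l)})$ and $Q,R$ its two companions, along with its $\beta$-analogue; substituting $\beta^{(p)}-\beta^{(q)}=\gamma_{pq}(\alpha^{(p)}-\alpha^{(q)})$ gives
\[
\gamma_{ij}\gamma_{kl}\,P+\gamma_{ik}\gamma_{lj}\,Q+\gamma_{il}\gamma_{jk}\,R=0.
\]
If the three products $\gamma_{ij}\gamma_{kl}$, $\gamma_{ik}\gamma_{lj}$, $\gamma_{il}\gamma_{jk}$ coincide on every quadruple (equivalently, all cross-ratios of the $\alpha$- and $\beta$-conjugates agree), a single Möbius $\phi\in\overline L(X)$ interpolates $\alpha^{(i)}\mapsto\beta^{(i)}$; as in part (i), Galois invariance forces $\phi\in L(X)$ and yields type I. Otherwise, the combined Plücker identity expresses the cross-ratio of $\alpha$ on the quadruple as a rational function of the $\gamma$-products, and the additional relations from other quadruples (for $d\ge 5$) yield genuine $S$-unit equations whose finiteness confines $(\alpha,\beta)$ to finitely many $A$-equivalence classes of orders. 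Part (iii) follows: four-times transitivity of $\Gal(G/L)$ propagates the three-way coincidence at one quadruple to every quadruple, so the only infinite family is type I.

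Part (ii) requires handling one extra branch. When $d=4$ the Plücker identity lives on the single quadruple $\{1,2,3,4\}$, and beyond the Möbius pattern a further configuration admits infinite families of solutions: the one in which the $\gamma$-product differences $\gamma_{ij}\gamma_{kl}-\gamma_{i'j'}\gamma_{k'l'}$ are proportional to the Plücker monomials of $\alpha$ --- equivalently, $\gamma_{ij}$ is an affine function of $\alpha^{(i)}+\alpha^{(j)}$. This configuration translates into $\beta=a_0\alpha^2+a_1\alpha+a_2$ with the reciprocal quadratic $\alpha=b_0\beta^2+b_1\beta+b_2$, i.e.\ type II. Its Galois coherence hinges on the normal Klein-four $V_4\triangleleft S_4$ and its $S_3=S_4/V_4$ quotient acting on the three pairings of $\{1,2,3,4\}$ through the cubic resolvent (the $V_4$-fixed subfield of $G$), in keeping with the explicit type-II construction at the end of Section \ref{1}. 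The main obstacle is the combinatorial classification in this case: enumerating the possible configurations of the three $\gamma$-products against the Plücker monomials, showing that only the Möbius pattern (type I) and the $V_4$-coherent affine-in-the-sum pattern (type II) admit infinite families, and reducing every remaining configuration to a non-degenerate $S$-unit equation with only finitely many solutions. A subsidiary technical point throughout is to control the $A$-equivalence freedom in $(\alpha,\beta)$ --- which rescales the $\gamma_{ij}$ by a common $A^*$-factor --- so that ``non-degenerate'' in the $S$-unit-equation sense really holds modulo exactly this symmetry.
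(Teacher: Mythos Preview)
Your treatment of part (i) is correct and essentially matches the paper's Lemma~\ref{L7.2}: the M\"obius interpolation plus Galois descent is exactly the right argument.

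For parts (ii) and (iii), however, your proposal is a sketch with a genuine gap rather than a proof. You correctly set up the Pl\"ucker relations and correctly identify that the coincidence $\gamma_{ij}\gamma_{kl}=\gamma_{ik}\gamma_{jl}=\gamma_{il}\gamma_{jk}$ on every quadruple yields type~I via a single M\"obius map defined over $L$. But the heart of the argument is precisely what you label ``the main obstacle'' and then do not carry out: the classification of the remaining configurations. In the paper this is done by first eliminating the $\beta$-conjugates entirely, obtaining from three instances of \eqref{6.1} the pure unit equation \eqref{6.2}, namely $(x_1-1)(x_2-1)(x_3-1)=(y_1-1)(y_2-1)(y_3-1)$ with each $x_i,y_j$ a quotient of the $\ve_{pq}$. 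Proposition~\ref{P6.1} then classifies solutions of this equation via Laurent's theorem (Lemma~\ref{L0}) and a ten-case analysis of the resulting polynomial identity \eqref{6.10}. This produces the trichotomy of Lemma~\ref{L7.3}: either the $\ve_{ij}/\ve_{ik}$ lie in a fixed finite set, or all cross-ratios agree (type~I), or $d=4$ and $\ve_{ij}=-\ve_{kl}$ for every permutation $(i,j,k,l)$ of $(1,2,3,4)$. The last alternative is what yields type~II, via an explicit construction of $\alpha_0,\beta_0$ satisfying \eqref{7.12}; your description of type~II as ``$\gamma_{ij}$ affine in $\alpha^{(i)}+\alpha^{(j)}$'' is not obviously equivalent to this, and you do not derive it. Your Pl\"ucker route could in principle reach the same place, but the classification work---showing that \emph{only} the M\"obius and (for $d=4$) the sign-pattern $\ve_{ij}=-\ve_{kl}$ configurations escape the finite set---is simply absent.

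There is a second missing step. Even once one knows that outside types~I and~II the cross-ratios $(\beta^{(i)}-\beta^{(j)})/(\beta^{(i)}-\beta^{(k)})$ lie in a fixed finite set, this only confines $\beta$ to finitely many $L$-equivalence classes (via Lemma~\ref{L3}). Passing from there to finitely many $A$-equivalence classes, hence finitely many orders, requires the separate argument of Lemma~\ref{L5}, which uses a valuation bound coming from the degree of the polynomial expressing $\alpha$ in terms of $\beta$. Your final sentence gestures at controlling the $A$-equivalence freedom, but does not supply this step.
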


\section{Equations with unknowns from a finitely generated multiplicative group}\label{3}

\setcounter{equation}{0}

The main tools in the proofs of Theorems \ref{T2.1} and \ref{T2.2}
are finiteness results on polynomial equations of which the unknowns
are taken from finitely generated multiplicative groups.
In this section, we have collected what is needed.
Below, $G$ is a field of characteristic $0$.

\begin{lemma}\label{L1} Let $a_1,a_2 \in G^*$ and let $\Gamma$ be a finitely generated
subgroup of $G^*$.
Then the equation
\begin{equation}\label{3.1}
a_1x_1+a_2x_2=1 \ \ \ \text{in} \ x_1,x_2\in \Gamma
\end{equation}
has only finitely many solutions.
\end{lemma}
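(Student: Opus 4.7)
The plan is to reduce the statement to the classical finiteness theorem for two-variable $S$-unit equations over a number field, and then invoke that theorem.

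First, let $g_1,\ldots,g_r$ be a set of generators of $\Gamma$, and consider the finitely generated $\Zz$-subalgebra
\[
R:=\Zz\bigl[a_1^{\pm 1},\,a_2^{\pm 1},\,g_1^{\pm 1},\ldots,g_r^{\pm 1}\bigr]\subseteq G.
\]
Every solution of \eqref{3.1} lies in $R\times R$. The first step is to construct a ring homomorphism $\vp\colon R\to\oQq$ whose restriction to $\Gamma$ is injective, so that the image $\vp(\Gamma)\subseteq\oQq^*$ is a finitely generated group of the same torsion-free rank as $\Gamma$. Such a specialization exists by a standard construction: after localizing $R$ away from finitely many elements chosen so as to preserve the torsion and the rank of $\Gamma$, one picks a generic maximal ideal and composes with an embedding of the residue field into $\oQq$.

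Second, injectivity of $\vp$ on $\Gamma$ ensures that distinct solutions $(x_1,x_2)\in\Gamma^2$ of \eqref{3.1} yield distinct solutions $(y_1,y_2)=(\vp(x_1),\vp(x_2))\in\vp(\Gamma)^2$ of the specialized equation
\[
\vp(a_1)y_1+\vp(a_2)y_2=1.
\]
Choose a number field $K\subseteq\oQq$ containing $\vp(a_1),\vp(a_2)$ and $\vp(g_1),\ldots,\vp(g_r)$, together with a finite set $S$ of places of $K$ containing all archimedean places such that $\vp(\Gamma)$ is contained in the group $\OO_{K,S}^*$ of $S$-units of $K$. The specialized equation is then a classical two-variable $S$-unit equation in $K$, for which finiteness of the solution set is a theorem of Mahler, Parry, Evertse and van der Poorten--Schlickewei. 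Since $\vp$ is injective on $\Gamma$, finiteness downstairs transports back to finiteness upstairs.

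The main obstacle is the specialization step: one must ensure that the entire multiplicative structure of $\Gamma$, and not merely its generators, is faithfully preserved by $\vp$. Once this is arranged, the deep input is the $S$-unit equation theorem itself, which in turn rests on Schmidt's subspace theorem in its $p$-adic form (Schlickewei); these are treated as black boxes here.
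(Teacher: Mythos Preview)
The paper's proof is a one-line citation to Lang. Your sketch is a correct outline of the argument behind that reference: specialize to a number field while preserving $\Gamma$ injectively, then apply the classical two-variable $S$-unit theorem.

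Two small remarks on the details. First, the phrase ``generic maximal ideal of $R$'' is slightly off as written: maximal ideals of a finitely generated $\Zz$-algebra have \emph{finite} residue fields, so to land in $\oQq$ one should first pass to $R\otimes_{\Zz}\Qq$ (equivalently, choose a transcendence basis of the fraction field over $\Qq$ and specialize the transcendentals to generic algebraic numbers); then maximal ideals do have number-field residue fields, and injectivity of $\vp|_{\Gamma}$ follows because multiplicative independence of finitely many elements is preserved under a Zariski-generic specialization. Second, for bare finiteness the Subspace Theorem is heavier than needed; Siegel--Mahler (integral points on $\PP^1\setminus\{0,1,\infty\}$) already suffices, and this is closer to Lang's original route. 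Neither point is a genuine gap in your outline.
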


\begin{proof}
See Lang \cite{Lang1}.
\end{proof}

A pair $(a_1,a_2)\in (G^*)^2=G^*\times G^*$ is called \emph{normalized}
if $(1,1)$ is a solution to \eqref{3.1}, i.e., $a_1+a_2=1$.
If \eqref{3.1} has a solution,
$(y_1,y_2)$, say, then by replacing $(a_1,a_2)$ by $(a_1y_1,a_2y_2)$
we obtain an equation like
\eqref{3.1} with a normalized pair of coefficients,
whose number of solutions is the same as
that of the original equation.

\begin{lemma}\label{L2}
Let $\Gamma$ be a finitely generated subgroup of $G^*$.
There is a finite set of normalized pairs in $(G^*)^2$,
such that for every normalized pair $(a_1,a_2) \in (G^*)^2$ outside this set, equation \eqref{3.1}
has at most two solutions, the pair $(1,1)$ included.
\end{lemma}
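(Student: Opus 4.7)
The plan is to show that whenever a normalized pair admits a third solution beyond $(1,1)$, the resulting data must satisfy a rigid multi-term relation in $\Gamma$, and then apply finiteness theorems for multi-term $S$-unit equations.

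First, suppose $(a_1,a_2)$ is a normalized pair admitting three pairwise distinct solutions $(1,1)$, $(x_1,x_2)$, $(y_1,y_2)\in\Gamma^2$. Subtracting the identity $a_1+a_2=1$ from the other two solution equations gives
\[
a_1(x_1-1)+a_2(x_2-1)=0,\qquad a_1(y_1-1)+a_2(y_2-1)=0.
\]
Since $(a_1,a_2)\ne(0,0)$, the determinant of this homogeneous $2\times 2$ system vanishes, yielding the key identity
\[
(x_1-1)(y_2-1)=(x_2-1)(y_1-1),
\]
or equivalently, after expansion, the six-term $\Gamma$-linear relation
\[
x_1y_2+x_2+y_1-x_2y_1-x_1-y_2=0. \qquad (\ast)
\]
Moreover $x_1\ne x_2$ (otherwise $(x_1,x_2)=(1,1)$), so $(a_1,a_2)$ is determined by $(x_1,x_2)$ via $a_1=(1-x_2)/(x_1-x_2)$, $a_2=(x_1-1)/(x_1-x_2)$.

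Next, I would divide $(\ast)$ by a nonzero term, for instance $x_2y_1$, turning it into a five-term $\Gamma$-unit equation of the form $\xi_1+\xi_2+\xi_3+\xi_4+\xi_5=1$, where each $\xi_i$ is a monomial in $x_1,x_2,y_1,y_2$ and hence lies in $\pm\Gamma$. By the multi-term generalization of Lemma~\ref{L1} (due to Evertse, van der Poorten--Schlickewei), the set of such tuples $(\xi_1,\ldots,\xi_5)$ with no vanishing proper subsum is finite; each determines $(x_1,x_2,y_1,y_2)$, and hence $(a_1,a_2)$, uniquely. So the non-degenerate case contributes only finitely many exceptional pairs.

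The remaining step is a case analysis of the degenerate configurations, in which some proper subset of the six summands of $(\ast)$ sums to zero. Most two-term vanishings immediately contradict our setup: e.g.\ $x_1y_2-x_1=0$ forces $y_2=1$ and thus $(y_1,y_2)=(1,1)$; $x_1y_2-x_2y_1=0$ combined with the complementary identity $x_2+y_1=x_1+y_2$ forces $\{x_1,y_2\}=\{x_2,y_1\}$, leading to a coincidence among solutions; and $y_1-x_1=0$, $y_2-x_2=0$, and the like force $(x_1,x_2)=(y_1,y_2)$. The remaining surviving two-term identities, such as $x_2+y_1=0$, leave a four-term $\Gamma$-linear relation among the complementary summands, which is a strictly shorter unit equation; applying the multi-term theorem recursively (down to Lemma~\ref{L1} itself in the two-term base case) gives finiteness for each such sub-case. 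Assembling the cases shows the set of admissible tuples $(x_1,x_2,y_1,y_2)$, and hence of exceptional normalized pairs $(a_1,a_2)$, is finite.

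The chief obstacle is organising this case analysis cleanly: the six-term relation admits many potential vanishing-subsum configurations, and one must verify in each that the conclusion is either an excluded coincidence or a strictly shorter $\Gamma$-linear equation to which induction (on the number of summands) applies. The fact that most pairwise vanishings collapse into the forbidden cases $(x_i,y_j)=(1,1)$ or $(x_1,x_2)=(y_1,y_2)$ keeps the argument tractable, so that only a bounded number of genuinely shorter equations needs to be treated.
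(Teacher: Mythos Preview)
The paper does not prove this lemma at all; it simply cites it as a result of Evertse, Gy\H{o}ry, Stewart and Tijdeman \cite{EGyST2} (with the remark that the proof ultimately depends on the Subspace Theorem). Your proposal is essentially an outline of that original proof: eliminate $(a_1,a_2)$ from three distinct solutions to obtain the six-term $\Gamma$-relation $(\ast)$, handle the non-degenerate case by the multi-term unit-equation theorem, and dispose of the vanishing-subsum configurations by showing that each either forces a forbidden coincidence among the three solutions or reduces to a strictly shorter unit relation. The outline is correct, and the sample checks you give (for $x_1y_2=x_1$, for $x_1y_2=x_2y_1$ together with its complement, for $y_1=x_1$) are accurate; your recovery of $(a_1,a_2)$ from $(x_1,x_2)$ and of $(x_1,x_2,y_1,y_2)$ from the $\xi_i$ is also correct. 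The full argument does require treating the $3+3$ splittings of the six terms as well as the $2+4$ ones you discuss, and in a couple of the surviving sub-cases one must check that the shorter relation still pins down $(x_1,x_2)$ up to finitely many possibilities, but all of this goes through along the lines you indicate. So your proposal supplies what the paper deliberately leaves to the literature, rather than offering a genuinely different route.
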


\begin{proof}
This result is due to Evertse, Gy\H ory, Stewart and Tijdeman \cite{EGyST2};
see also \cite{Gy4}. We note that the proof depends ultimately
on the Subspace Theorem, hence it is ineffective.
\end{proof}

We consider more generally polynomial equations
\begin{equation}\label{3.0}
f(x_1\kdots x_n)=0\ \ \mbox{in}\ x_1\kdots x_n\in\Gamma
\end{equation}
where $f$ is a non-zero polynomial from $G[X_1\kdots X_n]$ and $\Gamma$
is a finitely generated subgroup of $G^*$.
Denote by $T$ an auxiliary variable.
A solution $(x_1\kdots x_n)$ of \eqref{3.0} is called \emph{degenerate},
if there are integers $c_1\kdots c_n$, not all zero, such that
\begin{equation}\label{3.00}
f(x_1T^{c_1}\kdots x_nT^{c_n})\equiv 0\ \ \mbox{identically in $T$,}
\end{equation}
and \emph{non-degenerate} otherwise.

\begin{lemma}\label{L0}
Let $f$ be a non-zero polynomial from $G[X_1\kdots X_n]$ and
$\Gamma$ a finitely generated subgroup of $G^*$.
Then Eq. \eqref{3.0} has only finitely many non-degenerate solutions.
\end{lemma}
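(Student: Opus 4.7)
Expand $f$ as a sum of monomials, $f(\mathbf{X})=\sum_{i=1}^{N}c_{i}\mathbf{X}^{\mathbf{e}_{i}}$, where the $c_{i}\in G^{*}$ and the exponents $\mathbf{e}_{i}\in\Zz_{\geq 0}^{n}$ are pairwise distinct. To each candidate solution $\mathbf{x}\in\Gamma^{n}$ associate the quantities $u_{i}(\mathbf{x}):=c_{i}\mathbf{x}^{\mathbf{e}_{i}}$, which all lie in the finitely generated multiplicative group $\Gamma':=\langle\Gamma,c_{1},\ldots,c_{N}\rangle\leq G^{*}$ and satisfy $\sum_{i=1}^{N}u_{i}(\mathbf{x})=0$. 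Attach to $\mathbf{x}$ its \emph{minimal-vanishing-subsum partition} $\Pi(\mathbf{x})=\{I_{1},\ldots,I_{r}\}$ of $\{1,\ldots,N\}$, where each block $I_{j}$ satisfies $\sum_{i\in I_{j}}u_{i}(\mathbf{x})=0$ but admits no proper non-empty subsum that also vanishes. Since only finitely many partitions of $\{1,\ldots,N\}$ exist, the task reduces to bounding the non-degenerate solutions with $\Pi(\mathbf{x})=\Pi$ for a fixed $\Pi$.

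The first main input is the multidimensional $S$-unit equation theorem of Evertse--Schlickewei--Schmidt, applied blockwise. Picking $\iota_{j}\in I_{j}$ and dividing $\sum_{i\in I_{j}}u_{i}(\mathbf{x})=0$ by $u_{\iota_{j}}(\mathbf{x})$ yields an equation in $|I_{j}|-1$ unknowns from $\Gamma'$ having no vanishing proper subsum; by the theorem it has only finitely many solutions. Consequently the ratios $\rho_{i}(\mathbf{x}):=\mathbf{x}^{\mathbf{e}_{i}-\mathbf{e}_{\iota_{j}}}$ for $i\in I_{j}\setminus\{\iota_{j}\}$, $j=1,\ldots,r$, range over a finite set as $\mathbf{x}$ varies. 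Fix also the values of all these ratios.

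With these data fixed, any two remaining solutions $\mathbf{x},\mathbf{x}'$ satisfy $(\mathbf{x}/\mathbf{x}')^{\mathbf{e}}=1$ for every $\mathbf{e}$ in the $\Zz$-submodule $M\subseteq\Zz^{n}$ generated by the exponent differences $\mathbf{e}_{i}-\mathbf{e}_{\iota_{j}}$, hence lie in a common coset of $H:=\{\mathbf{y}\in\Gamma^{n}:\mathbf{y}^{\mathbf{e}}=1\text{ for all }\mathbf{e}\in M\}$. If $\rank_{\Zz}M=n$ then $H$ is contained in the (finite) torsion subgroup of $\Gamma^{n}$, completing the proof. Otherwise pick $\mathbf{c}\in\Zz^{n}\setminus\{0\}$ annihilating $M$; then $\mathbf{c}\cdot\mathbf{e}_{i}$ is constant on each block $I_{j}$, so every level set $\{i:\mathbf{c}\cdot\mathbf{e}_{i}=k\}$ is a union of blocks and the corresponding subsum of the $u_{i}(\mathbf{x})$ vanishes. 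Writing $f(x_{1}T^{c_{1}},\ldots,x_{n}T^{c_{n}})=\sum_{i=1}^{N}u_{i}(\mathbf{x})\,T^{\mathbf{c}\cdot\mathbf{e}_{i}}$, every coefficient of a power of $T$ vanishes, contradicting the non-degeneracy of $\mathbf{x}$.

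The main obstacle is precisely this last step: bridging the combinatorial output of the multidimensional unit equation (a partition into minimally vanishing subsums with finitely many ratio configurations) and the algebraic non-degeneracy hypothesis (no identity $f(x_{1}T^{c_{1}},\ldots,x_{n}T^{c_{n}})\equiv 0$). The key observation that $\Pi(\mathbf{x})$ automatically refines any partition of $\{1,\ldots,N\}$ by level sets of $\mathbf{c}\cdot\mathbf{e}_{i}$ is what forces the vanishing of every $T$-coefficient and produces the clean contradiction that finishes the argument.
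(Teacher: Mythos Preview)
Your proof is correct, but it takes a different route from the paper's. The paper simply invokes Laurent's theorem on the intersection of a subvariety of $\mathbb{G}_m^n$ with a finitely generated group: $V\cap\Gamma^n$ is contained in a finite union of cosets $\mathbf{x}_iH_i$ of algebraic subgroups with $\mathbf{x}_iH_i\subseteq V$, and the non-degenerate solutions are exactly those lying in zero-dimensional cosets (since a solution $\mathbf{x}$ is degenerate iff $\mathbf{x}H\subseteq V$ for some positive-dimensional algebraic subgroup $H$, which contains a one-parameter subgroup $t\mapsto(t^{c_1},\ldots,t^{c_n})$).

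Your argument instead unwinds the proof of Laurent's theorem in this hypersurface case: you expand $f$ into monomials, reduce to a linear relation $\sum u_i(\mathbf{x})=0$ among units, decompose into minimal vanishing subsums, and apply the multidimensional $S$-unit theorem blockwise to pin down the ratios $\mathbf{x}^{\mathbf{e}_i-\mathbf{e}_{\iota_j}}$; the rank dichotomy on the module $M$ of exponent differences then either forces finiteness or produces a $\mathbf{c}$ witnessing degeneracy. This is essentially the standard proof strategy behind Laurent's theorem itself. The paper's approach is more economical (one citation), while yours is more self-contained, exposes the combinatorial structure explicitly, and could in principle be made quantitative via the Evertse--Schlickewei--Schmidt bounds. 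One minor remark: the qualitative finiteness you use (finitely many projective solutions of $\sum z_i=0$ in a finitely generated group with no vanishing subsum) predates Evertse--Schlickewei--Schmidt and goes back to Evertse and to van der Poorten--Schlickewei; ESS is the later quantitative refinement.
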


\begin{proof}
Given a multiplicative abelian group $H$,
we denote by $H^n$ its $n$-fold direct product
with componentwise multiplication.

Let $V$ be the hypersurface given by $f=0$. Notice that the degenerate solutions
${\bf x}$ are precisely those, for which there exists an algebraic subgroup $H$ of $(G^*)^n$
of dimension $\geq 1$ such that ${\bf x}H\subseteq V$.
By a theorem of Laurent \cite{Laurent},
the intersection $V\cap\Gamma^n$ is contained in a finite union of cosets
${\bf x}_1H_1\cup\cdots\cup {\bf x}_rH_r$
where $H_1\kdots H_r$ are algebraic subgroups of $(G^*)^n$,
${\bf x}_1\kdots {\bf x}_r$
are elements of $\Gamma^n$, and ${\bf x}_iH_i\subseteq V$ for $i=1\kdots r$.
The non-degenerate solutions in our lemma are precisely
the zero-dimensional cosets among ${\bf x}_1H_1\kdots{\bf x}_rH_r$,
while the degenerate solutions
are in the union of the positive dimensional cosets.
\end{proof}

\section{Finitely generated domains}\label{4}

\setcounter{equation}{0}
We recall some facts about domains finitely generated over $\Zz$.

Let $A$ be an integrally closed domain
with quotient field $L$ of characteristic $0$ which is finitely generated over $\Z$.
Then $A$ is a Noetherian domain.
Moreover, $A$ is a \emph{Krull domain;}
see e.g. Bourbaki \cite{Bourbaki1}, Chapter VII, \S 1.
This means the following.
Denote by $\PPP(A)$ the collection of minimal non-zero prime ideals of $A$,
these are the non-zero prime ideals that do
not contain a strictly smaller non-zero prime ideal.
Then there exist normalized discrete valuations $\ord_{\fp}$ $(\fp \in \PPP(A))$ on $L$, such that the
following conditions are satisfied:
\begin{eqnarray}
\label{3.2}
&&
\begin{array}{l}
\text{for every $x \in K^*$ there are only finitely many $\fp \in \PPP(A)$ with}
\\
\ord_{\fp}(x) \ne 0,
\end{array}
\\[0.2cm]
\label{3.3}
&&
\, A=\big\{ x \in K \ : \ \ord_{\fp}(x) \ge 0 \ \text{for} \ \fp \in \PPP(A) \big\},
\\[0.2cm]
\label{3.4}
&&\, \fp=\big\{ x \in A \ : \ \ord_{\fp}(x) > 0\big\}\ \mbox{for } \fp\in\PPP(A). \label{3.4}
\end{eqnarray}
These valuations $\ord_{\fp}$ are uniquely determined.
As is easily seen, for $x,y\in L^*$ we have
\begin{equation}\label{3.4a}
\ord_{\fp}(x)=\ord_{\fp}(y)\ \mbox{for all } \fp\in\PPP(A)\Longleftrightarrow
xy^{-1}\in A^* .
\end{equation}

Let $G$ be a finite extension of $L$. Denote by $A_G$ the integral closure of $A$ in $G$,
and by $A_G^*$ the unit group, i.e., group of invertible elements of $A_G$.
We will apply the results from Section \ref{3} with $\Gamma =A_G^*$. To this end,
we need the following lemma.

\begin{lemma}\label{L-finitely-generated}
The group $A_G^*$ is finitely generated.
\end{lemma}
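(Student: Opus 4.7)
The plan is to reduce the lemma directly to Roquette's theorem (already cited in Section \ref{2} for $A^*$) by showing that $A_G$ enjoys exactly the same two structural properties as $A$: it is integrally closed in its quotient field, and it is finitely generated as a $\Zz$-algebra. Once these are in place, Roquette's result applied to the ring $A_G$ itself gives the conclusion.

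First I would check that the quotient field of $A_G$ is $G$. Given $g\in G$, clearing denominators in its minimal polynomial over $L$ produces a nonzero $a\in A$ with $ag\in A_G$, so $g=(ag)/a\in\mathrm{Frac}(A_G)$. Integral closedness of $A_G$ in $G$ is the standard transitivity statement: anything in $G$ integral over $A_G$ is integral over $A$, and therefore lies in $A_G$ by definition.

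The substantive step is showing $A_G$ is a finitely generated $\Zz$-algebra. Because $A$ is a Noetherian normal domain (it is a Krull domain by Section \ref{4}, and Noetherian as a finitely generated $\Zz$-algebra by Hilbert's basis theorem) and $G/L$ is a finite extension of characteristic zero fields, hence separable, the classical result on integral closure in a finite separable extension of a Noetherian normal domain guarantees that $A_G$ is a finitely generated $A$-module. Pick a module basis $z_1\kdots z_s$ of $A_G$ over $A$ and $\Zz$-algebra generators $y_1\kdots y_r$ of $A$; then $\{y_1\kdots y_r,z_1\kdots z_s\}$ generates $A_G$ as a $\Zz$-algebra.

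Having established that $A_G$ is an integrally closed domain of characteristic $0$ which is finitely generated over $\Zz$, Roquette's theorem \cite{Roquette1} applies to $A_G$ and yields that $A_G^*$ is finitely generated. The only place any real work is hidden is in the citation of the module-finiteness of the integral closure — but the required hypotheses (Noetherian, normal, characteristic zero and hence separability of $G/L$) are all immediate in our setting, so no serious obstacle arises.
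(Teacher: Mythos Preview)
Your proof is correct and follows essentially the same route as the paper: both arguments show that $A_G$ is a finitely generated $A$-module (via the standard separable-extension/trace-form fact, which the paper phrases as containment in a free $A$-module of rank $[G:L]$ plus Noetherianity), deduce that $A_G$ is a finitely generated $\Zz$-algebra, and then invoke Roquette's theorem. One small terminological slip: $A_G$ need not be free over $A$, so ``module basis'' should read ``generating set''; also, Roquette's theorem does not actually require integral closedness, so your verification of that is harmless but superfluous.
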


\begin{proof}
The domain $A_G$ is contained in a free $A$-module of rank $[G:L]$.
Since $A$ is Noetherian, the domain $A_G$ is finitely generated as an $A$-module,
and so it is finitely generated as an algebra over $\Zz$.
Then by a theorem of
Roquette \cite{Roquette1}, the group $A_G^*$ is finitely generated.
\end{proof}

\section{Other auxiliary results}\label{8}

\setcounter{equation}{0}

We have collected some elementary lemmas needed in the proofs
of Theorems \ref{T2.1} and \ref{T2.2}.
Let $A$ be an integrally closed domain
with quotient field $L$ of characteristic $0$ which is finitely generated over $\Z$,
and $K$ a finite extension of $L$ with $[K:L]=: d \geq 3$.
Denote by $G$ the normal closure of $K$ over $L$.
Let $\sigma_1=\id, \dots, \sigma_d$
be the distinct $L$-isomorphisms of $K$ in $G$, and for $\al \in K$ write
$\al^{(i)}:=\sigma_i(\al)$ for $i=1, \dots, d$.
Denote by $A_K$ and $A_G$ the integral closures of $A$ in $K$ and $G$, respectively, and
by $A_G^*$ the multiplicative group of invertible elements of $A_G$.

The \emph{discriminant}
of $\al \in K$ is given by
$$
D_{K/L}(\al):=\prod_{1\leq i < j \leq d}\left(\al^{(i)}-\al^{(j)} \right)^2.
$$
This is an element of $L$. We have $L(\alpha )=K$ if and only if
all conjugates of $\alpha$ are distinct, hence if and only if $D_{K/L}(\alpha )\not= 0$.
Further, if $\alpha$ is integral over $A$ then $D_{K/L}(\alpha )\in A$
since $A$ is integrally closed.

\begin{lemma}\label{L2a}
Let $\al, \be\in A_K$ and suppose that $L(\al)=L(\be)=K$, $A[\al]=A[\be]$. Then
\begin{enumerate}
\item $\displaystyle{\frac{\be^{(i)}-\be^{(j)}}{\al^{(i)}-\al^{(j)}} \in A_G^*}$
for $i,j \in \{1, \dots ,d \}$, $i \ne j$,\vskip0.2cm
\item $\displaystyle{\frac{D_{K/L}(\be)}{D_{K/L}(\al)} \in A^*}$.
\end{enumerate}
\end{lemma}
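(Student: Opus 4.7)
The plan is straightforward: exploit the hypothesis $A[\alpha]=A[\beta]$ to produce polynomial expressions linking $\alpha$ and $\beta$, then apply the Galois conjugates $\sigma_i$.

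First I would pick $f,g\in A[X]$ with $\beta=f(\alpha)$ and $\alpha=g(\beta)$, using that $A[\alpha]=A[\beta]$. Apply $\sigma_i$ (for $i=1,\dots,d$) to obtain $\beta^{(i)}=f(\alpha^{(i)})$ and $\alpha^{(i)}=g(\beta^{(i)})$. Now invoke the standard factorisation: for any polynomial $p\in A[X]$, the element $p(X)-p(Y)$ is divisible in $A[X,Y]$ by $X-Y$. Applied to $f$ and to $g$ at the pair of conjugates this gives
\[
\beta^{(i)}-\beta^{(j)}=(\alpha^{(i)}-\alpha^{(j)})\,h_{ij},\qquad
\alpha^{(i)}-\alpha^{(j)}=(\beta^{(i)}-\beta^{(j)})\,k_{ij},
\]
with $h_{ij},k_{ij}\in A[\alpha^{(i)},\alpha^{(j)},\beta^{(i)},\beta^{(j)}]\subseteq A_G$. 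Since $L(\alpha)=L(\beta)=K$, the conjugates $\alpha^{(i)}$ are pairwise distinct (and likewise for the $\beta^{(i)}$), so $h_{ij},k_{ij}\ne 0$ and $h_{ij}k_{ij}=1$. Hence $h_{ij}\in A_G^*$, which is precisely part (i).

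For part (ii), take the product over all pairs $i<j$ of the squared relation from (i). This yields
\[
\frac{D_{K/L}(\beta)}{D_{K/L}(\alpha)}=\prod_{1\le i<j\le d}h_{ij}^{2}\in A_G^*.
\]
On the other hand, both $D_{K/L}(\alpha)$ and $D_{K/L}(\beta)$ lie in $A$ (as noted in the text, since $\alpha,\beta$ are integral over the integrally closed domain $A$ and the discriminants lie in $L$), and $D_{K/L}(\alpha)\ne 0$ because $L(\alpha)=K$. Therefore $D_{K/L}(\beta)/D_{K/L}(\alpha)\in L$. To finish I would use that $A_G\cap L=A$: indeed $A_G\cap L$ consists of elements of $L$ that are integral over $A$, hence belongs to $A$ since $A$ is integrally closed in $L$. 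The same argument applied to the reciprocal shows $D_{K/L}(\alpha)/D_{K/L}(\beta)\in A$, so the ratio is an element of $A^*$.

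There is no real obstacle here; the only point requiring a little care is the passage from ``unit in $A_G$'' to ``unit in $A$'' in part (ii), which is handled by the integral-closure identity $A_G\cap L=A$ that follows directly from the hypothesis on $A$.
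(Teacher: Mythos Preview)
Your proof is correct and follows essentially the same approach as the paper: use $\beta=f(\alpha)$ and $\alpha=g(\beta)$ with $f,g\in A[X]$ together with the divisibility of $p(X)-p(Y)$ by $X-Y$ for part (i), then take the product and invoke integral closedness of $A$ (i.e., $A_G^*\cap L^*=A^*$) for part (ii). The paper is slightly terser in part (ii), simply noting that the ratio lies in $L^*\cap A_G^*$ and concluding via ``$A$ is integrally closed,'' which is exactly the step you spell out.
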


\begin{proof}
\noindent (i) Let $i,j \in \{1, \dots ,d \}$, $i \ne j$. We have $\be = f(\al)$ for some $f\in A[X]$.
Hence
\[
\frac{\be^{(i)}-\be^{(j)}}{\al^{(i)}-\al^{(j)}}=
\frac{f(\al^{(i)})-f(\al^{(j)})}{\al^{(i)}-\al^{(j)}} \in A_G.
\]
Likewise $(\al^{(i)}-\al^{(j)})/(\be^{(i)}-\be^{(j)}) \in A_G$. This proves (i).

\noindent (ii)
We have on the one hand, $D_{K/L}(\be)/D_{K/L}(\al) \in L^*$, on the other hand
$$
\frac{D_{K/L}(\be)}{D_{K/L}(\al)} =
\prod_{1\leq i < j \leq d} \left(\frac{\be^{(i)}-\be^{(j)}}{\al^{(i)}-\al^{(j)}} \right)^2 \in A_G^*.
$$
Since $A$ is integrally closed, this proves (ii).
\end{proof}

We call two elements $\alpha ,\beta$ of $K$ $L$-equivalent if
$\beta =u\alpha +a$ for some $u\in L^* ,a\in L$.

\begin{lemma}\label{L2b}
Let $\alpha ,\beta\in A_K$ and suppose that
$L(\alpha )=L(\beta )=K$, $A[\alpha ]=A[\beta ]$,
and $\alpha ,\beta$ are $L$-equivalent.
Then $\alpha, \beta$ are $A$-equivalent.
\end{lemma}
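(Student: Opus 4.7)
The plan is straightforward: leverage Lemma \ref{L2a}(i), which already gives us the multiplicative information we need about differences of conjugates, and use the fact that $A$ is integrally closed in $L$ to descend back to $A$.

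First I would write the hypothesis $L$-equivalence as $\beta = u\alpha + a$ with $u \in L^*$ and $a \in L$; the goal is to upgrade this to $u \in A^*$ and $a \in A$. Applying the $L$-isomorphisms $\sigma_i$ and subtracting, for any $i\neq j$ we obtain
\[
\beta^{(i)}-\beta^{(j)} = u\bigl(\alpha^{(i)}-\alpha^{(j)}\bigr),
\]
so that
\[
u = \frac{\beta^{(i)}-\beta^{(j)}}{\alpha^{(i)}-\alpha^{(j)}}.
\]
By Lemma \ref{L2a}(i) this ratio lies in $A_G^*$, hence $u\in A_G^*\cap L^*$.

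Next I would argue that $A_G^*\cap L^* = A^*$. Indeed, $u\in A_G$ means $u$ is integral over $A$, and since $u\in L$ and $A$ is integrally closed in $L$, we get $u\in A$; the same argument applied to $u^{-1}\in A_G$ gives $u^{-1}\in A$, so $u\in A^*$. Once $u\in A^*$ is established, we read off $a = \beta - u\alpha$, which is an element of $A_K$ (since $\beta,\alpha\in A_K$ and $u\in A\subseteq A_K$) and also lies in $L$; integral closedness of $A$ in $L$ (equivalently $A_K\cap L = A$) then gives $a\in A$.

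I don't foresee any real obstacle: the lemma is essentially a bookkeeping consequence of Lemma \ref{L2a}(i) together with integral closedness. The only point worth stating explicitly is that $A$ being integrally closed in $L$ is used twice — first to conclude $u\in A^*$ from $u,u^{-1}\in A_G\cap L$, and then to conclude $a\in A$ from $a\in A_K\cap L$.
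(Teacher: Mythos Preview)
Your proof is correct and follows essentially the same approach as the paper: write $\beta = u\alpha + a$, use Lemma~\ref{L2a} to show $u\in A^*$ via the integral closedness of $A$, and then conclude $a = \beta - u\alpha \in A$. The only cosmetic difference is that the paper invokes part~(ii) of Lemma~\ref{L2a} (obtaining $u^{d(d-1)} = D_{K/L}(\beta)/D_{K/L}(\alpha) \in A^*$ and then $u\in A^*$), whereas you use part~(i) directly to get $u\in A_G^*\cap L^* = A^*$; both routes are equally valid.
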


\begin{proof}
By assumption, $\beta =u\alpha +a$ with $u\in L^*$, $a\in L$.
By the previous lemma, $u^{d(d-1)}=D_{K/L}(\beta )/D_{K/L} (\alpha )\in A^*$,
and then $u\in A^*$ since $A$ is integrally closed. Consequently,
$a=\beta -u\alpha$ is integral over $A$. Hence $a\in A$.
This shows that $\alpha ,\beta$ are $A$-equivalent.
\end{proof}

For $\alpha\in K$ with $ K = L (\alpha )$ we define the ordered
$(d-2)$-tuple
\begin{equation}\label{6.5.taudef}
\tau (\alpha ):=
\Big(\frac{\alpha^{(3)}-\alpha^{(1)}}{\alpha^{(2)}-\alpha^{(1)}}\kdots
\frac{\alpha^{(d)}-\alpha^{(1)}}{\alpha^{(2)}-\alpha^{(1)}}\Big).
\end{equation}

\begin{lemma}\label{L3}
{\bf (i)} Let $\alpha ,\beta$ with $ L (\alpha )= L (\beta )= K$. Then
$\alpha , \beta$ are $ L$-equivalent if and only if $\tau (\alpha
)=\tau (\beta )$.
\\[0.1cm]
{\bf (ii)} Let $\alpha ,\beta\in A_K$ and suppose that
$L(\al)=L(\be)=K$, $A[\al]=A[\be]$.
Then $\alpha , \beta$ are $A$-equivalent if and only if $\tau (\alpha )=\tau
(\beta )$.
\end{lemma}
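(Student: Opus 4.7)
The plan is to prove part (i) directly from the definition of $L$-equivalence and a short Galois-theoretic argument, and then deduce (ii) from (i) together with Lemma \ref{L2b}.

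For the easy direction of (i), suppose $\beta = u\alpha + a$ with $u \in L^{*}$, $a\in L$. Applying each embedding $\sigma_{i}$ gives $\beta^{(i)} = u\alpha^{(i)} + a$, hence $\beta^{(i)}-\beta^{(j)} = u\bigl(\alpha^{(i)}-\alpha^{(j)}\bigr)$ for all $i\neq j$. The factor $u$ cancels in each ratio defining $\tau$, so $\tau(\alpha) = \tau(\beta)$.

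For the converse, assume $\tau(\alpha) = \tau(\beta)$. Define, a priori in $G$,
\[
u := \frac{\beta^{(2)}-\beta^{(1)}}{\alpha^{(2)}-\alpha^{(1)}}, \qquad a := \beta^{(1)} - u\alpha^{(1)}.
\]
The equality $\tau(\alpha) = \tau(\beta)$ says that for each $i \ge 3$,
\[
\frac{\beta^{(i)}-\beta^{(1)}}{\alpha^{(i)}-\alpha^{(1)}} = \frac{\beta^{(2)}-\beta^{(1)}}{\alpha^{(2)}-\alpha^{(1)}} = u,
\]
and consequently $\beta^{(i)} = u\alpha^{(i)} + a$ for every $i = 1,\ldots,d$. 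The crux is to show $u,a \in L$. Since $\beta \in K = L(\alpha)$, we can write $\beta = f(\alpha)$ for some $f \in L[X]$, so $\beta^{(i)} = f(\alpha^{(i)})$. For any $\rho \in \Gal(G/L)$ there is a permutation $\pi$ with $\rho(\alpha^{(i)}) = \alpha^{(\pi(i))}$, and then automatically $\rho(\beta^{(i)}) = \beta^{(\pi(i))}$. Applying $\rho$ to $\alpha^{(i)} \mapsto u\alpha^{(i)} + a = \beta^{(i)}$ and comparing with the same identity at the index $\pi(i)$ yields
\[
\bigl(u - \rho(u)\bigr)\alpha^{(\pi(i))} = \rho(a) - a \qquad (i=1,\ldots,d).
\]
If $u \ne \rho(u)$, the $\alpha^{(\pi(i))}$ would all coincide, contradicting $L(\alpha)=K$ and $d \ge 3$; hence $\rho(u)=u$ and then $\rho(a)=a$. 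This holding for every $\rho \in \Gal(G/L)$ gives $u,a \in L$, and from $\beta^{(1)} = u\alpha^{(1)} + a$ applied to the identity embedding we obtain $\beta = u\alpha + a$, proving $L$-equivalence.

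For (ii), $A$-equivalence trivially implies $L$-equivalence, which by (i) gives $\tau(\alpha)=\tau(\beta)$. Conversely, if $\tau(\alpha) = \tau(\beta)$ then by (i), $\alpha$ and $\beta$ are $L$-equivalent, and Lemma \ref{L2b} (whose hypotheses $L(\alpha)=L(\beta)=K$ and $A[\alpha]=A[\beta]$ are assumed here) upgrades this to $A$-equivalence. The only nontrivial point in the whole argument is the Galois step in (i), whose success relies precisely on the standing assumption $d\ge 3$, i.e.\ that $\alpha$ has at least three distinct conjugates.
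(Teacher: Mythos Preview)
Your proof is correct and follows essentially the same route as the paper: define $u,a$ from the first two conjugates, verify $\beta^{(i)}=u\alpha^{(i)}+a$ for all $i$ using $\tau(\alpha)=\tau(\beta)$, then use that each $\rho\in\Gal(G/L)$ permutes the $\alpha^{(i)}$ and $\beta^{(i)}$ by the same permutation to conclude $u,a\in L$ by uniqueness; part (ii) is reduced to Lemma~\ref{L2b} exactly as in the paper. One small inaccuracy in your closing remark: the Galois step only needs two distinct conjugates of $\alpha$, so $d\ge 2$ already suffices there, not $d\ge 3$.
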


\begin{proof} (i) If $\alpha ,\beta$ are $ L$-equivalent,
then clearly $\tau (\alpha )=\tau (\beta )$. Assume conversely
that $\tau (\alpha )=\tau (\beta )$. Then there are unique $u\in
G^*$, $a\in G$ such that
\begin{equation}\label{6.5.ua}
(\beta^{(1)},\ldots ,\beta^{(d)})=u(\alpha^{(1)},\ldots
,\alpha^{(d)})+a(1,\ldots ,1).
\end{equation}
In fact, the unicity of $u,a$ follows since thanks to our
assumption $ K = L (\alpha )$, the numbers
$\alpha^{(1)}\kdots\alpha^{(d)}$ are distinct. As for the
existence, observe that \eqref{6.5.ua} is satisfied with
$u=(\beta^{(2)}-\beta^{(1)})/(\alpha^{(2)}-\alpha^{(1)})$,
$a=\beta^{(1)}-u\alpha^{(1)}$.

Take $\sigma$ from the Galois group
$\mbox{Gal}\left( G/L\right)$.
Then $\sigma\circ\sigma_1\kdots\sigma\circ\sigma_d$ is
a permutation of the $L$-isomorphisms
$\sigma_1\kdots\sigma_d:\, K\hookrightarrow G$.
It follows that $\sigma$ permutes
$(\alpha^{(1)},\ldots ,\alpha^{(d)})$ and
$(\beta^{(1)},\ldots ,\beta^{(d)})$ in the same way.
So by applying $\sigma$ to
\eqref{6.5.ua} we obtain
$$
(\beta^{(1)},\ldots ,\beta^{(d)})=\sigma(u)(\alpha^{(1)},\ldots
,\alpha^{(d)})+\sigma(a)(1,\ldots ,1).
$$
By the unicity of $u$,
$a$ in \eqref{6.5.ua} this implies $\sigma(u)=u$, $\sigma(a)=a$.
This holds for every $\sigma\in\mbox{Gal}\left( G/ L\right)$.
So in fact $u\in L^{*}$, $a\in L$, that is, $\alpha$, $\beta$ are
$ L$-equivalent.

(ii) Use Lemma \ref{L2b}.
\end{proof}

We denote by $(a_1\kdots a_r)$ the ideal of $A$ generated by $a_1\kdots a_r$.

\begin{lemma}\label{L7.1}
Let $\alpha ,\beta\in A_K$ with
$L(\alpha )=L(\beta )=K$,
$A[\alpha ]=A[\beta ]$. Suppose there is a matrix
 $\left(\begin{smallmatrix}a_1&a_2\\a_3&a_4\end{smallmatrix}\right)\in\GL (2,L)$
with
\begin{eqnarray}
\label{7.1}
&\displaystyle{\beta =\frac{a_1\alpha +a_2}{a_3\alpha +a_4},\ \ a_3\not= 0,}&
\\
\label{7.2}
&a_1,a_2,a_3,a_4\in A,\ \ (a_1,a_2,a_3,a_4)=(1).&
\end{eqnarray}
Then $\left(\begin{smallmatrix}a_1&a_2\\a_3&a_4\end{smallmatrix}\right)\in\GL (2,A)$.
\end{lemma}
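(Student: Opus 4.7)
I plan to prove $D := a_1a_4 - a_2a_3 \in A^*$ by a local analysis. Since $A$ is a Krull domain and $D \in L^*$ (as $M \in \GL(2,L)$), \eqref{3.4a} reduces the problem to showing $\ord_\fp(D) = 0$ for every $\fp \in \PPP(A)$. Localizing, I may therefore assume $A$ is a DVR with maximal ideal $\fp$ and residue field $k$; the hypothesis $(a_1,a_2,a_3,a_4)=(1)$ then just says not all $a_i$ lie in $\fp$. I will suppose for contradiction $\ord_\fp(D) > 0$.

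A direct calculation using $\beta = (a_1\alpha + a_2)/(a_3\alpha + a_4)$ gives
\[
\beta^{(i)} - \beta^{(j)} \;=\; \frac{D\,(\alpha^{(i)} - \alpha^{(j)})}{(a_3\alpha^{(i)} + a_4)(a_3\alpha^{(j)} + a_4)},
\]
and a second direct calculation yields the key factorization $(a_3\alpha + a_4)(a_3\beta - a_1) = -D$. Combined with Lemma \ref{L2a}(i), the display shows $D/[(a_3\alpha^{(i)}+a_4)(a_3\alpha^{(j)}+a_4)] \in A_G^*$ for all $i \neq j$. Fix a prime $\fP$ of $A_G$ above $\fp$. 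Comparing the resulting valuation equalities for three distinct indices $i,j,k$ (available since $d \geq 3$), I conclude that $\ord_\fP(a_3\alpha^{(i)}+a_4) = n$ is independent of $i$, with $2n = \ord_\fP(D) > 0$. The analogous argument using $\alpha = (-a_4\beta + a_2)/(a_3\beta - a_1)$ gives $\ord_\fP(a_3\beta^{(i)} - a_1) = n$ for all $i$.

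If $a_3 \in \fp$, then $a_3\alpha^{(i)} \in \fP$ combined with $n > 0$ forces $a_4 \in \fP\cap A = \fp$, and symmetrically $a_1 \in \fp$. The GCD condition then forces $a_2 \notin \fp$, so $\ord_\fP(a_1\alpha + a_2) = 0$ while $\ord_\fP(a_3\alpha + a_4) > 0$; this gives $\ord_\fP(\beta) < 0$ and contradicts $\beta \in A_G$.

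In the substantive case $a_3 \in A^*$, set $\bar c := -\bar a_4/\bar a_3 \in k$. The positivity of $\ord_\fP(a_3\alpha^{(i)}+a_4)$ forces $\bar\alpha^{(i)} = \bar c$ in $A_G/\fP$ for every $i$, and since $f \in A[X]$ is monic of degree $d$ this lifts to the polynomial identity $\bar f(X) = (X - \bar c)^d$ in $k[X]$. Hence $\OO/\fp\OO \cong k[X]/\bar f(X) \cong k[Z]/(Z^d)$ with $Z = \bar\alpha - \bar c$; the symmetric argument gives $\bar g(Y) = (Y - \bar c')^d$ with $\bar c' = \bar a_1/\bar a_3$. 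Setting $\alpha' := a_3\alpha + a_4$ and $\beta' := a_3\beta - a_1$, the factorization becomes $\alpha'\beta' = -D \in \fp$, while a short computation yields $\bar\alpha' = \bar a_3 Z$. Expanding $\bar\beta - \bar c' = e_1 Z + e_2 Z^2 + \cdots$ in $k[Z]/(Z^d)$, the fact that $\bar\beta - \bar c'$ is a nilpotent generator of $\OO/\fp\OO$ forces $e_1 \neq 0$, and so $\bar\beta' = \bar a_3(e_1 Z + e_2 Z^2 + \cdots)$. Therefore
\[
0 \;=\; \overline{\alpha'\beta'} \;=\; \bar a_3^{\,2} e_1 Z^2 + O(Z^3) \quad\text{in } k[Z]/(Z^d).
\]
Since $d \geq 3$, $Z^2 \neq 0$ in this ring, so $\bar a_3^{\,2} e_1 = 0$, contradicting $\bar a_3, e_1 \neq 0$. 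The main obstacle is the bookkeeping in this last step: carefully extracting the structure $\OO/\fp\OO \cong k[Z]/(Z^d)$ with both $\bar\alpha'$ and $\bar\beta'$ of the form ``nonzero multiple of $Z$ plus higher order'', which is exactly what makes the hypothesis $d \geq 3$ bite.
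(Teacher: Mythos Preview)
Your argument is correct and takes a genuinely different route from the paper's proof. The paper works globally: it writes $\beta=r_0+r_1\alpha+\cdots+r_{d-1}\alpha^{d-1}$ with $r_j\in A$, derives the polynomial identity
\[
(a_3X+a_4)(r_{d-1}X^{d-1}+\cdots+r_0)-a_1X-a_2=a_3r_{d-1}f(X),
\]
proves by an induction on the coefficients that $a_3^{1-j}r_j\in A$ for all $j$, and then evaluates the homogenized minimal polynomial at $(a_4,-a_3)$ to obtain $F(a_4,-a_3)=s^{-1}D$ with $s\in A$. Feeding this into the discriminant formula yields $D_{K/L}(\beta)/D_{K/L}(\alpha)=s^{2d-2}D^{(d-1)(d-2)}$, and Lemma~\ref{L2a}\,(ii) then forces $D\in A^*$.

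You instead localize to a DVR and argue valuation-theoretically, using Lemma~\ref{L2a}\,(i) (rather than (ii)) to pin down $\ord_\fP(a_3\alpha^{(i)}+a_4)$ uniformly, and then exploit the residue ring structure $\OO/\fp\OO\cong k[Z]/(Z^d)$ together with the identity $\alpha'\beta'=-D$ to extract a contradiction at the $Z^2$ coefficient. The paper's approach is a single explicit global computation, avoiding any localization or case split; your approach trades that for a cleaner conceptual picture (the product of two order-one nilpotents cannot vanish in $k[Z]/(Z^d)$ when $d\geq 3$), and makes transparent exactly where the hypothesis $d\geq 3$ enters, which in the paper is hidden inside the exponent $(d-1)(d-2)$.
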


\noindent
{\bf Remark.}
Let $\OO$ be an $A$-order of type I, as defined in Section \ref{2}.
Then there exist $\alpha ,\beta$ with $\OO =A[\alpha ]=A[\beta ]$,
and a matrix
$U:=\left(\begin{smallmatrix}a_1&a_2\\a_3&a_4\end{smallmatrix}\right)\in\GL (2,L)$
with \eqref{7.1}. If $A$ is a principal ideal domain then by taking a suitable
scalar multiple of $U$ we can arrange that \eqref{7.2} also holds, and thus,
that $U\in\GL (2,A)$.

\begin{proof}
Since $\alpha\in A_K$ and $L(\alpha )=K$,
it has a monic minimal polynomial $f\in A[X]$ over $L$ of degree $d$.
Moreover, since $A[\beta ]= A[\alpha ]$, we have
\begin{equation}\label{7.3}
\beta =r_0+r_1\alpha +\cdots +r_{d-1}\alpha^{d-1}\ \mbox{with } r_0\kdots r_{d-1}\in A.
\end{equation}
Hence
\begin{equation}\label{7.4}
(a_3X+a_4)(r_{d-1}X^{d-1}+\cdots +r_0)-a_1X-a_2 =a_3r_{d-1}f(X).
\end{equation}
Equating the coefficients, we see that
\begin{eqnarray}\label{7.5}
&a_4r_0-a_2\in a_3r_{d-1} A,\ \ \
a_4r_1+a_3r_0-a_1\in a_3r_{d-1}A,&
\\
\label{7.6}
&a_4r_j+a_3r_{j-1}\in a_3r_{d-1}A\ (j=2\kdots d-1).&
\end{eqnarray}
We first prove that
\begin{equation}\label{7.7}
a_3^{1-j}r_j\in A\ \ \mbox{for } j=1\kdots d-1.
\end{equation}
In fact, we prove by induction on $i$ ($1\leq i\leq d-1$), the assertion
that $a_3^{1-j}r_j\in A$ for $j=1\kdots i$, and $a_3^{1-i}r_j\in A$
for $j=i+1\kdots d-1$. For $i=1$ this is clear. Let $2\leq i\leq d-1$
and suppose that the assertion is true for $i-1$ instead of $i$.
Then $s_j:= a_3^{2-i}r_j\in A$ for $j=i\kdots d-1$. Further,
by \eqref{7.6}, we have $a_4s_j+a_3s_{j-1}=z_ja_3s_{d-1}$ with $z_j\in A$
for $j=i\kdots d-1$. Next, by \eqref{7.5}
we have $a_1,a_2\in (a_3,a_4)$, and then $(a_3,a_4)=(1)$ by \eqref{7.2}.
That is, there are $x,y\in A$ with $xa_3+ya_4=1$. Consequently,
for $j=i\kdots d-1$, we have
\[
s_j=(xa_3+ya_4)s_j=a_3(xs_j+y(z_js_{d-1}-s_{j-1}))\in a_3A,
\]
i.e., $a_3^{1-i}r_j=a_3^{-1}s_j\in A$.
This completes our induction step, and completes
the proof of \eqref{7.7}.

Now define the binary form $F(X,Y):= Y^df(X/Y)$. Then \eqref{7.4} implies
\[
a_3r_{d-1}F(X,Y)= (a_3X+a_4Y)(\cdots )-Y^{d-1}(a_1X+a_2Y).
\]
Substituting $X=a_4,\,Y=-a_3$, and using \eqref{7.7}, it follows that
\begin{equation}\label{7.8}
F(a_4,-a_3)= s^{-1}(a_1a_4-a_2a_3)\ \mbox{with } s\in A.
\end{equation}

Denote by $\alpha^{(1)}\kdots \alpha^{(d)}$ the conjugates of $\alpha$,
and by $\beta^{(1)}\kdots \beta^{(d)}$ the corresponding conjugates of $\beta$.
Then for the discriminant of $\beta$ we have, by \eqref{7.1}, \eqref{7.8},
\begin{eqnarray*}
&&D_{K/L}(\beta )=\prod_{1\leq i<j\leq d} (\beta^{(i)}-\beta^{(j)})^2
\\
&&\quad =(a_1a_4-a_2a_3)^{d(d-1)}
\left(\prod_{i=1}^d (a_4+a_3\alpha^{(i)})\right)^{-2d+2}\prod_{1\leq i<j\leq d}(\alpha^{(i)}-\alpha^{(j)})^2
\\
&&\quad = (a_1a_4-a_2a_3)^{d(d-1)}F(a_4,-a_3)^{-2d+2}D_{K/L}(\alpha )
\\
&&\quad =
s^{2d-2} (a_1a_4-a_2a_3)^{(d-1)(d-2)}D_{K/L}(\alpha ).
\end{eqnarray*}
On the other hand, by Lemma \ref{L2a}, (ii) we have
$D_{K/L}(\beta )/D_{K/L}(\alpha )\in A^*$. Using also that
$A$ is integrally closed, it follows that
$a_1a_4-a_2a_3\in A^*$.
This completes our proof.
\end{proof}

\section{Proof of Theorem \ref{T2.1}}\label{5}

\setcounter{equation}{0}

The proof splits into two parts.
Consider $\beta\in A_K$ with $K=L(\beta )$.
The first part, which is Lemma \ref{L4} below, implies
that the set of $\beta$ such that
$A[\beta ]$ is three times monogenic, is contained in a
union of at most finitely many $L$-equivalence classes.
The second part, which is Lemma \ref{L5} below, implies that
if $\mathcal{C}$ is a given $L$-equivalence class, then the
set of $\beta\in\mathcal{C}$ such that
$A[\beta ]$ is two times monogenic, is in a union of at most finitely
many $A$-equivalence classes.   
(Lemma \ref{L5} is used 
in the proof of Theorem \ref{T2.2} as well, therefore it deals with two times
monogenic orders.)
Any three times monogenic $A$-order in $K$ can be expressed as $A[\beta ]$.
A combination of Lemmas \ref{L4} and \ref{L5} clearly yields that the set
of such $\beta$ lies in finitely
many $A$-equivalence classes. Since $A$-equivalent $\beta$
give rise to equal $A$-orders $A[\beta ]$, there are only
finitely many three times monogenic orders in $K$.

\begin{lemma}\label{L4}
The set of $\beta$ such that
\begin{equation}\label{6.6.4}
\beta\in A_{ K},\  L (\beta )= K,\ \  A[\beta ]\ \mbox{is three times monogenic}
\end{equation}
is contained in a union of at most finitely many $L$-equivalence classes.
\end{lemma}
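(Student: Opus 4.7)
The plan is to show that $\tau(\beta)$ takes only finitely many values, which by Lemma \ref{L3}(i) is equivalent to $\beta$ running through finitely many $L$-equivalence classes. First I would exploit the hypothesis that $A[\beta]$ is three times monogenic: there exist $\alpha_2, \alpha_3 \in A_K$ with $A[\alpha_m] = A[\beta]$ for $m = 1,2,3$ (writing $\alpha_1 := \beta$), the three $\alpha_m$ being pairwise $A$-inequivalent. By Lemma \ref{L2b} they are then pairwise $L$-inequivalent, and so by Lemma \ref{L3}(i) the tuples $\tau(\alpha_1), \tau(\alpha_2), \tau(\alpha_3)$ are pairwise distinct. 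The group $\Gamma := A_G^*$ is finitely generated by Lemma \ref{L-finitely-generated}.

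Next, for each ordered triple $(p, q, r)$ of distinct indices in $\{1, \ldots, d\}$, I set
\[
a_{pqr}(\gamma) := \frac{\gamma^{(r)} - \gamma^{(p)}}{\gamma^{(q)} - \gamma^{(p)}}, \qquad b_{pqr}(\gamma) := \frac{\gamma^{(q)} - \gamma^{(r)}}{\gamma^{(q)} - \gamma^{(p)}},
\]
so that $a_{pqr}(\gamma) + b_{pqr}(\gamma) = 1$ identically. By Lemma \ref{L2a}(i) each ratio $a_{pqr}(\alpha_m)/a_{pqr}(\beta)$ and $b_{pqr}(\alpha_m)/b_{pqr}(\beta)$ lies in $\Gamma$. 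Writing $A := a_{pqr}(\beta)$, $B := b_{pqr}(\beta)$, the identities for $m = 1, 2, 3$ give three solutions in $\Gamma^2$ of the normalized unit equation
\[
A X + B Y = 1, \qquad X, Y \in \Gamma,
\]
with the $m = 1$ solution being $(1, 1)$. Lemma \ref{L2} then provides a finite exceptional set $\mathcal{E} \subset (G^*)^2$ of normalized pairs such that for $(A, B) \notin \mathcal{E}$ the equation has at most two solutions in $\Gamma$. For each triple I thus obtain the dichotomy: either $a_{pqr}(\beta)$ lies in the finite set $\{a \in G^* : (a, 1-a) \in \mathcal{E}\}$, or two of $a_{pqr}(\alpha_1), a_{pqr}(\alpha_2), a_{pqr}(\alpha_3)$ coincide.

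The combinatorial endgame is to apply this dichotomy first to the triples $(1, 2, r)$ for $r = 3, \ldots, d$, for which $a_{12r} = \tau_r$: if the first alternative holds for every such $r$, then each $\tau_r(\beta)$ lies in a finite set and we are done. For those $r$'s where the second alternative holds I would appeal to auxiliary triples $(p, q, r)$ with $\{p, q\} \neq \{1, 2\}$; since the $\tau(\alpha_m)$ are pairwise distinct, for each ``bad'' $r$ one can locate a triple whose three values $a_{pqr}(\alpha_m)$ are pairwise distinct, forcing $a_{pqr}(\beta)$---which is a specific rational function of $\tau(\beta)$---into a finite set. Combining these rational-function constraints with the partial information on the other $\tau_r(\beta)$'s should pin $\tau(\beta)$ down to finitely many possibilities.

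The hard part will be this combinatorial bookkeeping: a priori the coincidences $a_{pqr}(\alpha_m) = a_{pqr}(\alpha_{m'})$ may occur simultaneously across many triples and might conspire to block the direct argument. The main technical obstacle is therefore to verify that, whatever the pattern of coincidences on the $\alpha_m$-side, enough triples yield the ``three distinct'' case for the resulting collection of rational-function constraints to determine $\tau(\beta)$ up to a finite set; this is precisely the sort of combinatorial argument flagged in the introduction of the paper.
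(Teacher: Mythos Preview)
Your setup matches the paper's: the same cross ratios, the same application of Lemma~\ref{L2} to the normalized equation $a_{pqr}(\beta)X+b_{pqr}(\beta)Y=1$, and the same dichotomy (either $a_{pqr}(\beta)$ lies in a fixed finite set, or two of the three values $a_{pqr}(\alpha_m)$ coincide). For $d=3$ your sketch is already a complete proof, since then $\tau(\alpha_m)=(a_{123}(\alpha_m))$ and a coincidence would force two of the $\alpha_m$ to be $A$-equivalent.

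The genuine gap is in your ``combinatorial endgame'' for $d\geq 4$. Your claim that ``for each bad $r$ one can locate a triple whose three values $a_{pqr}(\alpha_m)$ are pairwise distinct'' is not justified, and in fact the paper's proof shows precisely that this need \emph{not} happen: for a fixed $\beta$ it is entirely possible that every triple $(p,q,r)$ exhibits a coincidence among $a_{pqr}(\alpha_1),a_{pqr}(\alpha_2),a_{pqr}(\alpha_3)$. Pairwise distinctness of the full tuples $\tau(\alpha_m)$ does not propagate to any single coordinate or any single cross ratio. Even if you could locate such triples, you would only learn that certain rational functions of $\tau(\beta)$ lie in finite sets; which functions these are depends on the coincidence pattern, which varies with $\beta$, so you do not get a uniform finiteness statement.

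The paper resolves this by arguing by contradiction with an infinite sequence $\beta_{1p}$ ($p=1,2,\ldots$) of pairwise $L$-inequivalent $\beta$'s. The crucial device is the collection $\mathcal{S}$ of $3$-element subsets $\{i,j,k\}$ for which $\beta_{hp}^{(ijk)}$ runs through infinitely many values as $p\to\infty$; one shows $\mathcal{S}$ is nonempty, symmetric in $h\in\{1,2,3\}$, and satisfies the closure property $\{i,j,k\}\in\mathcal{S}\Rightarrow\{i,j,l\}\in\mathcal{S}$ or $\{i,k,l\}\in\mathcal{S}$. By repeated passage to subsequences one then pins down three specific relations of the form $\beta_{hp}^{(ijk)}=\beta_{h'p}^{(ijk)}\neq\beta_{h''p}^{(ijk)}$ on three overlapping triples inside $\{1,2,3,4\}$, and a direct computation shows these are jointly incompatible. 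This infinite-sequence/pigeonhole structure is not optional: it is what allows one to stabilize the coincidence pattern and reduce to a single concrete contradiction. Your proposal correctly anticipates that this is where the difficulty lies, but does not supply the mechanism.
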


\begin{proof}
Assume the contrary. Then there is an infinite sequence of triples
$\{ (\beta_{1p},\beta_{2p},\beta_{3p}):\, p=1,2,\ldots\}$
such that
\begin{equation}\label{5.101}
\beta_{hp}\in A_K,\ L(\beta_{hp})=K\ \mbox{for } h=1,2,3,\ p=1,2,\ldots ;
\end{equation}
\begin{equation}\label{5.102}
\beta_{1p}\ (p=1,2,\ldots )\ \mbox{lie in different $L$-equivalence classes}
\end{equation}
and for $p=1,2,\ldots$ ,
\begin{equation}\label{5.103}
\left\{
\begin{array}{l}
A[\beta_{1p}]=A[\beta_{2p}]=A[\beta_{3p}],\\
\mbox{$\beta_{1p},\beta_{2p},\beta_{3p}$ lie in different
$A$-equivalence classes}
\end{array}\right.
\end{equation}
(so the $\beta_{1p}$ play the role of $\beta$ in the statement of our lemma).
For any three distinct indices $i,j,k$ from $\{ 1\kdots d\}$, and for $h=1,2,3$,
$p=1,2,\ldots$, put
\[
\beta_{hp}^{(ijk)}:=\frac{\beta_{hp}^{(i)}-\beta_{hp}^{(j)}}
{\beta_{hp}^{(i)}-\beta_{hp}^{(k)}}.
\]
By \eqref{5.101}, these numbers are well-defined and non-zero.

We start with some observations.
Let $i,j,k$ be any three
distinct indices from $\{ 1\kdots d\}$.
By Lemma \ref{L2a} and the obvious identities
$\beta_{hp}^{(ijk)}+\beta_{hp}^{(kji)}=1$ ($h=1,2,3$), the pairs
$(\beta_{hp}^{(ijk)}/\beta_{1p}^{(ijk)},\beta_{hp}^{(kji)}/\beta_{1p}^{(kji)})$ ($h=1,2,3$)
are solutions to
\begin{equation}\label{5.104}
\beta_{1p}^{(ijk)}x\, + \beta_{1p}^{(kji)}y\, =1\ \ \mbox{in } x,y\in A_G^*.
\end{equation}
Notice that \eqref{5.104} has solution $(1,1)$. So according to
Lemmas \ref{L2}, \ref{L-finitely-generated}, there is a finite set $\mathcal{A}_{ijk}$ such that
if $\beta_{1p}^{(ijk)}\not\in\mathcal{A}_{ijk}$, then \eqref{5.104}
has at most two solutions, including $(1,1)$.
In particular,
there are at most two distinct pairs among
$(\beta_{hp}^{(ijk)}/\beta_{1p}^{(ijk)},\beta_{hp}^{(kji)}/\beta_{1p}^{(kji)})$
$(h=1,2,3)$. Consequently,
\begin{equation}\label{5.108}
\beta_{1p}^{(ijk)}\not\in\mathcal{A}_{ijk}\Longrightarrow
\mbox{two among $\beta_{1p}^{(ijk)},\beta_{2p}^{(ijk)},\beta_{3p}^{(ijk)}$ are equal.}
\end{equation}

We start with the case $d=3$. Then $\tau (\beta_{hp})=(\beta_{hp}^{(132)})$
for $h=1,2,3$. By \eqref{5.102} and Lemma \ref{L3},(i) the numbers
$\beta_{1p}^{(132)}$ ($p=1,2,\ldots$) are pairwise distinct.
By \eqref{5.108} and Lemma \ref{L3},(ii), for all but finitely many $p$, two among the
numbers $\beta_{hp}^{(132)}$ ($h=1,2,3$) are equal and hence
two among $\beta_{hp}$ ($h=1,2,3$) are $A$-equivalent which contradicts
\eqref{5.103}.

Now assume $d\geq 4$.
We have to distinguish between subsets $\{i,j,k\}$
of $\{ 1\kdots d\}$
and indices $h$ for which there are infinitely many distinct
numbers among $\beta_{hp}^{(ijk)}$ ($p=1,2,\ldots$),
and $\{i,j,k\}$ and $h$ for which among these numbers there are only finitely many
distinct ones.
This does not depend on the choice of ordering of $i,j,k$, since any permutation
of $(i,j,k)$ transforms $\beta_{hp}^{(ijk)}$ into one of
$(\beta_{hp}^{(ijk)})^{-1}$,
$1-\beta_{hp}^{(ijk)}$, $(1-\beta_{hp}^{(ijk)})^{-1}$,
$1-(\beta_{hp}^{(ijk)})^{-1}$, $(1-(\beta_{hp}^{(ijk)})^{-1})^{-1}$.

There is a subset $\{i,j,k\}$ of $\{ 1\kdots d\}$
such that there are infinitely many distinct numbers among
$\beta_{1p}^{(ijk)}$ ($p=1,2,\ldots$).
Indeed, if this were not the case, then there would be only finitely many
distinct tuples among
$\tau (\beta_{1p})=(\beta_{1p}^{(132)}\kdots \beta_{1p}^{(1d2)})$,
and then by Lemma \ref{L3},(i) the numbers $\beta_{1p}$ would lie in
only finitely many $L$-equivalence classes, contradicting \eqref{5.102}.
There is an infinite subsequence of indices $p$
such that the numbers $\beta_{1p}^{(ijk)}$ are pairwise distinct.
Suppose there is another subset $\{i',j',k'\}\not=\{ i,j,k\}$
such that if $p$ runs through the infinite subsequence just chosen,
then $\beta_{1p}^{(i'j'k')}$ runs through an infinite set. Then
for some infinite subsequence of these $p$,
the numbers $\beta_{1p}^{(i'j'k')}$ are pairwise distinct.
Continuing in this way, we infer that there is a non-empty collection $\mathcal{S}$
of $3$-element subsets $\{i,j,k\}$ of $\{ 1\kdots d\}$,
and an infinite sequence $\mathcal{P}$ of indices $p$,
such that for each $\{i,j,k\}\in\mathcal{S}$ the numbers $\beta_{1p}^{(ijk)}$
($p\in\mathcal{P}$)
are pairwise distinct, while for each $\{i,j,k\}\not\in\mathcal{S}$,
there are only
finitely many distinct elements among $\beta_{1p}^{(ijk)}$ ($p\in\mathcal{P}$).

Notice that if $\{ i,j,k\}\not\in\mathcal{S}$, then
among the equations \eqref{5.104} with $p\in\mathcal{P}$,
there are only finitely
many distinct ones,
and by Lemmas \ref{L1}, \ref{L-finitely-generated}, each of these
equations has only finitely many solutions.
Therefore, there are only finitely many distinct numbers among
$\beta_{hp}^{(ijk)}/\beta_{1p}^{(ijk)}$ hence only finitely many among
$\beta_{hp}^{(ijk)}$ ($h=2,3$, $p\in\mathcal{P}$).
Conversely, if $\{i,j,k\}\in\mathcal{S}$, $h\in\{ 2,3\}$, there are
infinitely many distinct numbers among $\beta_{hp}^{(ijk)}$
($p\in\mathcal{P}$). For if not, then by the same argument,
interchanging the roles of $\beta_{hp}$, $\beta_{1p}$, it would follow
that there are only finitely many distinct numbers among
$\beta_{1p}^{(ijk)}$ ($p\in\mathcal{P}$),
contradicting $\{i,j,k\}\in\mathcal{S}$.

We conclude that there is an infinite subsequence of $p$,
which after renaming we may assume to be $1,2,\ldots$, such that
for $h=1,2,3$,
\begin{equation}\label{5.106}
\beta_{hp}^{(ijk)}\ (p=1,2,\ldots )\ 
\mbox{are pairwise distinct if $\{ i,j,k\}\in\mathcal{S}$,}
\end{equation}
\begin{equation}\label{5.107}
\begin{array}{l}
\mbox{there are only finitely many distinct numbers among}
\\
\beta_{hp}^{(ijk)}\ (p=1,2,\ldots )\ \mbox{if } \{i,j,k\}\not\in\mathcal{S}.
\end{array}
\end{equation}
Notice that this characterization of $\mathcal{S}$ is symmetric
in $\beta_{hp}$ ($h=1,2,3$);
this will be used frequently.

We frequently use the following property of $\mathcal{S}$:
if $i,j,k,l$ are any four distinct indices from $\{ 1\kdots d\}$,
then
\begin{equation}\label{5.107b}
\{i,j,k\}\in\mathcal{S}\Longrightarrow \{i,j,l\}\in\mathcal{S}\ \mbox{or }
\{i,k,l\}\in\mathcal{S}.
\end{equation}
Indeed, if $\{i,j,l\},\{i,k,l\}\not\in\mathcal{S}$
then also $\{i,j,k\}\not\in\mathcal{S}$ since
$\beta_{hp}^{(ijk)}=\beta_{hp}^{(ijl)}/\beta_{hp}^{(ikl)}$.

Pick a set from $\mathcal{S}$, which without loss of generality
we may assume to be $\{1,2,3\}$. By \eqref{5.107b},
for $k=4\kdots d$ at least one of the sets $\{1,2,k\}$, $\{1,3,k\}$
belongs to $\mathcal{S}$. Define the set of pairs
\begin{equation}\label{5.109}
\mathcal{C}:=\Big\{ (j,k):\, j\in\{ 2,3\},\, k\in\{ 3\kdots d\},\, j\not= k ,\, \{1,j,k\}\in\mathcal{S}\Big\}.
\end{equation}
Thus, for each $k\in\{ 3\kdots d\}$ there is $j$ with $(j,k)\in\mathcal{C}$.
Further, for every $p=1,2,\ldots$ there is a pair $(j,k)\in\mathcal{C}$ such that
\[
\beta_{1p}^{(1jk)}\not=\beta_{2p}^{(1jk)}.
\]
Indeed, if this were not the case, then since
$\beta_{hp}^{(12k)}=\beta_{hp}^{(13k)}\beta_{hp}^{(123)}$,
it would follow that for some $p$,
\[
\beta_{1p}^{(12k)}=\beta_{2p}^{(12k)}\ \mbox{for } k=3\kdots d,
\]
and then $\tau (\beta_{1p})=\tau (\beta_{2p})$.
Together with Lemma \ref{L3},(ii) this would imply that
$\beta_{1p}$, $\beta_{2p}$ are $A$-equivalent, contrary to
\eqref{5.103}. Clearly, there is a pair $(j,k)\in\mathcal{C}$
such that $\beta_{1p}^{(1jk)}\not=\beta_{2p}^{(1jk)}$ for infinitely many $p$.
After interchanging the indices $2$ and $3$ if $j=3$  and then permuting
the indices $3\kdots d$, which does not affect the above argument, 
we may assume that $j=2,k=3$.
That is, we may assume that $\{1,2,3\}\in\mathcal{S}$ and
\[
\beta_{1p}^{(123)}\not=\beta_{2p}^{(123)}\ \mbox{for infinitely many $p$.}
\]
We now bring \eqref{5.108} into play. It implies that for
infinitely many $p$ we have
$\beta_{3p}^{(123)}\in\{ \beta_{1p}^{(123)},\beta_{2p}^{(123)}\}$.
After interchanging $\beta_{1p}$, $\beta_{2p}$
(which does not affect the definition of $\mathcal{S}$ or the above
arguments) we may assume that $\{1,2,3\}\in\mathcal{S}$
and
\begin{equation}\label{5.110}
\beta_{1p}^{(123)}=\beta_{3p}^{(123)}\not=\beta_{2p}^{(123)}
\end{equation}
for infinitely many $p$.

We repeat the above argument.
After renaming again, we may assume that the above infinite sequence of indices
$p$ for which \eqref{5.110} is true is $p=1,2,\ldots\,$,
and thus, \eqref{5.106} and \eqref{5.107} are true again.
Define again the set
$\mathcal{C}$ by \eqref{5.109}.
Similarly as above, we conclude that there is a pair $(j,k)\in\mathcal{C}$
such that among $p=1,2,\ldots$
there is an infinite subset with
$\beta_{1p}^{(1jk)}\not=\beta_{3p}^{(1jk)}$. Then necessarily, $k\not= 3$.
After interchanging $2$ and $3$ if $j=3$ (which does not affect
\eqref{5.110}) and rearranging the other indices $4\kdots d$, we may assume
that $j=2$, $k=4$. Thus, $\{1,2,3\},\{1,2,4\}\in\mathcal{S}$
and there are infinitely many $p$ for which we have
\eqref{5.110} and
\[
\beta_{1p}^{(124)}\not=\beta_{3p}^{(124)}.
\]
By \eqref{5.108}, for all but finitely many of these $p$ we have
$\beta_{2p}^{(124)}\in\{ \beta_{1p}^{(124)},\beta_{3p}^{(124)}\}$.
After interchanging $\beta_{1p},\beta_{3p}$ if necessary,
which does not affect \eqref{5.110}, we may conclude that
$\{1,2,3\},\{1,2,4\}\in\mathcal{S}$ and there are infinitely many $p$ with \eqref{5.110}
and
\begin{equation}\label{5.111}
\beta_{1p}^{(124)}=\beta_{2p}^{(124)}\not= \beta_{3p}^{(124)}.
\end{equation}

Next, by \eqref{5.107b}, at least one of $\{1,3,4\}$, $\{2,3,4\}$ belongs to $\mathcal{S}$.
Relations \eqref{5.110}, \eqref{5.111} remain unaffected if we interchange
$\beta_{hp}^{(1)}$ and $\beta_{hp}^{(2)}$,
so without loss of generality, we may assume that $\{1,3,4\}\in\mathcal{S}$.
By \eqref{5.108}, for all but finitely many of the $p$ with
\eqref{5.110} and \eqref{5.111}, at least two among the numbers
$\beta_{hp}^{(134)}$ ($h=1,2,3$) must be equal. Using \eqref{5.110}, \eqref{5.111}
and $\beta_{hp}^{(134)}=\beta_{hp}^{(124)}/\beta_{hp}^{(123)}$,
it follows that $\{1,2,3\},\{1,2,4\},\{1,3,4\}\in\mathcal{S}$ and for infinitely many $p$
we have \eqref{5.110},\eqref{5.111} and
\begin{equation}\label{5.112}
\beta_{2p}^{(134)}=\beta_{3p}^{(134)}\not= \beta_{1p}^{(134)}.
\end{equation}

We now show that this is impossible. For convenience we introduce the notation
\[
\tilde{\beta}_{hp}^{(i)}:=
\frac{\beta_{hp}^{(i)}-\beta_{hp}^{(4)}}{\beta_{hp}^{(3)}-\beta_{hp}^{(4)}} =\beta_{hp}^{(4i3)}
\]
for $h=1,2,3$, $i=1,2,3,4$, $p=1,2,\ldots$. Notice that $\tilde{\beta}_{hp}^{(3)}=1$,
$\tilde{\beta}_{hp}^{(4)}=0$, and
$\beta_{hp}^{(ijk)}=
\frac{\tilde{\beta}_{hp}^{(i)}-\tilde{\beta}_{hp}^{(j)}}{\tilde{\beta}_{hp}^{(i)}-\tilde{\beta}_{hp}^{(k)}}$
for any distinct $i,j,k\in\{ 1,2,3,4\}$. Thus,
\eqref{5.110}--\eqref{5.112} translate into
\begin{eqnarray}
\label{5.113}
&&\frac{\tilde{\beta}_{1p}^{(1)}-
\tilde{\beta}_{1p}^{(2)}}{\tilde{\beta}_{1p}^{(1)}-1}=
\frac{\tilde{\beta}_{3p}^{(1)}-
\tilde{\beta}_{3p}^{(2)}}{\tilde{\beta}_{3p}^{(1)}-1}\not=
\frac{\tilde{\beta}_{2p}^{(1)}
-\tilde{\beta}_{2p}^{(2)}}{\tilde{\beta}_{2p}^{(1)}-1},
\\
\label{5.114}
&&\frac{\tilde{\beta}_{1p}^{(1)}-
\tilde{\beta}_{1p}^{(2)}}{\tilde{\beta}_{1p}^{(1)}}=
\frac{\tilde{\beta}_{2p}^{(1)}-
\tilde{\beta}_{2p}^{(2)}}{\tilde{\beta}_{2p}^{(1)}}\not=
\frac{\tilde{\beta}_{3p}^{(1)}-
\tilde{\beta}_{3p}^{(2)}}{\tilde{\beta}_{3p}^{(1)}},
\\
\label{5.115}
&&\frac{\tilde{\beta}_{2p}^{(1)}-1}{\tilde{\beta}_{2p}^{(1)}}=
\frac{\tilde{\beta}_{3p}^{(1)}-1}{\tilde{\beta}_{3p}^{(1)}}\not=
\frac{\tilde{\beta}_{1p}^{(1)}-1}{\tilde{\beta}_{1p}^{(1)}}.
\end{eqnarray}

We distinguish between the cases $\{ 2,3,4\}\in\mathcal{S}$
and $\{ 2,3,4\}\not\in\mathcal{S}$.

First suppose that $\{ 2,3,4\}\in\mathcal{S}$.
Then by \eqref{5.108}, there are infinitely many $p$ such that
\eqref{5.113}--\eqref{5.115} hold and at least two among
$\tilde{\beta}_{hp}^{(2)}=\beta_{hp}^{(423)}$
($h=1,2,3$) are equal.
But this is impossible, since \eqref{5.113},\eqref{5.114} imply $\tilde{\beta}_{1p}^{(2)}\not=\tilde{\beta}_{2p}^{(2)}$;
\eqref{5.113},\eqref{5.115} imply $\tilde{\beta}_{1p}^{(2)}\not=\tilde{\beta}_{3p}^{(2)}$;
and \eqref{5.114},\eqref{5.115} imply $\tilde{\beta}_{2p}^{(2)}\not=\tilde{\beta}_{3p}^{(2)}$.

Hence $\{ 2,3,4\}\not\in\mathcal{S}$.
This means that there are only finitely many distinct numbers among
$\tilde{\beta}_{hp}^{(2)}=\beta_{hp}^{(423)}$, ($h=1,2,3,\, p=1,2,\ldots$).
It follows that there are (necessarily non-zero) constants $c_1,c_2,c_3$
such that $\tilde{\beta}_{hp}^{(2)}=c_h$ for $h=1,2,3$ and infinitely many $p$.
By \eqref{5.115}, \eqref{5.114}, respectively, we have for all these $p$
that $\tilde{\beta}_{2p}^{(1)}=\tilde{\beta}_{3p}^{(1)}$ and
$\tilde{\beta}_{2p}^{(1)}=(c_2/c_1)\tilde{\beta}_{1p}^{(1)}$.
By substituting this into \eqref{5.113}, we get
\[
\frac{\tilde{\beta}_{1p}^{(1)}-c_1}{\tilde{\beta}_{1p}^{(1)}-1}=
\frac{c_2\tilde{\beta}_{1p}^{(1)}-c_1c_3}{c_2\tilde{\beta}_{1p}^{(1)}-c_1}.
\]
By \eqref{5.113}, \eqref{5.115} we have $c_1\not=c_3$,
hence
\[
\tilde{\beta}^{(1)}_{1p}=\beta_{1p}^{(413)}=\frac{c_1(c_1-c_3)}{c_1c_2+c_1-c_2-c_1c_3}
\]
is a constant independent of $p$.
But this contradicts $\{1,3,4\}\in\mathcal{S}$ and \eqref{5.106}.

So our assumption that Lemma \ref{L4} is false
leads in all cases to a contradiction.
This completes our proof.
\end{proof}

\begin{lemma}\label{L5}
Let $\mathcal{C}$ be an $ L$-equivalence class in $K$. Then the
set of $\beta$ such that
\begin{equation}\label{5.200}
\beta\in A_{ K}\cap\mathcal{C},\ L(\beta )=K,\ \ \ A[\beta ]\ \mbox{is two times monogenic}
\end{equation}
is contained in a union of at most finitely many $A$-equivalence classes.
\end{lemma}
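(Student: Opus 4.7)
The plan is to imitate the opening moves of the proof of Lemma~\ref{L4}. For each $\beta$ in the set, two-times monogenicity of $A[\beta]$ supplies a partner $\gamma\in A_K$ with $L(\gamma)=K$, $A[\gamma]=A[\beta]$, and $\gamma$ not $A$-equivalent to $\beta$ (hence, by Lemma~\ref{L2b}, not $L$-equivalent to $\beta$). Fix an integral representative $\alpha\in A_K\cap\mathcal{C}$ with $L(\alpha)=K$ (if none exists the claim is vacuous) and write $\beta=u\alpha+a$ uniquely with $u\in L^*$, $a\in L$. Since $\beta^{(i)}-\beta^{(j)}=u(\alpha^{(i)}-\alpha^{(j)})$, Lemma~\ref{L2a}(i) yields for any distinct $i,j,k$ that
\[
\Big(\frac{w_{ijk}}{v^0_{ijk}},\,\frac{w_{kji}}{v^0_{kji}}\Big)\in (A_G^*)^2,
\]
where $w_{ijk}=(\gamma^{(i)}-\gamma^{(j)})/(\gamma^{(i)}-\gamma^{(k)})$ and $v^0_{ijk}=(\alpha^{(i)}-\alpha^{(j)})/(\alpha^{(i)}-\alpha^{(k)})$; together with $w_{ijk}+w_{kji}=1$ this is a solution of the unit equation $v^0_{ijk}x+v^0_{kji}y=1$. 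By Lemma~\ref{L1} (with $\Gamma=A_G^*$, finitely generated by Lemma~\ref{L-finitely-generated}), $\tau(\gamma)$ takes only finitely many values, so Lemma~\ref{L3}(i) confines $\gamma$ to finitely many $L$-equivalence classes.

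Fix one such $L$-class with integral representative $\gamma_0$, $L(\gamma_0)=K$, and write $\gamma=u'\gamma_0+a'$ with $u'\in L^*$, $a'\in L$. Lemma~\ref{L2a}(i) applied to $(i,j)=(1,2)$ forces $u'/u$ into a single coset $\lambda A^*$ of $A^*=A_G^*\cap L^*$ in $L^*$. To pin down $u$ itself modulo $A^*$, expand $\gamma_0=\sum_{k=0}^{d-1}c_k\alpha^k$ in the $L$-basis $\{1,\alpha,\ldots,\alpha^{d-1}\}$ of $K$; because $\gamma_0$ is not $L$-equivalent to $\alpha$, the largest $m$ with $c_m\neq 0$ satisfies $m\geq 2$. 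Writing $\gamma=P(\beta)$ with $P\in A[X]$ of degree $<d$ and equating the coefficients of $\alpha^{d-1},\alpha^{d-2},\ldots,\alpha^m$ in the two expansions of $\gamma$ gives inductively $p_i=0$ for $i>m$ and $p_m u^m=u'c_m$, so $p_m=\lambda\epsilon c_m/u^{m-1}\in A$ for some $\epsilon\in A^*$. This yields
\[
(m-1)\,\ord_\fp(u)\,\leq\,\ord_\fp(\lambda c_m)\qquad(\fp\in\PPP(A)),
\]
while $D_{K/L}(\beta)=u^{d(d-1)}D_{K/L}(\alpha)\in A$ supplies the lower bound $\ord_\fp(u)\geq -\ord_\fp(D_{K/L}(\alpha))/d(d-1)$. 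Since both bounds vanish outside a finite set of $\fp$, $u$ lies in finitely many cosets of $A^*$ in $L^*$.

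Finally, for any fixed $u\in L^*$, if $a_1,a_2\in L$ both satisfy $u\alpha+a_i\in A_K$ then $a_1-a_2\in A_K\cap L=A$ (using that $A$ is integrally closed in $L$), so $a$ is determined modulo $A$; hence each $A^*$-coset of $u$ supports at most one $A$-equivalence class of $\beta$. Combining the estimates of the preceding paragraphs then yields finitely many $A$-equivalence classes of $\beta$. The main obstacle is the second paragraph: one must identify the correct leading index $m$ in the $\alpha$-expansion of $\gamma_0$ and combine polynomial-coefficient integrality with the Krull-valuation structure of $A$, since $A$ need not be a PID and integrality has to be verified valuation by valuation.
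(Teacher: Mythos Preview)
Your proof is correct and follows essentially the same strategy as the paper's: show the partners $\gamma$ fall into finitely many $L$-equivalence classes via a two-term unit equation, then fix representatives and use the leading coefficient of the polynomial relation together with the discriminant identity to bound $\ord_{\fp}(u)$ on both sides, concluding finiteness via \eqref{3.4a}.

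The only notable difference is in the choice of base points. The paper selects a concrete pair $(\beta_0,\alpha_0)$ with $A[\alpha_0]=A[\beta_0]$, which immediately gives $u'/u\in A^*$ through Lemma~\ref{L2a}(ii) and makes the coefficients $a_m$ lie in $A$; it must then dispose of the possibility $m=1$ by a separate contradiction. You instead take an arbitrary integral representative $\alpha$ of $\mathcal{C}$ and an arbitrary integral representative $\gamma_0$ of the partner class, which need not satisfy $A[\alpha]=A[\gamma_0]$; this costs you only that $u'/u$ lands in a fixed coset $\lambda A^*$ (via Lemma~\ref{L2a}(i)) and $c_m\in L$ rather than $A$, but it buys the observation that $m\geq 2$ is automatic because $\gamma_0$ and $\alpha$ are not $L$-equivalent. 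Both routes lead to the same two-sided valuation bound on $u$, so the arguments are genuinely equivalent rewritings of one another.
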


\noindent
{\bf Remark.} As mentioned before,
Lemma \ref{L5} is used also in the proof of Theorem \ref{T2.2}.
Our proof of Lemma \ref{L5} does not enable to estimate the
number of $A$-equivalence classes.
It is for this reason that we can not prove
quantitative versions of Theorems \ref{T2.1} and \ref{T2.2}.

\begin{proof}
We assume that the set of $\beta$ with \eqref{5.200} is not
contained in a union of finitely many $A$-equivalence classes
and derive a contradiction.

Pick $\beta$ with \eqref{5.200}. Then there exist numbers $\alpha$ such that
$A[\alpha]=A[\beta]$ and $\alpha$ is not $A$-equivalent to
$\beta$. Consider such $\alpha$.
Then from the identities
\[
\frac{\alpha^{(i)}-\alpha^{(1)}}{\alpha^{(2)}-\alpha^{(1)}}
+\frac{\alpha^{(2)}-\alpha^{(i)}}{\alpha^{(2)}-\alpha^{(1)}}=1\ \ (i=3\kdots d)
\]
and Lemma \ref{L2a} it follows
that the pairs
\begin{equation}\label{6.6.12}
\left(
\frac{\alpha^{(i)}-\alpha^{(1)}}{\alpha^{(2)}-\alpha^{(1)}},
\frac{\alpha^{(2)}-\alpha^{(i)}}{\alpha^{(2)}-\alpha^{(1)}}
\right)\ \ (i=3\kdots d)
\end{equation}
satisfy
\[
x+y=1\ \ \ \mbox{in $x,y\in\Gamma$,}
\]
where $\Gamma$ is the multiplicative group generated by
$A_G^*$ and the numbers
\[
\frac{\beta^{(i)}-\beta^{(1)}}{\beta^{(2)}-\beta^{(1)}},\
\frac{\beta^{(2)}-\beta^{(i)}}{\beta^{(2)}-\beta^{(1)}}\ \
(i=3\kdots d).
\]
By Lemma \ref{L3},(i), the group $\Gamma$ depends only on the
given $ L$-equivalence class $\mathcal{C}$ and is otherwise
independent of $\beta$. By Lemma \ref{L-finitely-generated}, the group
$\Gamma$ is finitely generated, and then by Lemma \ref{L1}, the pairs
(\ref{6.6.12}) belong to a finite set depending only on
$\Gamma$, hence only on $\mathcal{C}$.
Therefore, the tuple $\tau (\alpha )$ belongs to a
finite set depending only on $\mathcal{C}$. In view of Lemma
\ref{L3},(i), this means that $\alpha$ belongs to a union of
finitely many $ L$-equivalence classes which depends on
$\mathcal{C}$ but is otherwise independent of $\beta$. Now by
Dirichlet's box principle,
there is an $ L$-equivalence class
$\mathcal{C}'$ with the following property: the set of $\beta$
such that
\begin{equation}\label{6.6.13}
\left\{\begin{array}{l} \beta\in A_{ K},\  L (\beta )= K,\
\beta\in\mathcal{C},\\[0.25cm]
\mbox{there is } \alpha\in\mathcal{C}' \mbox{ such that }
A[\alpha]=A[\beta]
\\[0.05cm]
\mbox{and } \alpha \mbox{ is not } A\mbox{-equivalent to } \beta
\end{array}\right.
\end{equation}
cannot be contained in a union of finitely many $A$-equivalence
classes.

Fix $\beta_0$ with \eqref{6.6.13} and then fix $\alpha_0$ such
that $A[\alpha_0 ]=A[\beta_0]$, $\alpha_0\in\mathcal{C}'$ and
$\alpha_0$ is not $A$-equivalent to $\beta_0$.

Let $\beta$ be an arbitrary number with \eqref{6.6.13}. Choose
$\alpha$ such that $A[\alpha ]=A[\beta ]$, $\alpha\in\mathcal{C}'$
and $\alpha$ is not $A$-equivalent to $\beta$. Then there are
$u,u'\in L^*$, $a,a'\in L$ with
\begin{equation}\label{6.6.14}
\beta=u\beta_0+a,\,\, \alpha=u'\alpha_0+a' .
\end{equation}
For these $u,u'$ we have
\begin{equation}\label{3.5}
D_{K/L}(\beta)= u^{d(d-1)}D_{K/L}(\beta_0),\ \
D_{K/L}(\alpha)=u'^{d(d-1)}D_{K/L}(\alpha_0).
\end{equation}
On the other hand, it follows from $A[\alpha_0 ]=A[\beta_0 ]$,
$A[\alpha ]=A[\beta ]$ and Lemma \ref{L2a} (ii) that
$D_{K/L}(\be)/D_{K/L}(\al)\in A^*$ and $D_{K/L}(\be_0)/D_{K/L}(\al_0)\in A^*$.
Combined with \eqref{3.5} and our assumption that $A$ is integrally closed, this gives
\begin{equation}\label{6.6.15}
u'/u\in A^*.
\end{equation}
Since $ L (\beta_0)= K$ and $\alpha_0\in A[\beta_0]$ there is a
unique polynomial $F_0\in L[X]$ of degree $<d$, which in fact
belongs to $A[X]$, such that $\alpha_0=F_0(\beta_0)$. Likewise,
there is a unique polynomial $F\in L[X]$ of degree $<d$ which in
fact belongs to $A[X]$, such that  $\alpha=F(\beta)$. Inserting
(\ref{6.6.14}), it follows that
$F(X)=u'F_0\left((X-a)/u\right)+a'$. Suppose that
$F_0=\sum_{j=0}^m a_jX^j$ with $m<d$ and $a_m\not= 0$. Then $F$
has leading coefficient $a_mu'u^{-m}$ which belongs to $A$.
Together with \eqref{6.6.15} this implies
\begin{equation}\label{6.6.17}
u^{1-m}a_m\in A.
\end{equation}
Further, by \eqref{3.5}
\begin{equation}\label{6.6.18}
u^{d(d-1)}D_{K/L}(\beta_0)=D_{K/L}(\beta)\in A.
\end{equation}

We distinguish between the cases $m>1$ and $m=1$. First let $m>1$.
We have shown that every $\beta$ with \eqref{6.6.13} can be
expressed as $\beta =u\beta_0 +a$ with $u\in  L^*$, $a\in L$ and
moreover, $u$ satisfies \eqref{6.6.17}, \eqref{6.6.18}. Hence
\[
-\frac{\ord_{\fp}(D_{K/L} (\beta_0))}{d(d-1)}\leq\ord_{\fp}(u) \leq
\frac{\ord_{\fp}(a_m)}{m-1}\ \mbox{for } \fp\in\PPP (A),
\]
where $\PPP (A)$ is the collection of minimal non-zero prime ideals of $A$
and $\ord_{\fp}$ ($\fp\in\PPP (A)\,$) are the associated discrete valuations,
as explained in Section \ref{4}.
Thus, for the tuple $v(u):= (\ord_{\fp}(u):\, \fp\in\PPP (A)\,)$ we have only
finitely many possibilities.

We partition the set of $\beta$ with \eqref{6.6.13} into a finite number of 
classes according to the tuple $v(u)$.
Let $\beta_1=u_1\beta_0+a_1$, $\beta_2=u_2\beta_0+a_2$ belong to the same
class, where $u_1,u_2\in L^*$ and $a_1,a_2\in L$.
Then $v(u_1)=v(u_2)$ and so, $u_1u_2^{-1}\in A^*$ by \eqref{3.4a}.
Hence $\beta_2=v\beta_1+b$ with $v\in A^*$ and
$b\in  L$. But $b=\beta_2-v\beta_1$ is integral over $A$, hence
belongs to $A$ since $A$ is integrally closed. So two elements
with \eqref{6.6.13} belonging to the same class are
$A$-equivalent. But then, the set of $\beta$ with (\ref{6.6.13})
is contained in a union of finitely many $A$-equivalence classes,
which is against our assumption.

Now assume that $m=1$. Then
$$
\alpha_0=a_1\beta_0+a_0\quad\mbox{ with } a_1\in A\setminus\{
0\},\, a_0\in A,
$$
hence $a_1^{d(d-1)}=D_{K/L} (\alpha_0)/D_{K/L} (\beta_0)$.
By Lemma \ref{L2a} (ii)
we have $a_1^{d(d-1)}\in A^*$,
and then $a_1\in A^*$ since by assumption $A$ is integrally
closed. Hence $\alpha_0$, $\beta_0$ are $A$-equivalent, which is
against our choice of $\alpha_0$, $\beta_0$. We arrive again at a
contradiction.

Consequently, our initial assumption that the set of $\beta$
with \eqref{5.200}

cannot be contained in finitely many $A$-equivalence classes leads
to a contradiction. This proves Lemma \ref{L5}.
\end{proof}

Now our proof of Theorem \ref{T2.1} is complete.

\section{Reduction of Theorem \ref{T2.2} to a polynomial unit equation}\label{6}
\setcounter{equation}{0}

We keep the assumptions and notation from the previous sections.
In particular, $A$ is an integrally closed domain with quotient field $L$ of characteristic $0$
which is finitely generated
over $\Zz$ and $K$ is a finite extension of $L$.
Further, we denote by $G$ the normal closure of $K$ over $L$.
As it will turn out, the proof of part (i) of Theorem \ref{T2.2} is elementary.
Therefore, in this section we assume that $[K:L]=:d\geq 4$.
Let $\OO = A[\alpha ]=A[\beta ]$ be a two times monogenic $A$-order in $K$,
where $\alpha ,\beta$ are not $A$-equivalent.

By Lemma \ref{L2a},(i) we have
\begin{equation}\label{6.6.1}
\varepsilon_{ij}:=
\frac{\alpha^{(i)}-\alpha^{(j)}}{\beta^{(i)}-\beta^{(j)}} \in
A_G^*\ \mbox{for } i,j=1\kdots d,\, i\not=j,
\end{equation}
where $A_G^*$ is the unit group of the integral closure of $A$ in $G$.
Let $i,j,k$ be any three distinct indices from $\{ 1\kdots d\}$.
By Lemma \ref{L2a}, the identity
\[
\frac{\beta^{(j)}-\beta^{(i)}}{\beta^{(j)}-\beta^{(k)}}
+
\frac{\beta^{(i)}-\beta^{(k)}}{\beta^{(j)}-\beta^{(k)}}=1
\]
and a similar identity for $\alpha$, the two pairs
$(1,1)$ and $(\ve_{ij}/\ve_{jk},\ve_{ik}/\ve_{jk})$
satisfy
\begin{equation}\label{6.6.3}
\frac{\beta^{(j)}-\beta^{(i)}}{\beta^{(j)}-\beta^{(k)}}\cdot
x\,+\,
\frac{\beta^{(i)}-\beta^{(k)}}{\beta^{(j)}-\beta^{(k)}}\cdot y=1\
\ \mbox{ in }x, y\in A_G^* .
\end{equation}
Now a straightforward computation gives
\begin{equation}\label{6.1}
\frac{\ve_{ik}}{\ve_{jk}}-1\, =\,
\frac{\beta^{(i)}-\beta^{(j)}}{\beta^{(i)}-\beta^{(k)}}\cdot
\left(\frac{\ve_{ij}}{\ve_{jk}}-1\right).
\end{equation}
This is valid for any three distinct indices $i,j,k$.
Now take four distinct indices $i,j,k,l$ from $\{ 1\kdots d\}$.
By applying \eqref{6.1}
but with the respective triples $(i,j,k)$, $(i,k,l)$, $(i,l,j)$
replacing $(i,j,k)$, and taking the product, the terms with the
conjugates of $\beta$ disappear, and we obtain
\begin{eqnarray}\label{6.2}
&&\left(\frac{\ve_{ik}}{\ve_{jk}}-1\right)
\left(\frac{\ve_{il}}{\ve_{kl}}-1\right)
\left(\frac{\ve_{ij}}{\ve_{jl}}-1\right)
\\
\nonumber
&&\qquad\qquad\qquad\qquad =\,
\left(\frac{\ve_{ij}}{\ve_{jk}}-1\right)
\left(\frac{\ve_{ik}}{\ve_{kl}}-1\right)
\left(\frac{\ve_{il}}{\ve_{jl}}-1\right).
\end{eqnarray}
In the remainder of this section we focus on the equation
\begin{eqnarray}\label{6.4}
&&(x_1-1)(x_2-1)(x_3-1)=(y_1-1)(y_2-1)(y_3-1)\\
\nonumber
&&\qquad\qquad\qquad\qquad\qquad\qquad\qquad
\mbox{in } x_1,x_2,x_3,y_1,y_2,y_3\in\Gamma
\end{eqnarray}
where $\Gamma$ is a finitely generated multiplicative group,
contained in a field of characteristic $0$. As we just observed,
the tuple
\begin{equation}\label{6.3}
\left(\frac{\ve_{ik}}{\ve_{jk}}, \frac{\ve_{il}}{\ve_{kl}},
\frac{\ve_{ij}}{\ve_{jl}}, \frac{\ve_{ij}}{\ve_{jk}},
\frac{\ve_{ik}}{\ve_{kl}}, \frac{\ve_{il}}{\ve_{jl}}\right)
\end{equation}
is a solution to \eqref{6.4} with $\Gamma =A_G^*$.
Recall that by Lemma \ref{L-finitely-generated},
the group $A_G^*$ is finitely generated.

We prove the following Proposition concerning \eqref{6.4}.

\begin{proposition}\label{P6.1}
Let $G$ be a field of characteristic $0$ and $\Gamma$ a finitely generated
subgroup of $G^*$. Then there is a finite subset $\mathcal{S}$ of $\Gamma$
with $1\in\mathcal{S}$ such that for every solution
$(x_1\kdots y_3)\in\Gamma^6$ of \eqref{6.4},
at least one of the following holds:
\\[0.2cm]
{\bf (i)} at least one of $x_1\kdots y_3$ belongs to $\mathcal{S}$;
\\
{\bf (ii)} there are $\eta_1,\eta_2,\eta_3\in\{ \pm 1\}$ such that
$(y_1,y_2,y_3)$ is a permutation of $(x_1^{\eta_1},x_2^{\eta_2},x_3^{\eta_3})$;
\\
{\bf (iii)} one of the numbers in
$\{x_ix_j,\, x_i/x_j,\, y_iy_j,\, y_i/y_j:\, 1\leq i<j\leq 3\}$
is equal to either $-1$, or to a primitive cube root of unity.
\end{proposition}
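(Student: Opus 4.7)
My plan is to apply Lemma \ref{L0} to the six-variable polynomial
\begin{equation*}
f(X_1, X_2, X_3, Y_1, Y_2, Y_3) = (X_1-1)(X_2-1)(X_3-1) - (Y_1-1)(Y_2-1)(Y_3-1),
\end{equation*}
whose two constant terms cancel, leaving twelve monomials. By Lemma \ref{L0}, the non-degenerate solutions of $f=0$ in $\Gamma^6$ form a finite set; I will place $1$ together with all coordinates of these finitely many tuples into $\mathcal{S}$, so that every solution of (6.4) not covered by (i) is a degenerate solution of $f=0$. The remainder of the proof is then to show that every such degenerate solution falls under (ii) or (iii), possibly after enlarging $\mathcal{S}$ by finitely many more values.

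For a degenerate solution $(x_1,x_2,x_3,y_1,y_2,y_3)$ there exists $\mathbf{c}=(c_1,c_2,c_3,d_1,d_2,d_3)\in\Zz^6\setminus\{\mathbf{0}\}$ such that $f(x_1T^{c_1},\ldots,y_3T^{d_3})$ vanishes identically in $T$. Expanding and grouping by powers of $T$, the $X$-side contributes the seven exponents $c_1+c_2+c_3$, $c_i+c_j$ and $c_i$, with respective monomial coefficients $x_1x_2x_3$, $-x_ix_j$ and $x_i$, and the $Y$-side contributes the analogous seven exponents in the $d$'s with coefficients of opposite sign. Since every monomial coefficient is a nonzero element of $\Gamma$, the identity holds only through a balancing of $T$-exponents between the two sides together with a coefficient match within each exponent class. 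I would organize the analysis around a case split on the resulting combinatorial pairings. In the expected \emph{generic pairing}, $(d_1,d_2,d_3)$ is a signed permutation of $(c_1,c_2,c_3)$: when $d_i = c_{\sigma(i)}$, singletons match singletons and pairs match pairs, forcing $y_i = x_{\sigma(i)}$, hence case (ii) with $\eta_i=1$; when $d_i=-c_{\sigma(i)}$, a singleton on one side matches a pair-sum on the other, and tracking the coefficient equalities yields $y_i=x_{\sigma(i)}^{-1}$ together with an auxiliary relation which either lands in case (ii) with $\eta_i=-1$ or triggers case (iii).

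The hard part will be to handle the \emph{non-generic} matchings, in which some of the seven $T$-exponents on a side coincide, or in which the $X$- and $Y$-exponents cross-link in patterns other than a signed permutation (for instance, a singleton $c_i$ matched to a pair-sum $d_j+d_k$ while the remaining exponents are not paired by $\pm$ permutation). Each such cross-match produces a multiplicative relation among the coordinates of the form $x_i^{a}x_j^{b}y_k^{c}y_l^{d}=1$, and I would show, using $f=0$ together with these relations, that either one of the elements $x_ix_j$, $x_i/x_j$, $y_iy_j$, $y_i/y_j$ is forced to equal $-1$ or a primitive cube root of unity (giving (iii)), or some individual coordinate is pinned to one of finitely many values that can be absorbed into $\mathcal{S}$. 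The enumeration and algebraic tracing of these sub-cases is the main obstacle: there are many a priori ways to balance seven exponents against seven, and each must be followed through the coefficient-matching conditions to confirm that no exceptional family escapes (ii) or (iii).
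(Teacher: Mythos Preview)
Your strategy is essentially the paper's: apply Lemma~\ref{L0} to $f$, put the finitely many non-degenerate solutions into $\mathcal{S}$, and analyse the degenerate ones by matching $T$-exponents and coefficients. The gap is in your treatment of the degenerate case. A single application of Lemma~\ref{L0} only gives you a tuple $(c_1,c_2,c_3,d_1,d_2,d_3)\neq\mathbf{0}$; it does \emph{not} guarantee that every $c_i$ and $d_j$ is non-zero. If, say, $c_1=0$, then the factor $(x_1T^{c_1}-1)=(x_1-1)$ is a scalar in the $T$-identity, and $x_1$ is not pinned to finitely many values at all --- your claim that ``some individual coordinate is pinned to one of finitely many values that can be absorbed into $\mathcal{S}$'' is simply false here. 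What actually happens is that the identity splits into a $T$-identity in the variables with non-zero exponent and a scalar relation $(1-x_1)(\cdots)=z'(\cdots)$ in the remaining variables, and the latter is again an equation of the shape \eqref{6.5} but with strictly fewer factors. The paper handles this by an induction on $m+n$ (Lemma~\ref{L6.2}), which at each step peels off the zero-exponent variables and re-applies Lemma~\ref{L0} to the resulting smaller equation, until one arrives at an identity \eqref{6.6} with $c_1c_2c_3d_1d_2d_3\neq 0$. Without this inductive step your case analysis is incomplete.

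Once all exponents are non-zero, the paper also simplifies the combinatorics in a way you do not: it rewrites \eqref{6.9} as an equality of genuine polynomials $(T^{a_1}-u_1)(T^{a_2}-u_2)(T^{a_3}-u_3)=(T^{b_1}-v_1)(T^{b_2}-v_2)(T^{b_3}-v_3)$ with $a_i=|c_i|>0$, $u_i=x_i^{\pm 1}$, and similarly for the $b_j,v_j$. This absorbs your ``signed permutation'' dichotomy into the definition of the $u_i,v_j$ and reduces the problem to matching two monic polynomials of equal degree. The case split is then organised by the ordering of $a_1,a_2,a_3$ versus $b_1,b_2,b_3$ (ten cases up to symmetry), and under the hypothesis $u_i+u_j\neq 0$, $v_i+v_j\neq 0$ (the negation of part of (iii)) each case leads either to $(v_1,v_2,v_3)$ being a permutation of $(u_1,u_2,u_3)$ or to some $v_i/v_j$ being a primitive cube root of unity. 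Your raw expansion into fourteen monomials (not twelve) with mixed-sign exponents would require a substantially messier enumeration.
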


We remark here that case (iii) may occur. For instance, let $i^2=-1$, let
$\rho$ denote a primitive cube root of unity, and assume that
$i,\rho\in\Gamma$. Then for every $u\in\Gamma$, the tuple
$(u^6,iu^3,-iu^3,u^4,\rho u^4,\rho^2u^4)$ satisfies \eqref{6.4}.
There are various other such infinite families of solutions.
Proposition \ref{P6.1} contains only the information needed for the proof
of Theorem \ref{T2.2}.

Proposition \ref{P6.1} is deduced from the following lemma.
Here and below, $T$ is an auxiliary variable,
and by $\equiv$ we indicate that an identity holds identically in $T$.

\begin{lemma}\label{L6.2}
Let $G$, $\Gamma$ be as in Proposition \ref{P6.1}.
Let $m,n$ be non-negative integers with $m+n>0$.
Then there is a finite subset $\mathcal{T}$ of $\Gamma$
with $1\in\mathcal{T}$
such that for every solution
$(x_1\kdots x_m,\, y_1\kdots y_n,z)\in\Gamma^{m+n+1}$ of
\begin{equation}\label{6.5}
(1-x_1)\cdots (1-x_m)\, =\, z(1-y_1)\cdots (1-y_n),
\end{equation}
at least one of the following
holds:
\\[0.2cm]
{\bf (i)} at least one of $x_1\kdots y_n$ belongs to $\mathcal{T}$;
\\
{\bf (ii)} there are integers $c_1\kdots c_m,\, d_1\kdots d_n, e$
with $c_1\cdots c_md_1\cdots d_n\not= 0$,
such that
\begin{equation}\label{6.6}
(1-x_1T^{c_1})\cdots (1-x_mT^{c_m})\equiv zT^{e}(1-y_1T^{d_1})\cdots (1-y_nT^{d_n}).
\end{equation}
\end{lemma}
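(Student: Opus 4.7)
My plan is to apply Laurent's theorem (in the form already used in the proof of Lemma~\ref{L0}) directly to \eqref{6.5} and to sort the resulting coset structure into cases~(i) and~(ii) by a short cocharacter/hyperplane argument. I do not expect induction on $m+n$ to be needed.

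Set
\[
P(X_1,\ldots,X_m,Y_1,\ldots,Y_n,Z)\;:=\;\prod_{i=1}^m(1-X_i)\;-\;Z\prod_{j=1}^n(1-Y_j),
\]
and let $V\subseteq\Gm^{m+n+1}$ be the hypersurface $\{P=0\}$, so that every solution of~\eqref{6.5} is a point of $V\cap\Gamma^{m+n+1}$. Laurent's theorem furnishes finitely many cosets $g_1H_1,\ldots,g_rH_r$ of algebraic subgroups $H_s$ of $\Gm^{m+n+1}$, each contained in $V$, whose union covers $V\cap\Gamma^{m+n+1}$. I would put into $\mathcal{T}$ the element $1$ together with all coordinates of the (finitely many) points coming from the $0$-dimensional cosets; such solutions automatically satisfy~(i).

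For a positive-dimensional coset $g_sH_s\subseteq V$, denote by $\Lambda_s\subseteq\Zz^{m+n+1}$ the cocharacter lattice of the identity component $H_s^\circ$. The key observation is that for any nonzero $(c_1,\ldots,c_m,d_1,\ldots,d_n,e)\in\Lambda_s$ and any $(x,y,z)\in g_sH_s^\circ$, the translated $1$-parameter curve $T\mapsto(x_1T^{c_1},\ldots,y_nT^{d_n},zT^e)$ lies in $V$, which is exactly the polynomial identity~\eqref{6.6}. I would then split on whether $\Lambda_s$ contains an element with \emph{every} $c_k$ and \emph{every} $d_l$ nonzero. If it does, then every solution lying in $g_sH_s$ satisfies~(ii) via this choice, using the finiteness of $H_s/H_s^\circ$ to shift attention to the connected component $gH_s^\circ$ containing the given solution $g$. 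If it does not, then $\Lambda_s$ is contained in the finite union $\bigcup_{k=1}^m\{c_k=0\}\cup\bigcup_{l=1}^n\{d_l=0\}$; since a sublattice of $\Zz^{N}$ lying in a finite union of rational hyperplanes must lie in one of them, we get $\Lambda_s\subseteq\{c_k=0\}$ for some $k$ (the case $\{d_l=0\}$ being symmetric). Geometrically this means $H_s^\circ\subseteq\{X_k=1\}$, so the $x_k$-coordinate is constant on each connected component of $g_sH_s$ and takes only finitely many values overall; I would add these to $\mathcal{T}$, so the solutions in this coset satisfy~(i).

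Collecting everything, $\mathcal{T}$ is the union of $\{1\}$, the coordinates of finitely many $0$-dimensional coset points, and the finitely many ``frozen'' coordinate values picked up from the positive-dimensional cosets of the second type; this is a finite subset of $\Gamma$ with the required property. The step I expect to be most delicate to write out cleanly is the dictionary between the lattice condition on $\Lambda_s$ and the geometric condition $H_s^\circ\subseteq\{X_k=1\}$, together with the elementary but necessary remark that a lattice covered by finitely many proper rational subspaces must lie in one of them.
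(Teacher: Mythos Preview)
Your argument is correct and genuinely different from the paper's proof. The paper proceeds by induction on $m+n$: it applies Lemma~\ref{L0} to obtain, for all but finitely many solutions, a degeneracy relation \eqref{6.6} where some of the exponents $c_i,d_j$ may vanish; it then splits off the factors with vanishing exponents into a smaller equation of the same shape (their (8.7)) and invokes the induction hypothesis on that, finally multiplying the resulting identity back with the remaining factors. Your approach avoids induction altogether by working directly with the full coset decomposition from Laurent's theorem and the cocharacter lattice $\Lambda_s$ of each $H_s^\circ$: either $\Lambda_s$ contains a vector with no vanishing $c_k,d_l$ (giving (ii) at once for every solution in that coset), or the elementary fact that a lattice contained in a finite union of rational hyperplanes lies in a single one forces some coordinate to be constant along $H_s^\circ$, hence to take only finitely many values on $g_sH_s$ (giving (i)). What your route buys is a cleaner one-shot argument with no recursion and no need for the factorization \eqref{6.7}--\eqref{6.8}; what the paper's route buys is that it uses Laurent's theorem only through the black-box statement of Lemma~\ref{L0}, without ever unpacking cocharacter lattices or connected components. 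Two small cosmetic points: the appeal to finiteness of $H_s/H_s^\circ$ is needed only in your case~(i) (for case~(ii) the inclusion $gH_s^\circ\subseteq g_sH_s\subseteq V$ is all you use), and to keep $\mathcal{T}\subseteq\Gamma$ you should collect the frozen coordinate values only from those connected components that actually meet $\Gamma^{m+n+1}$.
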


\begin{proof}
We proceed by induction on $m+n$. For $m=1,n=0$, say, our assertion is a simple consequence of the fact
that the equation $1-x_1=z$ has only finitely many solutions in $x_1,z\in\Gamma$. Let $p\geq 2$,
and suppose that the lemma is true for all pairs of non-negative integers $m,n$ with $m+n<p$.
Take non-negative integers $m,n$ with $m+n=p$.
By Lemma \ref{L0}, for all but finitely many solutions $(x_1\kdots y_n,z)\in\Gamma^{m+n+1}$ of \eqref{6.5} with $x_i\not= 1$ for $i=1\kdots m$,
$y_j\not= 1$ for $j=1\kdots n$,
there are integers $c_1\kdots d_n,e$, not all $0$, such that \eqref{6.6} holds,
but some of $c_1\kdots c_m,d_1\kdots d_n$ may be zero. Notice that \eqref{6.6} cannot hold
with $e\not= 0$ and all $c_i,d_j$ equal to $0$.
Fix a solution $(x_1\kdots y_n,z)$ satisfying \eqref{6.6} where some of the $c_i,d_j$ are $0$,
and put $I:=\{ i :\, c_i\not= 0\}$, $I^c :=\{ 1\kdots m\}\setminus I$,
$J:=\{ j :\, d_j\not= 0\}$, $J^c :=\{ 1\kdots n\}\setminus J$.
Then at least one of $I,J$ is non-empty.

For $i\in I$, put $a_i :=|c_i|$
and $u_i := x_i^{\pm 1}$ with $u_i^{a_i}=x_i^{-c_i}$.
Likewise, for $j\in J$, put $b_j:=|d_j|$, and $v_j:=y_j^{\pm 1}$ such that $v_j^{b_j}=y_j^{-d_j}$.
Then \eqref{6.5} implies that
\[
\prod_{i\in I} (T^{a_i}-u_i)\cdot \prod_{i\in I^c} (1-x_i)\equiv
z'T^f\prod_{j\in J} (T^{b_j}-v_j)\cdot \prod_{j\in I^c} (1-y_j)
\]
with $z'\in\Gamma$, $f\in\Zz$. Since both sides of this identity must be polynomials
with equal leading coefficients, we have $f=0$, and
\begin{equation}\label{6.7}
\prod_{i\in I^c} (1-x_i)=z'\prod_{j\in I^c} (1-y_j).
\end{equation}
By combining this with \eqref{6.6} we obtain
\begin{equation}\label{6.8}
\prod_{i\in I} (1-T^{c_i}x_i)\equiv z''T^e \prod_{j\in J}(1-T^{d_j}y_j),
\end{equation}
where $z' z'' =z$. Recall that all but finitely many solutions of \eqref{6.5}
satisfy both \eqref{6.7}, \eqref{6.8}.

We apply the induction hypothesis to \eqref{6.7}. Notice that $|I^c|+|J^c|<m+n$ since at least
one of the sets $I,J$ is non-empty.
It follows that there
exists a finite set $\mathcal{T}'$ with $1\in\mathcal{T}'$ such that for every tuple $(x_i:\, i\in I^c;\, y_j:\, j\in J^c; z ')$
with entries from $\Gamma$, satisfying \eqref{6.7},
either one of the $x_i$ ($i\in I^c$) or $y_j$ ($j\in J^c$) belongs to $\mathcal{T}'$, or there
are integers
$c_i\, (i\in I^c)$, $d_j:\, (j\in J^c)$, $e '$ with $\prod_{i\in I^c} c_i\prod_{j\in J^c} d_j\not= 0$ such that
\[
\prod_{i\in I^c} (1-x_iT^{c_i})
\equiv z'T^{e'}\prod_{j\in I^c} (1-y_jT^{d_j}).
\]
By multiplying this with \eqref{6.8},
we obtain an identity of the type \eqref{6.6}
where none of the $c_i,\, d_j$ are $0$.
All solutions $(x_1\kdots x_m;\, y_1\kdots y_n;\, z)\in\Gamma^{m+n+1}$
of \eqref{6.5}
satisfy this identity,
except those
for which some $x_i$ or $y_j$ belongs to $\mathcal{T}'$
or the finitely many solutions with all $x_i$, $y_j$ different from $1$
for which \eqref{6.7}, \eqref{6.8} do not both hold.
This completes our induction step, and our proof.
\end{proof}

\begin{proof}[Proof of Proposition \ref{P6.1}]
We take for $\mathcal{S}$ the set $\mathcal{T}$ from Lemma \ref{L6.2}, taken with $m=n=3$ and $z=1$.
Pick a solution $(x_1\kdots y_3)\in\Gamma^6$ of \eqref{6.4} with none of the $x_i,y_j$ in $\mathcal{S}$.
Then there are integers $c_1\kdots d_3$ and $e$ with $c_1c_2c_3d_1d_2d_3\not= 0$ such that
\begin{equation}\label{6.9}
(1-x_1T^{c_1})(1-x_2T^{c_2})(1-x_3T^{c_3})\equiv T^e(1-y_1T^{d_1})(1-y_2T^{d_2})(1-y_3T^{d_3}).
\end{equation}
For $i=1,2,3$, define $a_i:=|c_i|$, $b_i:=|d_i|$, $u_i:=x_i^{\pm 1}$, $v_i:=y_i^{\pm 1}$,
where $u_i^{a_i}=x_i^{-c_i}$, $v_i^{b_i}=y_i^{-d_i}$.
Then \eqref{6.9} can be rewritten as
an identity in polynomials
\begin{equation}\label{6.10}
(T^{a_1}-u_1)(T^{a_2}-u_2)(T^{a_3}-u_3)\equiv (T^{b_1}-v_1)(T^{b_2}-v_2)(T^{b_3}-v_3)
\end{equation}
with positive integers $a_1\kdots b_3$;
here we have divided out possible powers of $T$ on both sides.

In what follows we assume that
\begin{equation}\label{6.11}
u_i+u_j\not=0,\ \ v_i+v_j\not=0\ \mbox{for } 1\leq i<j\leq 3
\end{equation}
and prove that at least one of the following two alternatives must hold:
\begin{eqnarray}
\label{6.12}
&(v_1,v_2,v_3)\ \mbox{is a permutation of } (u_1,u_2,u_3);&
\\[0.1cm]
\label{6.13}
&
\begin{array}{l}
\{ u_i/u_j,\, v_i/v_j\ (i\leq i<j\leq 3)\}
\\
\qquad\qquad\qquad\mbox{contains a primitive cube root of unity.}
\end{array}&
\end{eqnarray}
This clearly implies Proposition \ref{P6.1}. Since \eqref{6.11}--\eqref{6.13}
are invariant under permutations of $u_1,u_2,u_3$, under permutations of $v_1,v_2,v_3$
and under interchanging the tuples $(u_1,u_2,u_3)$, $(v_1,v_2,v_3)$, it suffices to consider
the cases (i)--(x) below.
\\[0.1cm]

\noindent
{\bf Case (i).} $a_1>a_2>a_3,\ \ b_1>b_2>b_3$.
\\
Then \eqref{6.10} becomes
\begin{eqnarray*}
&&T^{a_1+a_2+a_3}-u_3T^{a_1+a_2}-u_2T^{a_1+a_3}-u_1T^{a_2+a_3}
\\
&&\qquad\qquad +u_2u_3T^{a_1}+u_1u_3T^{a_2}+u_1u_2T^{a_3}-u_1u_2u_3\ \equiv
\\[0.1cm]
&&T^{b_1+b_2+b_3}-v_3T^{b_1+b_2}-v_2T^{b_1+b_3}-v_1T^{b_2+b_3}
\\
&&\qquad\qquad +v_2v_3T^{b_1}+v_1v_3T^{b_2}+v_1v_2T^{b_3}-v_1v_2v_3.
\end{eqnarray*}
We have either $a_2+a_3\not=a_1$ and $b_2+b_3\not= b_1$ or $a_2+a_3=a_1$ and $b_2+b_3=b_1$.
But in each of these cases,
the second largest exponent on $T$ on the left is $a_1+a_2$ and that on the right $b_1+b_2$; hence $u_3=v_3$.
Likewise, the third largest exponent
on $T$ on the left is $a_1+a_3$ and that on the right $b_1+b_3$; so $u_2=v_2$.
Finally, $u_1u_2u_3=v_1v_2v_3$; hence $u_1=v_1$. This implies
\eqref{6.12}.
\\[0.2cm]
{\bf Case (ii).} $a_1>a_2>a_3,\ \ b_1=b_2>b_3$.
\\
Then \eqref{6.10} becomes
\begin{eqnarray*}
&&T^{a_1+a_2+a_3}-u_3T^{a_1+a_2}-u_2T^{a_1+a_3}-u_1T^{a_2+a_3}+u_2u_3T^{a_1}
\\
&&\qquad\qquad +u_1u_3T^{a_2}+u_1u_2T^{a_3}-u_1u_2u_3\ \equiv
\\[0.1cm]
&&T^{2b_1+b_3}-v_3T^{2b_1}-(v_1+v_2)T^{b_1+b_3}
\\
&&\qquad\qquad +(v_1+v_2)v_3T^{b_1}+v_1v_2T^{b_3}-v_1v_2v_3.
\end{eqnarray*}
By \eqref{6.11}, the right-hand side consists of $6$ terms with different
exponents on $T$ and non-zero coefficients.
So on the left-hand side, two terms have to cancel each other
and this is possible
only if $a_2+a_3=a_1$ and $u_1=u_2u_3$.
Comparing the remaining term with the largest exponent on $T$ on the left
with the term with the largest exponent on $T$ on the right, and
also the terms on both sides with the second largest, third largest
exponent on $T$, etc.,
we see that $a_1+a_2+a_3=2b_1+b_3$, $a_1+a_2=2b_1$,
$a_1+a_3=b_1+b_3$. This implies $a_3=b_3$, $a_1=b_1$, $a_2=b_1$, contradicting $a_1>a_2$.
So Case (ii) is impossible.
\\[0.2cm]
{\bf Case (iii).} $a_1>a_2>a_3,\ \ b_1>b_2=b_3$.
\\
Then \eqref{6.10} becomes
\begin{eqnarray*}
&&T^{a_1+a_2+a_3}-u_3T^{a_1+a_2}-u_2T^{a_1+a_3}-u_1T^{a_2+a_3}+u_2u_3T^{a_1}
\\
&&\qquad\qquad +u_1u_3T^{a_2}+u_1u_2T^{a_3}-u_1u_2u_3\ \equiv
\\[0.1cm]
&&T^{b_1+2b_3}-(v_2+v_3)T^{b_1+b_3}-v_1T^{2b_3}
\\
&&\qquad\qquad +v_2v_3T^{b_1}+v_1(v_2+v_3)T^{b_3}-v_1v_2v_3.
\end{eqnarray*}
Again, on the left-hand side we must have cancellation of two terms, implying $a_2+a_3=a_1$ and $u_1=u_2u_3$.
On the right-hand side, all six terms must have different exponents on $T$, so $2b_3\not= b_1$. If $2b_3>b_1$,
then comparing on both sides the three terms with the largest powers
of $T$, we get $a_1+a_2+a_3=b_1+2b_3$, $a_1+a_2=b_1+b_3$, $a_1+a_3=2b_3$, implying $a_1=a_3=b_3$ which is impossible.
So $b_1>2b_3$. Then comparing the exponents on $T$
of the corresponding terms on the left-
and right-hand side does not lead to a contradiction.
Comparing the coefficients of the terms with the second largest exponent on $T$, i.e.,
with $T^{a_1+a_2}$, $T^{b_1+b_3}$, with the
third largest exponent, etc.,
we get $u_3=v_2+v_3$, $u_2=-v_2v_3$, $u_1u_3=-v_1$, $u_1u_2=v_1(v_2+v_3)$,
 $u_1u_2u_3=v_1v_2v_3$.
Consequently, $v_1v_2v_3=u_1u_2u_3=v_1(v_2+v_3)^2$, hence $v_2v_3=(v_2+v_3)^2$, $v_2^2+v_2v_3+v_3^2=0$,
$v_2/v_3$ is a primitive cube root of unity. This implies \eqref{6.13}.
\\[0.2cm]
{\bf Case (iv).} $a_1>a_2>a_3,\ \ b_1=b_2=b_3$.
\\
In this case, the expansion of the left-hand side of \eqref{6.10} gives at least $6$ non-zero terms
with distinct powers of $T$,
while the right-hand side cannot have more than $4$ terms.
So this case is impossible.
\\[0.2cm]
{\bf Case (v).} $a_1=a_2>a_3,\ \ b_1=b_2>b_3$.
\\
Then \eqref{6.10} becomes
\begin{eqnarray*}
&&T^{2a_1+a_3}-u_3T^{2a_1}-(u_1+u_2)T^{a_1+a_3}
\\
&&\qquad\qquad +(u_1+u_2)v_3T^{a_1}+u_1u_2T^{a_3}-a_1a_2a_3\ \equiv
\\[0.1cm]
&&T^{2b_1+b_3}-v_3T^{2b_1}-(v_1+v_2)T^{b_1+b_3}
\\
&&\qquad\qquad +(v_1+v_2)v_3T^{b_1}+v_1v_2T^{b_3}-v_1v_2v_3.
\end{eqnarray*}
By \eqref{6.11} we have on both sides $6$ non-zero terms with distinct
powers of $T$.
Comparing the terms on both sides with the second highest power of $T$, i.e.,
$T^{2a_1}$ and $T^{2b_1}$, we get $u_3=v_3$.
Comparing the terms with the third highest power of $T$, i.e.,
$T^{a_1+a_3}$ and $T^{b_1+b_3}$, we obtain $u_1+u_2=v_1+v_2$,
and finally, from the terms with the smallest positive power of $T$,
i.e., $T^{a_3}$, $T^{b_3}$, we obtain $u_1u_2=v_1v_2$.
Hence $\{ u_1,u_2\}=\{ v_1,v_2\}$. This implies \eqref{6.12}.
\\[0.2cm]
{\bf Case (vi).} $a_1=a_2>a_3$,\ \ $b_1>b_2=b_3$.
\\
Then \eqref{6.10} becomes
\begin{eqnarray*}
&&T^{2a_1+a_3}-u_3T^{2a_1}-(u_1+u_2)T^{a_1+a_3}
\\
&&\qquad\qquad +(u_1+u_2)u_3T^{a_1}+u_1u_2T^{a_3}-u_1u_2u_3\ \equiv
\\[0.1cm]
&&T^{b_1+2b_3}-(v_2+v_3)T^{b_1+b_3}-v_1T^{2b_3}
\\
&&\qquad\qquad +v_2v_3T^{b_1}+v_1(v_2+v_3)T^{b_3}-v_1v_2v_3.
\end{eqnarray*}
On the left-hand side there are $6$ non-zero terms with distinct powers of $T$.
So on the right-hand side we must also have $6$ non-zero terms
with distinct powers of $T$. We have either $2b_3>b_1$ or $2b_3<b_1$.
If $2b_3>b_1$ then, on comparing the terms with the three largest
exponents on $T$ on both sides we get
$2a_1+a_3=b_1+2b_3$, $2a_1=b_1+b_3$, $a_1+a_3=2b_3$,
hence $a_1=a_3=b_3$, which is impossible. So $b_1>2b_3$.
Then comparing the coefficients of the terms with the largest exponent on $T$
on both sides, the terms with the second largest exponent, etc.
we get $u_3=v_2+v_3$, $u_1+u_2=v_3$, $(u_1+u_2)u_3=v_2v_3$,
$u_1u_2=v_1(v_2+v_3)$, $u_1u_2u_3=v_1v_2v_3$. This leads to
$v_1v_2v_3=v_1(v_2+v_3)^2$, and then similarly as in Case (iii)
it follows that
$v_2/v_3$ is a primitive cube root of unity. Hence \eqref{6.13} holds.
\\[0.2cm]
{\bf Case (vii).} $a_1=a_2>a_3,\ \ b_1=b_2=b_3$.
\\
This case is impossible since on the left-hand side of \eqref{6.10}
we have $6$ non-zero terms with distinct powers of $T$
and on the right-hand side not more than $4$ terms.
\\[0.2cm]
{\bf Case (viii).} $a_1>a_2=a_3,\ \ b_1>b_2=b_3$.
\\
Then \eqref{6.10} becomes
\begin{eqnarray*}
&&T^{a_1+2a_3}-(u_2+u_3)T^{a_1+a_3}-u_1T^{2a_3}
\\
&&\qquad\qquad +u_2u_3T^{a_1}+u_1(u_2+u_3)T^{a_3}-u_1u_2u_3\ \equiv
\\[0.1cm]
&&T^{b_1+2b_3}-(v_2+v_3)T^{b_1+b_3}-v_1T^{2b_3}
\\
&&\qquad\qquad +v_2v_3T^{b_1}+v_1(v_2+v_3)T^{b_3}-v_1v_2v_3.
\end{eqnarray*}
There are various possibilities depending on
whether $2a_3=a_1$, $2a_3\not= a_1$,
$u_1=u_2u_3$, $u_1\not= u_2u_3$ and similarly for the $b_i$'s and $v_i$'s.
But in each of these cases, $a_1+a_3$ is the second largest exponent on $T$
occurring on the
left and $b_1+b_3$ the second largest exponent on the right and so
$u_2+u_3=v_2+v_3$. Further, $a_3$ is the smallest positive exponent on the
left and $b_3$ the smallest positive exponent on the right and so
$u_1(u_2+u_3)=v_1(v_2+v_3)$; and finally $u_1u_2u_3=v_1v_2v_3$.
It follows that $u_1=v_1$, $u_2u_3=v_2v_3$, and then $\{ u_2,u_3\}=\{ v_2,v_3\}$.
This implies \eqref{6.12}.
\\[0.2cm]
{\bf Case (ix).} $a_1>a_2=a_3,\ \ b_1=b_2=b_3$.
\\
Then \eqref{6.10} becomes
\begin{eqnarray*}
&&T^{a_1+2a_3}-(u_2+u_3)T^{a_1+a_3}-u_1T^{2a_3}+u_2u_3T^{a_1}
\\
&&\qquad\qquad +u_1(u_2+u_3)T^{a_3}-u_1u_2u_3\ \equiv
\\[0.1cm]
&&T^{3b_1}-(v_1+v_2+v_3)T^{2b_1}+(v_2v_3+v_1v_3+v_1v_2)T^{b_1}-v_1v_2v_3.
\end{eqnarray*}
Then necessarily, $a_1=2a_3$ and $u_1=u_2u_3$. Further,
all terms on the right-hand side are non-zero.
Comparing the terms with the largest and second largest exponent on $T$,
we see that $a_1+2a_3=3b_1$, $a_1+a_3=2b_1$, hence $a_1=a_3=b_1$
which is impossible.
\\[0.2cm]
{\bf Case (x).} $a_1=a_2=a_3,\ \ b_1=b_2=b_3$.
\\
Then \eqref{6.10} implies at once \eqref{6.12}. This completes the proof
of Proposition \ref{P6.1}.
\end{proof}

\section{Proof of Theorem \ref{T2.2}}\label{7}
\setcounter{equation}{0}

Let as before $A$ be an integrally closed domain with quotient field $L$ of characteristic $0$
which is finitely generated
over $\Zz$, and $K$ an extension of $L$ of finite degree $d\geq 3$.
Further, denote by $G$ the normal closure of $K$ over $L$.
In what follows, we consider pairs $(\alpha ,\beta )$ such that
\begin{equation}\label{7.0}
\left\{
\begin{array}{ll}
\mbox{$L(\alpha )=L(\beta )=K$,\ \ $\alpha ,\beta$ are integral over $A$,}
\\
\mbox{$A[\alpha ]= A[\beta ]$,\ \ $\alpha ,\beta$ are not $A$-equivalent.}
\end{array}
\right.
\end{equation}

The next lemma implies part (i) of Theorems \ref{T2.2} and \ref{T1.2}.

\begin{lemma}\label{L7.2}
Suppose that $[K:L]=3$. Let $(\alpha ,\beta )$ be a pair with
\eqref{7.0}.
Then there is a matrix
$\left(\begin{smallmatrix}a_1&a_2\\ a_3&a_4\end{smallmatrix}\right)$
such that
\begin{equation}\label{7.9}
\Big(\begin{array}{cc}a_1&a_2\\ a_3&a_4\end{array}\Big)\in\GL (2,L),
\ \ \
\beta =\frac{a_1\alpha +a_2}{a_3\alpha +a_4},\ \ a_3\not= 0.
\end{equation}
Further, if $A$ is a principal ideal domain then
$\left(\begin{smallmatrix}a_1&a_2\\ a_3&a_4\end{smallmatrix}\right)$ can be chosen
from $\GL (2,A)$.
\end{lemma}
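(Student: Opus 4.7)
The plan is to construct the matrix by exploiting the fact that a M\"obius transformation on $\Pp^1$ is uniquely determined by its action on three points. Since $L(\alpha)=K$ has degree $3$, the conjugates $\alpha^{(1)},\alpha^{(2)},\alpha^{(3)}$ in $G$ are pairwise distinct, and likewise so are the conjugates $\beta^{(1)},\beta^{(2)},\beta^{(3)}$. Hence there is a unique projective class $[a_1:a_2:a_3:a_4]\in\Pp^3(G)$ with $a_1a_4-a_2a_3\ne 0$ satisfying
\[
\beta^{(i)}=\frac{a_1\alpha^{(i)}+a_2}{a_3\alpha^{(i)}+a_4}\qquad (i=1,2,3).
\]
The resulting M\"obius transformation $M$ lives a priori only over $G$; the main step will be to show that its coefficients descend to $L$.

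To this end, I will run a Galois-descent argument. For any $\sigma\in\Gal(G/L)$, the composition $\sigma\circ\sigma_i$ is an $L$-embedding of $K$ into $G$, hence equals $\sigma_{\pi(i)}$ for a unique permutation $\pi\in S_3$. Consequently $\sigma$ permutes $(\alpha^{(1)},\alpha^{(2)},\alpha^{(3)})$ and $(\beta^{(1)},\beta^{(2)},\beta^{(3)})$ by the \emph{same} permutation $\pi$. Applying $\sigma$ coefficient-wise to the identity above, I find that the M\"obius transformation with coefficients $\sigma(a_1),\ldots,\sigma(a_4)$ sends $\alpha^{(\pi(i))}$ to $\beta^{(\pi(i))}$ for $i=1,2,3$; by the uniqueness of $M$ the projective point $[a_1:a_2:a_3:a_4]$ is therefore $\sigma$-invariant. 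As this holds for every $\sigma\in\Gal(G/L)$, the point lies in $\Pp^3(L)$, and I may choose a representative with $a_1,a_2,a_3,a_4\in L$.

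Next, I will verify that $a_3\ne 0$. If $a_3=0$, then $a_4\ne 0$ by invertibility of the matrix, and $\beta=(a_1/a_4)\alpha+(a_2/a_4)$ would be $L$-equivalent to $\alpha$. Since also $L(\alpha)=L(\beta)=K$ and $A[\alpha]=A[\beta]$, Lemma \ref{L2b} would then force $\alpha$ and $\beta$ to be $A$-equivalent, contradicting \eqref{7.0}. Hence $a_3\ne 0$, and \eqref{7.9} holds. Finally, assume $A$ is a principal ideal domain. Clearing a common denominator from $a_1,\ldots,a_4$ and then dividing out a greatest common divisor, I may arrange that $a_1,\ldots,a_4\in A$ with $(a_1,a_2,a_3,a_4)=(1)$; Lemma \ref{L7.1} then yields $\bigl(\begin{smallmatrix}a_1&a_2\\a_3&a_4\end{smallmatrix}\bigr)\in\GL(2,A)$. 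The only nontrivial step is the Galois descent; everything else is formal.
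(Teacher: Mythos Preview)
Your proof is correct and follows essentially the same approach as the paper: construct the M\"obius transformation over $G$ from three pairs of conjugates, use Galois descent (via the observation that $\sigma$ permutes the $\alpha^{(i)}$ and $\beta^{(i)}$ identically) to show the coefficients lie in $L$, invoke Lemma~\ref{L2b} to get $a_3\ne 0$, and apply Lemma~\ref{L7.1} in the PID case. The only cosmetic difference is that the paper normalizes by setting the first nonzero coefficient to $1$ where you work with the projective class in $\Pp^3$.
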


\begin{proof}
Let $\sigma_i$ ($i=1,2,3$) be the $L$-isomorphisms of $K$ into $G$,
and write $\alpha^{(i)}:=\sigma_i(\alpha )$, $\beta^{(i)}:=\sigma_i(\beta )$
for $i=1,2,3$.
By straightforward linear algebra, there are $a_1,a_2,a_3,a_4\in G$ such that
\[
\beta^{(i)}=\frac{a_1\alpha^{(i)}+a_2}{a_3\alpha^{(i)}+a_4}\ \
\mbox{for $i=1,2,3$.}
\]
If we choose the first non-zero element among $a_1\kdots a_4$ equal to $1$,
then $a_1\kdots a_4$ are uniquely determined.
By applying $\sigma\in {\rm Gal}(G/L)$ and observing that $\sigma$ permutes
the $\alpha^{(i)}$ in the same way as the $\beta^{(i)}$, we infer that
$\sigma (a_i)=a_i$ for $i=1\kdots 4$. Hence $a_i\in L$ for $i=1\kdots 4$.
The matrix $\left(\begin{smallmatrix}a_1&a_2\\ a_3&a_4\end{smallmatrix}\right)$
must have non-zero determinant since otherwise
$\beta^{(1)}=\beta^{(2)}=\beta^{(3)}$, contrary to our assumption
$L(\beta )=K$. Next, we must have $a_3\not= 0$.
For otherwise, $\alpha ,\beta$ are $L$-equivalent,
hence $A$-equivalent by Lemma \ref{L2b},
contrary to our assumptions.
This proves \eqref{7.9}.

In case that $A$ is a principal ideal domain, by taking a scalar multiple
of $\left(\begin{smallmatrix}a_1&a_2\\ a_3&a_4\end{smallmatrix}\right)$,
we can see to it
that $a_1\kdots a_4\in A$ and $(a_1\kdots a_4)=(1)$.
Then
$\left(\begin{smallmatrix}a_1&a_2\\ a_3&a_4\end{smallmatrix}\right)\in\GL (2,A)$
by Lemma \ref{L7.1}. This completes the proof of Lemma \ref{L7.2}.
\end{proof}

In what follows, we assume that
\begin{equation}\label{7.10}
\left\{
\begin{array}{l}
\mbox{$[K:L]=d\geq 4$,\ \ ${\rm Gal}(G/L)\cong S_4$ if $d=4$,}
\\
\mbox{$K$ is four times transitive over $L$ if $d\geq 5$.}
\end{array}
\right.
\end{equation}
For every pair $(\alpha ,\beta )$ with \eqref{7.0} we define, in the usual
manner,
\[
\ve_{ij}:=\frac{\alpha^{(i)}-\alpha^{(j)}}{\beta^{(i)}-\beta^{(j)}}
\ \ (1\leq i,j\leq d,\, i\not= j).
\]

We start with a simple, but for our proof important observation.

\begin{lemma}\label{L7.4} Let $\alpha ,\beta$ satisfy \eqref{7.0},
and let $(p_1,p_2,p_3,p_4)$, $(q_1,q_2,q_3,q_4)$ be two ordered tuples
of distinct indices from $\{ 1\kdots d\}$.
Then there is $\sigma\in{\rm Gal}(G/L )$
such that
\[
\mbox{$\sigma (\ve_{p_i,p_j})=\ve_{q_i,q_j}$ for each distinct $i,j\in\{ 1,2,3,4\}$.}
\]
\end{lemma}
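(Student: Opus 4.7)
The plan is to reduce the claim to the observation that the Galois group of $G/L$ acts on the indices of the conjugates via a permutation that is compatible with both $\alpha$ and $\beta$ simultaneously, and then to invoke the transitivity hypothesis \eqref{7.10}.

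First I would set up the bookkeeping. For any $\sigma \in \operatorname{Gal}(G/L)$, the composition $\sigma \circ \sigma_i$ is again an $L$-isomorphism of $K$ into $G$, so there is a permutation $\pi = \pi_\sigma \in S_d$ with $\sigma \circ \sigma_i = \sigma_{\pi(i)}$ for $i = 1,\ldots,d$. Applied to $\alpha$ and $\beta$, this gives $\sigma(\alpha^{(i)}) = \alpha^{(\pi(i))}$ and $\sigma(\beta^{(i)}) = \beta^{(\pi(i))}$ simultaneously; the key point is that a single $\sigma$ induces the \emph{same} permutation of indices on the two conjugate systems. Since $\alpha^{(i)}, \alpha^{(j)}, \beta^{(i)}, \beta^{(j)} \in G$ and $\beta^{(i)} \neq \beta^{(j)}$ for $i \neq j$ (because $L(\beta) = K$), we can apply $\sigma$ directly to the fraction defining $\varepsilon_{ij}$ and obtain
\[
\sigma(\varepsilon_{ij}) \;=\; \frac{\alpha^{(\pi(i))} - \alpha^{(\pi(j))}}{\beta^{(\pi(i))} - \beta^{(\pi(j))}} \;=\; \varepsilon_{\pi(i),\pi(j)}
\]
for all distinct $i,j \in \{1,\ldots,d\}$.

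Thus, to prove the lemma, it suffices to exhibit $\sigma \in \operatorname{Gal}(G/L)$ such that the associated permutation $\pi_\sigma$ satisfies $\pi_\sigma(p_i) = q_i$ for $i = 1,2,3,4$. This is precisely what the hypothesis \eqref{7.10} guarantees. Indeed, if $d \geq 5$, then $K$ is four times transitive over $L$ by assumption, and the definition of four times transitivity provides exactly such a $\sigma$. If $d = 4$, then by assumption $\operatorname{Gal}(G/L) \cong S_4$, which acts as the full symmetric group on the four embeddings $\sigma_1,\ldots,\sigma_4$, hence every permutation of $\{1,2,3,4\}$ (in particular the one sending $p_i \mapsto q_i$) arises as some $\pi_\sigma$.

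There is essentially no obstacle: the lemma is a direct unpacking of how $\operatorname{Gal}(G/L)$ acts on the $\varepsilon_{ij}$, combined with the transitivity assumption built into \eqref{7.10}. The only subtlety worth noting is the consistency of the induced permutation across $\alpha$ and $\beta$, which holds because both $\alpha^{(i)}$ and $\beta^{(i)}$ are obtained from the \emph{same} embedding $\sigma_i$.
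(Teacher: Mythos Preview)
Your proof is correct and follows essentially the same approach as the paper. The paper's own proof is a terse three-line argument: it invokes \eqref{7.10} to get $\sigma$ with $\sigma(\alpha^{(p_i)})=\alpha^{(q_i)}$, remarks that the same $\sigma$ works for $\beta$, and declares the lemma immediate; you have simply unpacked the bookkeeping (the induced permutation $\pi_\sigma$ and why it is the same for both $\alpha$ and $\beta$) more explicitly.
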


\begin{proof}
By \eqref{7.10}, there is $\sigma\in{\rm Gal}(K/L)$ such that
$\sigma (\alpha^{(p_i)})=\alpha^{(q_i)}$ for $i=1,2,3,4$.
The same holds with $\beta$ instead of $\alpha$. This implies the lemma at once.
\end{proof}

Our next observation is that for any pair $(\alpha ,\beta )$
with \eqref{7.0},
\begin{equation}\label{7.14}
\frac{\ve_{ij}}{\ve_{ik}}\not= 1\ \ \mbox{for } i,j,k\in\{ 1\kdots d\},\
\mbox{with $i,j,k$ distinct.}
\end{equation}
Indeed, suppose there are distinct indices $i,j,k$ with $\ve_{ij}=\ve_{ik}$.
Then by Lemma \ref{L7.4} we have $\ve_{1j}=\ve_{12}$ for $j=3\kdots d$.
This implies that $\tau (\alpha )=\tau (\beta )$,
where $\tau (\cdot )$ is given by \eqref{6.5.taudef}.
Now Lemma \ref{L3} (ii) implies that $\alpha ,\beta$ are $A$-equivalent,
contrary to \eqref{7.0}.

\begin{lemma}\label{L7.3}
There is a finite set $\mathcal{E}$
such that for every pair $(\alpha ,\beta )$
with \eqref{7.0}, at least one of the following alternatives holds:
\\[0.2cm]
{\bf (i)} $\ve_{ij}/\ve_{ik}\in\mathcal{E}$ for each ordered triple
$(i,j,k)$ of distinct indices from $\{ 1\kdots d\}$;
\\
{\bf (ii)} $\ve_{ij}\ve_{kl}=\ve_{ik}\ve_{jl}$ for each ordered quadruple
$(i,j,k,l)$ of distinct indices from $\{ 1\kdots d\}$;
\\
{\bf (iii)} $d=4$, and $\ve_{ij}=-\ve_{kl}$ for each permutation
$(i,j,k,l)$ of $(1,2,3,4)$.
\end{lemma}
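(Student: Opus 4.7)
The plan is to feed the six-tuple \eqref{6.3} into Proposition \ref{P6.1}, combine the resulting trichotomy at each quadruple with Lemma \ref{L7.4} (Galois transitivity on ordered $4$-tuples of indices), and then run a case analysis. I take $\Gamma:=A_G^*$, which is finitely generated by Lemma \ref{L-finitely-generated}, and let $\mathcal{S}$ be the finite subset of $\Gamma$ supplied by Proposition \ref{P6.1}. I then define $\mathcal{E}$ to be $\bigcup_{\sigma \in \mathrm{Gal}(G/L)}\sigma(\mathcal{S})$, augmented with $-1$ and the primitive cube roots of unity (when they lie in $G$), together with finitely many further elements forced to appear by the analysis below. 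By construction, $\mathcal{E}$ is finite and stable under $\mathrm{Gal}(G/L)$.

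Suppose now that a pair $(\alpha,\beta)$ satisfies \eqref{7.0} and that conclusion (i) of the lemma fails, so some $\ve_{i_0 j_0}/\ve_{i_0 k_0}$ avoids $\mathcal{E}$. The six entries of \eqref{6.3} at any ordered quadruple are ratios $\ve_{ab}/\ve_{ac}$, and Lemma \ref{L7.4} shows that any two such ratios (for different triples) are conjugate under $\mathrm{Gal}(G/L)$. Since $\mathcal{E}$ is Galois-stable and contains $\mathcal{S}$, none of these entries lies in $\mathcal{S}$ for any quadruple. Consequently alternative (i) of Proposition \ref{P6.1} fails at every quadruple, so alternative (ii) or (iii) must hold there.

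The core of the argument is to show that this forces conclusion (ii) or (iii) of the lemma. I will use throughout that \eqref{7.14} forbids $\ve_{ab}=\ve_{ac}$, and that a direct computation gives $x_1x_2x_3=y_1y_2y_3$ for the entries of \eqref{6.3}. In alternative (ii) of Proposition \ref{P6.1}, a sign pattern containing a $-1$ forces $(x_1x_2x_3)^2=1$, hence some $x_i$ or $x_ix_j$ must equal $\pm 1$; a permutation with all $\eta_i=+1$ collapses by \eqref{7.14} to the single cyclic choice $(y_1,y_2,y_3)=(x_2,x_3,x_1)$, which yields $\ve_{ij}\ve_{kl}=\ve_{ik}\ve_{jl}=\ve_{il}\ve_{jk}$ at the quadruple. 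A parallel inspection handles alternative (iii) of Proposition \ref{P6.1}. Each surviving subcase either forces some $\ve_{ab}/\ve_{ac}$ into $\{-1,\rho,\rho^2\}\subseteq\mathcal{E}$, returning us to conclusion (i) (contradiction); or, after Galois propagation via Lemma \ref{L7.4}, yields the relation of conclusion (ii) at every quadruple; or -- only in the pattern $(y_1,y_2,y_3)=(x_2^{-1},x_3^{-1},x_1^{-1})$ with $x_1x_2x_3=-1$ -- yields $\ve_{ij}=-\ve_{kl}$ for complementary pairs. This last configuration is self-consistent only when $d=4$, since for $d\ge 5$, applying the same pattern to two quadruples sharing three indices and using four-times-transitivity of $K/L$ pins down $\ve$-values involving a fifth index in contradictory ways. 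The main obstacle is precisely this case analysis: keeping track of the sign/permutation patterns in Proposition \ref{P6.1}(ii) and the product/ratio patterns in \ref{P6.1}(iii), eliminating those that violate \eqref{7.14} or $x_1x_2x_3=y_1y_2y_3$, and verifying that only the three configurations of the lemma remain once Galois transitivity is used to globally propagate each surviving pattern across all quadruples of $\{1,\ldots,d\}$.
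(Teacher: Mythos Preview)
Your plan matches the paper's: feed the tuple \eqref{6.3} into Proposition~\ref{P6.1}, define $\mathcal{E}$ as the Galois orbit of $\mathcal{S}$ together with certain roots of unity, and then run a case analysis propagated by Lemma~\ref{L7.4}. The observation $x_1x_2x_3=y_1y_2y_3$ is correct and useful (the paper does not use it), and your collapse of the all-$+1$ permutations to the single cycle $(y_1,y_2,y_3)=(x_2,x_3,x_1)$ via \eqref{7.14} is exactly the mechanism that produces alternative~(ii) of the lemma.

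However, several of the remaining cases are glossed over in ways that do not survive scrutiny. First, the assertion that any sign pattern containing a $-1$ forces $(x_1x_2x_3)^2=1$ is false: a single $\eta_i=-1$ gives $x_i^2=1$, two give $(x_ix_j)^2=1$, and only three give $(x_1x_2x_3)^2=1$; the last of these does \emph{not} yield ``some $x_i$ or $x_ix_j=\pm 1$''. Second, in the all-$(-1)$ cyclic pattern the paper shows $\ve_{12}=\pm\ve_{34}$, and the $+$ sign (your $x_1x_2x_3=+1$) must be eliminated by substituting $\ve_{12}=\ve_{34}$, $\ve_{13}=\ve_{24}$, $\ve_{14}=\ve_{23}$ into \eqref{6.2} and observing that the two sides become negatives of one another while both are nonzero by \eqref{7.14}; you assert this outcome without argument. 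Third, your ``parallel inspection'' for Proposition~\ref{P6.1}(iii) hides the most delicate step: the conditions there are on four-term products like $\ve_{ik}\ve_{il}/(\ve_{jk}\ve_{kl})$, not on ratios $\ve_{ab}/\ve_{ac}$, and converting the former into the latter requires several applications of Lemma~\ref{L7.4} (swapping carefully chosen index pairs and dividing), yielding at best $(\ve_{ab}/\ve_{ac})^{18}=1$. Thus your $\mathcal{E}$ as described --- only $-1$ and primitive cube roots beyond the Galois orbit of $\mathcal{S}$ --- is too small; the paper takes all roots of unity of order at most $18$. The escape clause ``together with finitely many further elements forced to appear'' covers this formally, but a proof must exhibit those elements.
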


\begin{proof}
Pick a pair $(\alpha ,\beta )$ with \eqref{7.0}.
We apply Proposition \ref{P6.1} to \eqref{6.4}, with $\Gamma =A_G^*$
and with for $(x_1,x_2,x_3,y_1,y_2,y_3)$ the tuple \eqref{6.3}
with $(i,j,k,l)=(1,2,3,4)$, i.e.,
\begin{equation}\label{7.11}
\left(\frac{\ve_{13}}{\ve_{23}}, \frac{\ve_{14}}{\ve_{34}},
\frac{\ve_{12}}{\ve_{24}}, \frac{\ve_{12}}{\ve_{23}},
\frac{\ve_{13}}{\ve_{34}}, \frac{\ve_{14}}{\ve_{24}}\right).
\end{equation}

Let $\mathcal{S}$ be the finite set from Proposition \ref{6.1}.
Let $\mathcal{E}$ consist of all conjugates over $L$
of the elements from $\mathcal{S}$,
as well as all roots of unity of order up to $18$.

First suppose that alternative (i) of Proposition \ref{P6.1} holds.
Then there are distinct $p,q,r\in\{ 1\kdots 4\}$,
such that $\ve_{pq}/\ve_{pr}\in\mathcal{S}$.
By Lemma \ref{L7.4} we then have
$\ve_{ij}/\ve_{ik}\in\mathcal{E}$ for each triple
$(i,j,k)$ of distinct indices from $\{ 1\kdots d\}$. This
is alternative (i) of our Lemma.

Next, suppose that alternative (ii) of Proposition \ref{P6.1} holds.
Then
\[
\frac{\ve_{13}}{\ve_{23}}\in
\left\{
\frac{\ve_{12}}{\ve_{23}}, \frac{\ve_{13}}{\ve_{34}},
\frac{\ve_{14}}{\ve_{24}}, \frac{\ve_{23}}{\ve_{12}},
\frac{\ve_{34}}{\ve_{13}}, \frac{\ve_{24}}{\ve_{14}}\right\}.
\]
By \eqref{7.14}, $\ve_{13}/\ve_{23}$ cannot be equal to
$\ve_{12}/\ve_{23}$ or $\ve_{13}/\ve_{34}$.
If
$\ve_{13}/\ve_{23}=\ve_{14}/\ve_{24}$, then $\ve_{13}\ve_{24}=\ve_{14}\ve_{23}$.
Then by Lemma \ref{L7.4}
$\ve_{ij}\ve_{kl}=\ve_{ik}\ve_{jl}$ for any four distinct
indices $i,j,k,l\in\{ 1\kdots d\}$.
This is alternative (ii) of our Lemma.

Assume that $\ve_{13}/\ve_{23}=\ve_{23}/\ve_{12}$; then
$\ve_{23}^2=\ve_{12}\ve_{13}$. By Lemma \ref{L7.4},
we have also $\ve_{13}^2=\ve_{12}\ve_{23}$. Hence $(\ve_{23}/\ve_{13})^3=1$.
Again by Lemma \ref{L7.4}, and the fact that $\mathcal{E}$
contains all cube roots of unity, this implies alternative (i) of our Lemma.

Next, assume that $\ve_{13}/\ve_{23}=\ve_{34}/\ve_{13}$. Then
$\ve_{13}^2=\ve_{23}\ve_{34}$. Then by Lemma \ref{L7.4},
$\ve_{23}^2=\ve_{13}\ve_{34}$. This implies again $(\ve_{13}/\ve_{23})^3=1$
and then alternative (i) of our Lemma.

Finally, assume that $\ve_{13}/\ve_{23}=\ve_{24}/\ve_{14}$.
Then $\ve_{13}\ve_{14}=\ve_{23}\ve_{24}$. By Lemma \ref{L7.4},
the same holds after interchanging the indices $2$ and $3$, and also after
interchanging $2$ and $4$; that is,
we have also $\ve_{12}\ve_{14}=\ve_{23}\ve_{34}$ and
$\ve_{13}\ve_{12}=\ve_{34}\ve_{24}$. Multiplying together
the last two identities and dividing by the first, we obtain
$\ve_{12}^2=\ve_{34}^2$,  or $\ve_{12}=\pm\ve_{34}$.
First suppose that $\ve_{12}=\ve_{34}$. Then by Lemma \ref{L7.4},
we have also $\ve_{13}=\ve_{24}$, $\ve_{14}=\ve_{23}$. Substituting this
into \eqref{6.2} with $(i,j,k,l)=(1,2,3,4)$,
we obtain
\[
\left(\frac{\ve_{13}}{\ve_{14}}-1\right)
\left(\frac{\ve_{14}}{\ve_{12}}-1\right)
\left(\frac{\ve_{12}}{\ve_{13}}-1\right)
\, =\,
\left(\frac{\ve_{12}}{\ve_{14}}-1\right)
\left(\frac{\ve_{13}}{\ve_{12}}-1\right)
\left(\frac{\ve_{14}}{\ve_{13}}-1\right).
\]
But this is impossible, since by \eqref{7.14},
both sides are non-zero, and since the left-hand side is the opposite
of the right-hand side.
Hence $\ve_{12}= -\ve_{34}$ and then by Lemma \ref{L7.4}, also
$\ve_{13}=-\ve_{24}$, $\ve_{14}=-\ve_{23}$. If $d\geq 5$, then again
by Lemma \ref{L7.4}, $\ve_{12}=-\ve_{35}$, implying $\ve_{34}=\ve_{35}$,
which is impossible by \eqref{7.14}. Hence $d=4$.
We conclude that alternative (iii) of our Lemma holds.

Finally, suppose that (iii) of Proposition \ref{P6.1} holds.
Then if $(x_1\kdots y_3)$ is the tuple \eqref{7.11} we have that
at least one of the numbers
$x_ix_j , x_i/x_j ,y_iy_j ,\\y_i/y_j$ ($1\leq i<j\leq 3$) is $-1$
or a primitive cube root of unity.
All these possibilities can be combined
by saying that there is a permutation $(i,j,k,l)$ of $(1,2,3,4)$
such that $\ve_{ik}\ve_{il}/\ve_{jk}\ve_{kl}$
or $\ve_{ik}\ve_{kl}/\ve_{il}\ve_{jk}$ is $-1$ or a primitive cube root of unity.
By Lemma \ref{L7.4}, we may replace the indices $i,j,k,l$ by $1\kdots 4$,
respectively. Then
$(\ve_{13}\ve_{14}/\ve_{23}\ve_{34})^6=1$
or $(\ve_{13}\ve_{34}/\ve_{14}\ve_{23})^6=1$.

First suppose that $(\ve_{13}\ve_{14}/\ve_{23}\ve_{34})^6=1$.
Applying again Lemma \ref{L7.4}, the same holds if we interchange
the indices $2$ and $4$, i.e.,
$(\ve_{13}\ve_{12}/\ve_{34}\ve_{23})^6=1$.
As a consequence, $(\ve_{12}/\ve_{14})^6=1$. But then another application
of Lemma \ref{L7.4} implies that $\ve_{ij}/\ve_{ik}\in\mathcal{E}$
for any three distinct indices $i,j,k$, i.e., alternative (i) of our Lemma.

Finally, suppose that
$(\ve_{13}\ve_{34}/\ve_{14}\ve_{23})^6=1$.
By Lemma \ref{L7.4}, interchanging the indices $1$ and $3$,
we get also $(\ve_{13}\ve_{14}/\ve_{34}\ve_{12})^6=1$.
Multiplying the two identities gives $(\ve_{13}^2/\ve_{12}\ve_{23})^6=1$.
Again by Lemma \ref{L7.4}, interchanging the indices $2$ and $3$,
we get $(\ve_{12}^2/\ve_{13}\ve_{23})^6=1$. Then on dividing the last
two identities, we get $(\ve_{13}/\ve_{12})^{18}=1$. A final application
of Lemma \ref{L7.4} leads to $\ve_{ij}/\ve_{ik}\in\mathcal{E}$
for any three distinct indices $i,j,k$, which is alternative (i) of our Lemma.
This completes our proof.
\end{proof}

\begin{proof}[Proof of Theorem \ref{T2.2}, (ii), (iii)]
Consider the two times monogenic $A$-orders
$\OO =A[\alpha ]=A[\beta ]$ in $K$, where $\alpha ,\beta$ satisfy
\eqref{7.0}.

First consider those $A$-orders $\OO$ such that the pair
$(\alpha ,\beta )$ satisfies alternative (i) of Lemma \ref{L7.3}.
Then by \eqref{6.1}, \eqref{7.14},
there is a finite set $\mathcal{F}$ independent
of $\alpha ,\beta$ such that
$\frac{\beta^{(i)}-\beta^{(j)}}{\beta^{(i)}-\beta^{(k)}}\in\mathcal{F}$
for any three distinct $i,j,k\in\{ 1\kdots d\}$.
Hence for the tuple $\tau (\beta )$ defined by \eqref{6.5.taudef}
there are only finitely many possibilities. Then Lemma \ref{L3}
implies that for the $A$-orders $\OO$ under consideration,
the corresponding $\beta$ lie in only finitely many $L$-equivalence classes.
Subsequently, by Lemma \ref{L5}
these $\beta$ lie in only finitely many $A$-equivalence
classes, and
thus there are only finitely many possibilities for the $A$-order $\OO$.

Next, we consider those $A$-orders $\OO =A[\alpha ]=A[\beta ]$ such that
$(\alpha ,\beta )$ satisfies alternative (ii) of Lemma \ref{L7.3}.
Take such a pair $(\alpha ,\beta )$.
By assumption, $\ve_{ij}\ve_{kl}=\ve_{ik}\ve_{jl}$, hence,
in view of \eqref{6.6.1},
\[
\frac{(\beta^{(i)}-\beta^{(j)})(\beta^{(k)}-\beta^{(l)})}
{(\beta^{(i)}-\beta^{(k)})(\beta^{(j)}-\beta^{(l)})}\, =\,
\frac{(\alpha^{(i)}-\alpha^{(j)})(\alpha^{(k)}-\alpha^{(l)})}
{(\alpha^{(i)}-\alpha^{(k)})(\alpha^{(j)}-\alpha^{(l)})}
\]
for every quadruple $(i,j,k,l)$
of distinct indices from $\{ 1\kdots d\}$. In other words,
the cross ratio of any four numbers among the $\alpha^{(i)}$'s
is equal to the cross ratio of the corresponding numbers among the
$\beta^{(i)}$'s. Then by elementary projective geometry,
there is a matrix
$C=\left(\begin{smallmatrix}a_1&a_2\\a_3&a_4\end{smallmatrix}\right)
\in\GL (2,G)$
such that
\[
\beta^{(i)}=\frac{a_1\alpha^{(i)}+a_2}{a_3\alpha^{(i)}+a_4}\ \
\mbox{for } i=1\kdots d.
\]
If we assume that the first non-zero entry among $a_1\kdots a_4$ is $1$,
the matrix $C$ is uniquely determined. Any $\sigma\in{\rm Gal}(G/L)$
permutes the sequences $\alpha^{(1)}\kdots\alpha^{(d)}$ and
$\beta^{(1)}\kdots\beta^{(d)}$ in the same manner,
hence the above relation holds with $\sigma (C)$ instead of $C$; so
$\sigma (C) =C$. It follows that $C\in\GL (2,L)$. We observe that
$a_3\not= 0$. For otherwise, $\alpha ,\beta$ are $L$-equivalent
and then $A$-equivalent by Lemma \ref{L2b},
contrary to \eqref{7.0}.
This shows that
$\OO =A[\alpha ]=A[\beta ]$ is of type I.
Notice that if $A$ is a principal ideal domain, then by taking a suitable scalar multiple of $C$
we can arrange that
$a_1\kdots a_4\in A$ and $(a_1\kdots a_4)=(1)$, and thus,
$C\in\GL (2,A)$ by Lemma \ref{L7.1}.

Finally, we consider those $A$-orders $\OO =A[\alpha ]=A[\beta ]$ such that
$(\alpha ,\beta )$ satisfies alternative (iii) of Lemma \ref{L7.3}; then $d=4$.
Take such a pair $(\alpha ,\beta )$.
By assumption, $\ve_{ij}=-\ve_{kl}$ for every permutation $(i,j,k,l)$ of $(1,2,3,4)$.
Define
\begin{eqnarray*}
&&u_0:=\ve_{12}\ve_{13}\ve_{14},
\\
&&\alpha_0:=\half u_0 (\ve_{12}^{-1}+\ve_{13}^{-1}+\ve_{14}^{-1}),\ \
\beta_0:=\half (\ve_{12}+\ve_{13}+\ve_{14}).
\end{eqnarray*}
By \eqref{7.10}, the group
${\rm Gal}(G/L)$ acts on $\{ \alpha^{(1)}\kdots\alpha^{(4)}\}$ as the full
permutation group. Say that $\sigma (\alpha^{(i)})=\alpha^{(\sigma (i))}$
for $\sigma\in{\rm Gal}(G/L)$, $i=1,2,3,4$.
Then $\sigma (\beta^{(i)})=\beta^{(\sigma (i))}$ for $i=1,2,3,4$
and thus, $\sigma (\ve_{ij})=\ve_{\sigma (i),\sigma (j)}$ for $1\leq i,j\leq 4$, $i\not= j$.
Further,
${\rm Gal}(G/K)$ consists of those $L$-automorphisms that permute
$\alpha^{(2)},\alpha^{(3)},\alpha^{(4)}$ and leave $\alpha =\alpha^{(1)}$ unchanged.
Hence $u_0,\alpha_0,\beta_0$ are invariant
under ${\rm Gal}(G/K)$ and so belong to $K$. But $u_0$ is in fact invariant under
${\rm Gal}(G/L)$, hence belongs to $L$. Notice that
\begin{equation}\label{7.12}
\beta_0^2 = \alpha_0 +r_0,\ \ \alpha_0^2 =u_0\beta_0 +s_0\ \ \mbox{with } r_0,s_0\in L.
\end{equation}
Indeed, \eqref{7.12} holds with
\[
r_0 := \quarter (\ve_{12}^2+\ve_{13}^2+\ve_{14}^2),\ \
s_0 := \quarter u_0^2(\ve_{12}^{-2}+\ve_{13}^{-2}+\ve_{14}^{-2}),
\]
and these $r_0,s_0$ are invariant under ${\rm Gal}(G/L)$.

A straightforward computation gives
\[
\alpha_0^{(2)}=\half u_0(\ve_{21}^{-1}+\ve_{23}^{-1}+\ve_{24}^{-1})=
\half u_0(\ve_{12}^{-1}-\ve_{13}^{-1}-\ve_{14}^{-1})
\]
and similarly, $\beta_0^{(2)}=\frac{1}{2}(\ve_{12}-\ve_{13}-\ve_{14})$. Hence
\[
\frac{\alpha_0^{(1)}-\alpha_0^{(2)}}{\beta_0^{(1)}-\beta_0^{(2)}}=
\frac{-u_0(\ve_{13}^{-1}+\ve_{14}^{-1})}{-(\ve_{13}+\ve_{14})}=u_0\ve_{13}^{-1}\ve_{14}^{-1}=\ve_{12}.
\]
By taking conjugates over $L$ we get
\begin{equation}\label{7.13}
\frac{\alpha_0^{(i)}-\alpha_0^{(j)}}{\beta_0^{(i)}-\beta_0^{(j)}}=\ve_{ij}\ \
\mbox{for } 1\leq i,j\leq 4,\, i\not= j.
\end{equation}
As a consequence, the four conjugates of $\alpha_0$ over $L$ are distinct, and also the four conjugates
of $\beta_0$ over $L$ are all distinct. Hence $L(\alpha_0)=L(\beta_0)=K$. Notice that in the deduction of
\eqref{6.1}, no properties of $\alpha ,\beta$ were used other than that $L(\alpha )=L(\beta )=K$.
That is, the same reasoning applies if we replace $\alpha ,\beta$ by $\alpha_0 ,\beta_0$.
But then, applying \eqref{6.1} both with $(\alpha ,\beta )$ and with $(\alpha_0,\beta_0)$,
using \eqref{7.13}, \eqref{7.14},
we obtain
\[
\frac{\beta^{(i)}-\beta^{(j)}}{\beta^{(i)}-\beta^{(k)}}=
\frac{\beta_0^{(i)}-\beta_0^{(j)}}{\beta_0^{(i)}-\beta_0^{(k)}}
\ \ (1\leq i,j,k\leq d,\  i,j,k\ \mbox{distinct}).
\]
By multiplying this identity with
$\ve_{ij}/\ve_{ik}$ we obtain
\[
\frac{\alpha^{(i)}-\alpha^{(j)}}{\alpha^{(i)}-\alpha^{(k)}}=
\frac{\alpha_0^{(i)}-\alpha_0^{(j)}}{\alpha_0^{(i)}-\alpha_0^{(k)}}
\ \ (1\leq i,j,k\leq d,\  i,j,k\ \mbox{distinct}).
\]
This shows that $\tau (\beta )=\tau (\beta_0)$, $\tau (\alpha )=\tau (\alpha_0)$,
where $\tau (\cdot )$ is defined by \eqref{6.5.taudef}. By Lemma \ref{L3}, (i), there
are $\lambda ,\lambda '\in L^*$, $\mu ,\mu '\in L$, such that
\[
\alpha =\lambda \alpha_0 +\mu ,\ \ \beta =\lambda '\beta_0 +\mu '.
\]
By combining this with \eqref{7.12}, we obtain
\[
\beta =a_0\alpha^2+a_1\alpha +a_2,\ \ \alpha =b_0\beta^2+b_1\beta+b_2
\]
with $a_0,a_1,a_2,b_0,b_1,b_2\in L$ and $a_0b_0\not= 0$.
But in fact, we have $a_0\kdots b_2\in A$
since by assumption, $A[\alpha ]=A[\beta ]$. This shows that $\OO =A[\alpha ]=A[\beta ]$
is an $A$-order of type II. This completes the proof of Theorem \ref{T2.2}.
\end{proof}

\end{document}